\newcommand{\xMapsto}[2][]{\ext@arrow 0599{\Mapstofill@}{#1}{#2}}
\def\Mapstofill@{\arrowfill@{\Mapstochar\Relbar}\Relbar\Rightarrow}
\DeclareMathOperator*{\esssup}{ess\,sup}
\DeclareMathOperator*{\essinf}{ess\,inf}
\newtheorem{thrm}{Theorem}%[section]
\newtheorem{cor}{Corollary}
\newtheorem{hyp}{H\hspace{-1.5mm}}
\newtheorem{subhyp}{\hspace{2.5mm}}[hyp]
\newtheorem{cond}{C\hspace{-1.5mm}}
\newtheorem{lem}{Lemma}%[section]
\newtheorem{dfn}{Definition}%[section]
\newtheorem{remark}{Remark}%[section]
\newcommand{\mybigint}{\scalebox{1.2}[3]{$\displaystyle \int$}}
\newtheorem*{notation}{Notation}
\newcommand{\id}{\mbox{id}}
\newlist{statistics}{enumerate}{1}
\setlist[statistics]{%
  label=\Roman*),
  resume
}
\def\T{\hat{T}}
\newcommand{\bea}[1]{\begin{eqnarray}\label{#1}}
\newcommand{\eea}{\end{eqnarray}}
\title[Compound Poisson distributions for random dynamical systems]{Compound Poisson distributions for random dynamical systems using  probabilistic approximations}
\begin{document}
 \author{Lucas Amorim}
 \thanks{L Amorim, University of Porto, Mathematics Department, Center for Mathematics, Porto, Portugal; Universit\'e de Toulon, CPT, Marseille, France.
 E-mail: {\tt {lucas.amorim.vb@gmail.com}}}
\author{Nicolai Haydn}
\thanks{N Haydn, Mathematics Department, USC,
Los Angeles, 90089-2532. E-mail: {\tt {nhaydn@usc.edu}}}
 \author{Sandro Vaienti}
 \thanks{S Vaienti, Aix Marseille Universit\'e, Universit\'e de Toulon, CNRS, CPT, UMR 7332, Marseille, France.
 E-mail: {\tt {vaienti@cpt.univ-mrs.fr}}}

\date{\today}
%\maketitle

%\begin{center}
%\author{N. Haydn\footnote{Mathematics Department, USC,
%Los Angeles, 90089-1113. E-mail: $<$nhaydn@math.usc.edu$>$.},
%S.\ Vaienti\footnote{Aix Marseille Universit\'e, Universit\'e de Toulon, CNRS, CPT, UMR 7332, Marseille, France. E-mail: {\tt \email{vaienti@cpt.univ-mrs.fr}}.}}

%\end{center}

%%%%%%%%%%%%%%%%%%%%%%%%%%%%%%%%%%%%%%%%%%%%
%%%%%%%%%%%%%%%%%%%%%%%%%%%%%%%%%%%%%%%%%%%%
%%%%%%%%%%%%%%%%%%        ABSTRACT
\begin{abstract}
We obtain quenched hitting distributions to be compound Poissonian for a certain class of random dynamical systems. The theory is general and designed to accommodate non-uniformly expanding behavior and targets that do not overlap much with the region where uniformity breaks. Based on annealed and quenched polynomial decay of correlations, our quenched result adopts annealed Kac-type time-normalization and finds limits to be noise-independent. The technique involves a probabilistic block-approximation where the quenched hit-counting function up to annealed Kac-normalized time is split into equally sized blocks which are mimicked by an independency of random variables distributed just like each of them. The theory is made operational due to a result that allows certain hitting quantities to be recovered from return quantities. Our application is to a class of random piecewise expanding one-dimensional systems, casting new light on the well-known deterministic dichotomy between periodic and aperiodic points, their usual extremal index formula $\operatorname{EI}= 1 - 1/ JT^p(x_0)$, and recovering the Polya-Aeppli case for general Bernoulli-driven systems, but distinct behavior otherwise. Future and on-going investigations aim to produce and accommodate examples of bonafide non-uniformly expanding random systems and targets approaching their neutral points.
\end{abstract}

\maketitle

\newpage
%\tableofcontents
\tableofcontents

\newpage

%%%%%%%%%%%%%%%%%%%%%%%%%%%%%%%%%%%%%%%%%%%%
%%%%%%%%%%%%%%%%%%%%%%%%%%%%%%%%%%%%%%%%%%%%
%%%%%%%%%%%%%%%%%%             SECTION: INTRODUCTION
%%%%%%%%%%%%%%%%%%%%%%%%%%%%%%%%%%%%%%%%%%%%
\section{Introduction}

Limiting hitting distributions and hitting time statistics of dynamical systems, together with their return counterparts, and the related quantitative recurrence questions, have a long history of investigation. This investigation remains active and in the last few years has advanced in many different directions, such as more elaborate targets, non-uniformly hyperbolic behavior, random systems, and connections to extreme behavior, both in theory and real-life applications.  

In the deterministic case, the canonical picture is presented for uniformly hyperbolic or expanding systems with singleton targets and Kac-type normalization, where a dichotomy occurs: either the target consists of a non-periodic generic point and the limit behavior is pure Poisson (see e.g., \cite{doeblin1940remarques}, \cite{pitskel1991poisson}, \cite{hirata1993poisson}, 
\cite{collet1996some},
\cite{galves1997inequalities}, \cite{denker2004poisson}), or the target consists of a periodic point and the limit behavior is Polya-Aeppli (see e.g., \cite{hirata1993poisson}, \cite{haydn2009compound},
\cite{freitas2013compound},
\cite{kifer2014poisson}, \cite{carvalho2015extremal}). The so-called extremal index (EI) can summarize both cases: in the pure Poisson case $EI=1$, whereas in the Polya-Aeppli case $EI = 1 - 1 / JT^p(x_0) \in (0,1)$. 

A direction of generalization found in the literature is to consider different types of targets, not limited to singletons. In general, this situation exhibits limiting hitting distributions in the compound Poisson class, which includes, but isn't limited to, the pure Poisson and Polya-Aeppli cases. This can be seen most simply in the case of finite targets with pieces of orbits (\cite{azevedo2016clustering}, \cite{holland2012extreme} and \cite{abadi2020dynamical}), but more complicated situations were also studied, such as countable sets (\cite{azevedo2017extreme}), submanifolds (\cite{faranda2018extreme}, \cite{carney2021extremes}) and fractal sets (\cite{freitas2020rare}, \cite{mantica2016extreme}, \cite{freitas2021rare}). More abstract approaches to such general target sets were developed in \cite{freitas2020enriched} and \cite{haydn2020limiting}.

Another main direction of generalization is to handle non-uniformly expanding behavior. Many contributions have been given in the literature, such as \cite{holland2012extreme}, \cite{freitas2016rare}, \cite{freitas2013compound}, \cite{freitas2020enriched} and \cite{haydn2020limiting}. We emphasize that the relation between the target position and the position of the neutral fixed points of such maps plays a major role, because, when they intersect, strong dependence/recurrence around the target occurs, requiring special normalization as to find non-trivial limiting distributions (see, e.g, \cite{freitas2016rare}).

Finally, the theory has also been generalized to the realm of random dynamical systems, see, for example \cite{marie2009recurrence}, \cite{rousseau2014exponential}, \cite{aytacc2015laws}, \cite{haydn2016return}, \cite{freitas2020point}, \cite{crimmins2022quenched} and, very recently, \cite{atnip2023compound}. Compound Poissonian quenched hitting distributions were also shown in \cite{atnip2023compound} with the spectral method. Despite their applications being similar to ours, the main differences are that their theory needs exponential decay of correlations, and their time-normalization is quenched. Quenched time-normalization usually stands with a merely ergodic driving system, which is the case there. However, from an applied point of view, this can be an impractical restriction: quenched time-normalization says that the experimenter will not pre-determine (deterministically) how long to watch the experiment, but will get informed about the complete noise realization (at least until its remote past) and use it to determine the desired watch time.

We now discuss the contributions of this work and some of its features.

We show that quenched hitting distributions are compound Poissonian for a certain class of random dynamical systems, using a probabilistic block-approximation approach and generalizing the deterministic theory developed in \cite{haydn2020limiting} after the approach introduced in \cite{chazottes2013poisson}. This is the content of theorem \ref{thm:main}, our main result.

The probabilistic block-approximation (theorem \ref{thm:approx}) splits the quenched hit-counting function up to annealed-Kac-normalized into equally sized blocks which are mimicked by an independency of random variables distributed just like each of them. The said approximation goes for any given noise realization $\omega$ and $\omega$-dependent leading terms and errors appear. Both of them are tamed by an almost sure convergence statement (lemma \ref{lem:asconv}) based on a Borel-Cantelli argument, which allows for the quenched result to hold.

The limiting compound Poisson distribution, revealed by the asymptotics of the aforementioned leading terms, and its underlying multiplicity distribution are characterized by a set of hitting quantities ($\lambda_\ell$'s), which are transparently expressed in terms of the asymptotics of the dynamics, its invariant measure and the target. Hitting quantities are introduced in section \ref{sec:usestatquant}

The theory is made operational due to theorem \ref{thm:lambdaalpha}, which allows for the latter hitting quantities to be recovered from a set of return quantities ($\alpha_\ell$'s). Return quantities are introduced in section \ref{sec:usestatquant}. The advantage here is that the return quantities are easier to calculate in concrete examples.

Moreover, our theory is based on a mild set of hypotheses, introduced in section \ref{sec:wrksetup}, designed to accommodate non-uniformly expanding behavior (with polynomial decay) and general targets that do not overlap much with the region where uniformity breaks and that presents well-defined return quantities. 

Our assumptions on the quasi-invariant family of measures do not consider their absolute continuity with respect to the Lebesgue measure, but regularity in a dimensional sense. 

A drawback of our approach is that results are just along sufficiently fast shrinking neighborhoods of the target set. This is intimately connected with the use of a Borel-Cantelli argument well-adapted to the annealed time normalization and annealed decay of correlations.

We conclude with an application to a class of random piecewise expanding one-dimen-sional systems, casting new light on the well-known deterministic dichotomy between periodic and aperiodic points, their typical extremal index formula $\operatorname{EI}= 1 - 1/ JT^p(\zeta)$, and recovering the geometric case for general Bernoulli-driven systems, but distinct behavior otherwise. See section \ref{sec:ex}.

%%%%%%%%%%%%%%%%%%%%%%%%%%%%%%%%%%%%%%%%%%%%%%%%%%%%%
%%%%%%%%%%%%%%%%%%%%%%%%%%%%%%%%%%%%%%%%%%%%%%%%%%%%%
%%%%%%%%%%%%%%%         Background
\section{Assumptions and main results} \label{sec:assumptionsresults}
\subsection{General setup}\label{sec:gensetup}
Consider $M$ and $\Omega$ complete separable metric spaces and $(\theta,\mathbb{P})$ a measurably invertible ergodic system on $\Omega$.

Consider maps $T_\omega: M \to M$ ($\omega \in \Omega$) which combine to make the a measurable skew product $S: \Omega \times M \rightarrow \Omega \times M$, $(\omega,x) \mapsto (\theta \omega, T_\omega x)$. As usual, for higher-order iterates we denote 
$S^n(\omega,x)=(\theta^n\omega, T_\omega^n(x))$ where $T_\omega^n=T_{\theta^{n-1}\omega}\circ \cdots \circ
T_{\theta\omega}\circ T_\omega$ $(n \ge 1)$.

For $E \in \mathscr{B}_\Omega \times \mathscr{B}_M$ and $\omega \in \Omega$, write $E(\omega) = \{ x \in M : (\omega, x) \in E\}$. Denote
\begin{equation*}
    \mathcal{P}^{\mathbb{P}}(\Omega \times M) = \{ \hat{\mu} \in \mathcal{P}(\Omega \times M) : \hat{\mu}, {\pi_{\Omega}}_* \hat{\mu} = \mathbb{P}  \},
\end{equation*}
\begin{equation*}
    \mathcal{P}_S^{\mathbb{P}}(\Omega \times M) = \{ \hat{\mu} \in \mathcal{P}(\Omega \times M) : S_*\hat{\mu} = \hat{\mu}, {\pi_{\Omega}}_* \hat{\mu} = \mathbb{P}  \},
\end{equation*}
and 
\begin{equation*}
    \mathcal{RP}^{(\mathbb{P})}(M) {=} \hspace{-0.5mm}\left\{ \hspace{-1mm}\substack{\displaystyle \mu: \omega \in \Omega \stackrel{\mathbb{P}\text{-a.s.}}{\mapsto} \mu_\omega \in \mathcal{P}(M) \text{ so that:} \\ \displaystyle \text{ }\omega \in \Omega \stackrel{\mathbb{P}\text{-a.s.}}{\mapsto} \mu_\omega(E(\omega)) \in [0,1] \text{ is }(\mathscr{B}_\Omega,\mathscr{B}_{[0,1]})\text{-measurable, }\forall E \in \mathscr{B}_{\Omega} \times \mathscr{B}_M} \hspace{-1mm}\right\},
\end{equation*}
\begin{equation*}
    \mathcal{RP}^{(\mathbb{P})}_{T}(M) {=} \hspace{-0.5mm} \left\{ \hspace{-1mm}\substack{\displaystyle \mu: \omega \in \Omega \stackrel{\mathbb{P}\text{-a.s.}}{\mapsto} \mu_\omega \in \mathcal{P}(M) \text{ so that:} \\ \displaystyle \text{ }\omega \in \Omega \stackrel{\mathbb{P}\text{-a.s.}}{\mapsto} \mu_\omega(E(\omega)) \in [0,1] \text{ is }(\mathscr{B}_\Omega,\mathscr{B}_{[0,1]})\text{-measurable, }\forall E \in \mathscr{B}_{\Omega} \times \mathscr{B}_M\\ \displaystyle \text{ }\mathbb{P}\text{-a.s., } \forall n \ge 0: {T_\omega^n}_* \mu_\omega = \mu_{\theta^n \omega} } \hspace{-1mm} \right\}.
\end{equation*}

\begin{notation}
Elements in the latter two sets will be written as $\mu = (\mu_\omega)_{\omega}$, where the outer `$\omega$' subscript (instead of `$\omega \in \Omega$') is to identify that the given family if defined $\mathbb{P}$-a.s.. The underlying full measure subset $\Omega_0$ can be assumed to be forward and backward $\theta$-invariant (otherwise we substitute it by $\bigcap_{n \in \mathbb{ Z}} \theta^{n} \Omega_0$).
\end{notation}

Any $\hat{\mu}$ in $\mathcal{P}^{\mathbb{P}}(\Omega \times M)$ (in $\mathcal{P}_S^{\mathbb{P}}(\Omega \times M)$) rewrites (disintegrates) as 
\begin{equation*}\label{eq:disintegration}
    \hat{\mu}(E) = \int_\Omega \mu_\omega(E(\omega)) d\mathbb{P}(\omega),
\end{equation*}where $(\mu_\omega)_\omega$ is in $\mathcal{RP}^{(\mathbb{P})}(M)$ (in $\mathcal{RP}^{(\mathbb{P})}_{T}(M)$). Conversely, given $(\mu_\omega)_\omega$ in $\mathcal{RP}^{(\mathbb{P})}(M)$ (in $\mathcal{RP}^{(\mathbb{P})}_{T}(M)$), equation (\ref{eq:disintegration}) defines $\hat\mu$ in $\mathcal{P}^{\mathbb{P}}(\Omega \times M)$ (in $\mathcal{P}_S^{\mathbb{P}}(\Omega \times M)$). See \cite{crauel2002random} (prop. 3.3) and \cite{arnold1995random} (sec. 1.4).

Now, consider a given $\hat\mu = d\mu_\omega d\mathbb{P}(\omega) \in \mathcal{P}_S^{\mathbb{P}}(\Omega \times M)$, with the associated $(\mu_\omega)_{\omega } \in \mathcal{RP}^{(\mathbb{P})}_{T}(M)$. Define the marginal measure  $ \check{\mu} = {\pi_M}_* \hat\mu = \int_\Omega\mu_\omega\,d\mathbb{P}(\omega)\in \mathcal{P}(M)$. 

Finally, consider $\Gamma \in \mathscr{B}_\Omega \times \mathscr{B}_M$ so that, $\mathbb{P}$-a.s, $\Gamma(\omega)$ is compact and so that  $\mu_\omega(\Gamma(\omega))=0$. The set $\Gamma$ is the so-called random target. Denote $\Gamma_\rho(\omega)=B_\rho(\Gamma(\omega))$ ($\rho > 0$) and the corresponding $\omega$-collection by $\Gamma_\rho \in \mathscr{B}_\Omega \times \mathscr{B}_M$.

The objects above comprise what we call a `system', denoted by $(\theta, \mathbb{P}, T_\omega, \mu_\omega, \Gamma)$.

\subsection{Preliminary definitions}
We now define some working objects. 

Let $U \in \mathscr{B}_\Omega \times \mathscr{B}_M$ be so that $\mu_\omega(U(\omega))>0$, $\mathbb{P}$-a.s.. We again can consider that $\mu_{\theta^n \omega}(U(\theta^n \omega))>0$, for all $n \in \mathbb{Z}$, $\mathbb{P}$-a.s.. 

\begin{dfn}The \textbf{first hitting time} of $(\theta,\mathbb{P},T_\omega,\mu_\omega,U)$ is the family of functions 
\begin{equation*}
    \begin{tabular}{c c c l}
       $r_U^{\omega,1}:$  & $M$ & $\rightarrow$ & $\mathbb{N}_{\ge 1} \cup \{\infty\}$  \\
         & $x$ & $\mapsto$ & $\inf\{ i \in \mathbb{N}_{\ge 1} : T^i_\omega (x) \in U(\theta^i \omega) \}$
    \end{tabular}.
\end{equation*}The associated \textbf{higher-order hitting times} are given, for $\ell \ge 2$, by the family of functions
\begin{equation*}
        \begin{tabular}{r c c c l}
       $r_U^{\omega,\ell}$ & $:$  & $M$ & $\rightarrow$ & $\mathbb{N}_{\ge \ell} \cup \{\infty\}$  \\
         & & $x$ & $\mapsto$ & $r_U^{\omega,\ell}(x)=r_U^{\omega,\ell-1}(x)+r_U^{\omega'}(T_{\omega}^{r_U^{\omega,\ell-1}}(x))$
    \end{tabular},
\end{equation*}where $\omega'=\theta^{r_U^{\omega,\ell-1}(x)}\omega$.
\end{dfn}

\begin{dfn}\label{def:hitcount}
    The \textbf{hit counting function} of $(\theta,\mathbb{P},T_\omega,\mu_\omega,U)$ with noise $\omega \in \Omega$ and up time $L \ge 1$ is given by
    \begin{equation*}
    \begin{tabular}{r @{} c @{} c @{} c @{} l}
       $Z_{*U}^{\omega,L}$ & $:$  & $M$ & $\rightarrow$ & $\mathbb{N}_{\ge 0}$  \\
         & & $x$ & $\mapsto$ & $\displaystyle \sum_{i=1}^L \mathbbm{1}_{U(\theta^i \omega)} \circ T_\omega^i(x)$
    \end{tabular}, \hspace{3mm}
    \begin{tabular}{r @{} c @{} c @{} c @{} l}
       $\text{ }Z_{U}^{\omega,L}$ & $:$  & $M$ & $\rightarrow$ & $\mathbb{N}_{\ge 0}$  \\
        & & $x$ & $\mapsto$ & $\displaystyle \sum_{i=0}^{L-1} \mathbbm{1}_{U(\theta^i \omega)} \circ T_\omega^i(x)$
    \end{tabular}.
\end{equation*}
\end{dfn}

These objects are related, for example, in the sense that $\{Z^{\omega,L}_{*U}\ge \ell \}=\{r_{U}^{\omega,\ell}\le L\}$, 
 $\{Z^{\omega,L}_{*U}=\ell\}=\{r_{U}^{\omega,\ell}\le L<r_{U}^{\omega,\ell+1}\}$. When $U = \Gamma_\rho$, we write $I^{\omega,\rho}_i=\mathbbm{1}_{\Gamma_\rho(\theta^i\omega)}\circ T_\omega^i$.

 \begin{dfn}\label{def:hitmark}
     The \textbf{hit marking function} of $(\theta,\mathbb{P},T_\omega,\mu_\omega,U)$ with noise $\omega \in \Omega$ and up time $L \ge 1$ is given by
     \begin{equation*}
        \begin{tabular}{r @{} c @{} c @{} c @{} l}
       $\text{ }Y_{U}^{\omega,L}$ & $:$  & $M$ & $\rightarrow$ & $\mathfrak{M}$  \\
        & & $x$ & $\mapsto$ & $\displaystyle \sum_{i=0}^{L-1} \delta_{\nicefrac{i}{L}}\mathbbm{1}_{U(\theta^i \omega)} \circ T_\omega^i(x)$
        \end{tabular},
     \end{equation*}where $\mathfrak{M}=\{ \sum_{i=1}^\kappa \delta_{x_i} : \kappa < \infty, (x_i)_{i=1}^\kappa \subset [0,1] \}$\footnote{\label{note:topologies}The set $\mathfrak{M}$ can be given the vague topology (with $C^+_K([0,1])$ test functions, see \cite{resnick2008extreme} section 3.4), making it a complete separable metric space, while $\mathcal{P}(\mathfrak{M})$ is another topological space with the weak topology (with $C_b^+(\mathfrak{M})$ test functions, see \cite{resnick2008extreme} section 3.5).}.
 \end{dfn}

\begin{notation}
    A $\mathbb{R}$-valued function defined on the product space, $f(\omega,x)$, is often rewritten as $f^\omega(x)$ or $f_\omega(x)$ and seen as a family of functions defined on $M$. And vice versa. When integrating a function, we may omit the variable of integration, even if it is a sup/subscript. We leave it for the reader to infer what variables and parameters are being integrated and were omitted. 
\end{notation}

\begin{notation}
    Consider non-negative sequences $a(n)$ and $b(n)$ ($n \ge 0$). we will write $a(n) \lesssim_n b(n)$ to mean that there exists a quantity $C>0$, independent of $n$, so that $a(n) \le C b(n) \text{ }(\forall n \ge 0)$. When $a$ and $b$ have more arguments, we indicate which of them are controlled uniformly. For example:\\i) $a(n,m) \lesssim_n b(n,m)$ when there exists $C_m>0$ so that $a(n,m) \le C_m b(n,m)$ ($\forall n, m \ge 0$),\\   
    ii) $a(n,m) \lesssim_{n,m} b(n,m)$ when there exists $C>0$ so that $a(n,m) \le C b(n,m)$ ($\forall n, m \ge 0$).
    
When some of the arguments are taken to the limit, we implicitly consider that these are the ones being controlled uniformly and we omit the associated subscripts from the $\lesssim$ symbol. 
We also employ the usual big-O and little-o notation.    
\end{notation}

\begin{dfn}\label{def:cpd}
    The compound Poisson distribution with intensity parameter $s \in \mathbb{R}_{>0}$ and cluster size distribution $(\lambda_\ell)_{\ell \in \mathbb{N}_{\ge 1}} \in \mathcal{P}(\mathbb{N}_{\ge 1})$, $\sum_{\ell =1}^\infty \ell \lambda_\ell < \infty$, denoted $\operatorname{CPD}_{s,(\lambda_\ell)_\ell} \in \mathcal{P}(\mathbb{N}_{\ge 0})$, is the distribution of a random variable $M : (\mathcal{X}, \mathscr{X}, \mathbb{Q}) \rightarrow \mathbb{N}_{\ge 0}$ given by $
M(\xi) = \sum_{j=1}^{N(\xi)} Q_j(\xi)$, where $N$ is a $\mathbb{N}_{\ge 0}$-valued random variable on $(\mathcal{X}, \mathscr{X}, \mathbb{Q})$ having Poisson distribution with intensity parameter $s$ and $(Q_j)_{j \in \mathbb{N}_{\ge 1}}$ is a sequence of $\mathbb{N}_{\ge 1}$-valued random variables on $(\mathcal{X}, \mathscr{X}, \mathbb{Q})$ which are iid, independent of $N$ and whose entries have distribution $\mathbb{Q}(Q_j =\ell) = \lambda_\ell$ ($j,\ell \in \mathbb{N}_{\ge 1}$). Denote $R_l = \sum_{j=1}^l Q_j$. Then the probability mass function of $\operatorname{CPD}_{s,(\lambda_\ell)_\ell}$ is given indirectly by \begin{equation}\label{eq:cpddensity}
\begin{tabular}{c c l}
    $\displaystyle \operatorname{CPD}_{\gamma,(\lambda_\ell)_\ell}(n)$ & $=$ & $\displaystyle \sum_{l=1}^n \mathbb{P}(N=l) \mathbb{P}(R_l=n) = \sum_{l=1}^n \frac{\displaystyle s^l e^{-s}}{\displaystyle l!} \sum_{\substack{(n_1,\ldots,n_l) \in \mathbb{N}_{\ge 1}^l \\ n_1 + \ldots + n_l =n}} \prod_{i=1}^l \lambda_{n_i}$.
\end{tabular}
\end{equation}
\end{dfn} 

\begin{dfn}\label{def:cppp}
    The compound Poisson point process with intensity parameter $s \in \mathbb{R}_{>0}$ and cluster size distribution $(\lambda_\ell)_{\ell \in \mathbb{N}_{\ge 1}} \in \mathcal{P}(\mathbb{N}_{\ge 1})$, $\sum_{\ell =1}^\infty \ell \lambda_\ell < \infty$, denoted $\operatorname{CPPP}_{s,(\lambda_\ell)_\ell} \in \mathcal{P}(\mathfrak{M})$, is the distribution of a random variable $N : (\mathcal{X}, \mathscr{X}, \mathbb{Q}) \rightarrow \mathfrak{M}$ that satisfies:

    \begin{itemize}
    \item[-] $\forall (F_1,\ldots,F_k) \subset \mathscr{B}_{[0,1]}$ mutually disjoint, $( N(\cdot)(F_i) )_{i=1}^k$ is iid,
    \item[-] $\forall F \in \mathscr{B}_{[0,1]}$, ${N(\cdot) (F)}_* \mathbb{Q} = CPD_{s \operatorname{Leb}(F), (\lambda_\ell)_{\ell } }$.
\end{itemize}
\end{dfn}

\subsection{Statistical quantities}\label{sec:usestatquant}
\begin{notation}
    Write $\lim\limits_{L \to \infty} \overline{\varliminf\limits_{\rho \to 0}} a(L,\rho)$ for the value of $\lim\limits_{L \to \infty} \varlimsup\limits_{\rho \to 0} a(L,\rho)$ and $\lim\limits_{L \to \infty} \varliminf\limits_{\rho \to 0} a(L,\rho)$, when they do exist and coincide. Denote also $\stackrel{+}{a} \hspace{-1mm}(L) := \varlimsup\limits_{\rho \to 0} a(L,\rho)$ and $\stackrel{-}{a}\hspace{-1mm}(L) := \varliminf\limits_{\rho \to 0} a(L,\rho)$.    
\end{notation}

We now introduce a few quantities that play a major role in the theory. Those denoted with a `$\lambda$' are hitting quantities, and those with an `$\alpha$' are return quantities. 
Whenever the following limits exist (and the appropriate ones coincide), denote, for $\ell \ge 1$ and $\omega \in \Omega$:

\begin{statistics}
    \item \begin{equation}\label{eq:lambdaomegadef}
\lambda^\omega_\ell = \lim\limits_{L \to \infty} \overline{\varliminf\limits_{\rho \to 0}} \lambda_\ell^\omega(L,\rho)
\end{equation}where
\begin{equation}\label{eq:deftildelambdaomegainner}
\begin{tabular}{c}
$\displaystyle \lambda^\omega_\ell(L,\rho) = \mu_\omega(Z^{\omega,L}_{\Gamma_\rho} =  \ell | Z^{\omega,L}_{\Gamma_\rho} > 0) = \frac{\mu_\omega(Z^{\omega,L}_{\Gamma_\rho} =  \ell)}{\mu_\omega( Z^{\omega,L}_{\Gamma_\rho} > 0)}.$ 
\end{tabular}
\end{equation}

\item \begin{equation}\label{eq:lambdadef}
\lambda_\ell = \lim\limits_{L \to \infty} \overline{\varliminf\limits_{\rho \to 0}} \lambda_\ell(L,\rho)
\end{equation}where
\begin{equation}\label{eq:deftildelambdainner}
\lambda_\ell(L,\rho) = \hat\mu(Z^{L}_{\Gamma_\rho} =  \ell | Z^{L}_{\Gamma_\rho} > 0) = \frac{\hat\mu(Z^{L}_{\Gamma_\rho} =  \ell)}{\hat\mu( Z^{L}_{\Gamma_\rho} > 0)} = \int_\Omega \lambda^\omega_\ell(L,\rho) \frac{\mu_\omega(Z^{\omega,L}_{\Gamma_\rho} >0) }{\int_\Omega \mu_\omega(Z^{\omega,L}_{\Gamma_\rho} >0) 
 d \mathbb{P}(\omega)} d \mathbb{P}(\omega).
\end{equation}

    \item \begin{equation}\label{eq:hatalphaomegadef}
\hat\alpha^\omega_\ell = \lim\limits_{L \to \infty} \overline{\varliminf\limits_{\rho \to 0}} \hat\alpha_\ell^\omega(L,\rho)\footnotemark
\end{equation}\footnotetext{\label{note:Lmonotone}Notice that, by $L$-monotonicity, the outer limits always exist provided that the inner ones do.}where
\begin{equation}\label{eq:deftildealphahatomegainner}
\hat\alpha_\ell^\omega(L,\rho) = \mu_\omega(Z^{\omega,L}_{\Gamma_\rho} \ge \ell  | I_0^{\omega,\rho}=1) = \frac{\mu_\omega(Z^{\omega,L}_{\Gamma_\rho} \ge \ell  , I_0^{\omega,\rho}=1)}{\mu_\omega(\Gamma_\rho(\omega))}
\end{equation}

    \item \begin{equation}\label{eq:alphaomegadef}
\alpha^\omega_\ell = \lim\limits_{L \to \infty} \overline{\varliminf\limits_{\rho \to 0}} \alpha_\ell^\omega(L,\rho)
\end{equation}where
\begin{equation}\label{eq:deftildealphaomegainner}
 \alpha_\ell^\omega(L,\rho) = \mu_\omega(Z^{\omega,L}_{\Gamma_\rho} = \ell | I_0^{\omega,\rho}=1) = \frac{\mu_\omega(Z^{\omega,L}_{\Gamma_\rho} = \ell, I_0^{\omega,\rho}=1)}{\mu_\omega(\Gamma_\rho(\omega))}.
\end{equation}
\end{statistics}

Since $\{Z^{\omega,L}_{\Gamma_\rho} \ge \ell \} \supset  \{Z^{\omega,L}_{\Gamma_\rho} \ge \ell+1 \}$ and $\{Z^{\omega,L}_{\Gamma_\rho} \ge \ell \} \setminus \{Z^{\omega,L}_{\Gamma_\rho} \ge \ell+1 \} = \{Z^{\omega,L}_{\Gamma_\rho} = \ell \}$, then 
\begin{equation}\label{eq:deftildealphaomegainner2}
\hat\alpha^\omega_{\ell}(L,\rho)-\hat\alpha^\omega_{\ell+1}(L,\rho)=\alpha^\omega_\ell(L,\rho).
\end{equation}
which entails that the existence of $\hat\alpha^\omega_\ell$'s implies that of the $\alpha^\omega_\ell$'s with $\alpha^\omega_\ell = \hat\alpha^\omega_\ell - \hat\alpha^\omega_{\ell+1}$.

\begin{statistics}
\item \begin{equation}\label{eq:hatalphadef}
\hat\alpha_\ell = \lim\limits_{L \to \infty} \overline{\varliminf\limits_{\rho \to 0}} \hat\alpha_\ell(L,\rho)\footnotemark,
\end{equation}\footnotetext{See footnote \ref{note:Lmonotone}.}where
\begin{equation}
\hat\alpha_\ell(L,\rho) \hspace{-0.5mm} = \hspace{-0.5mm} \hat{\mu}(Z^{L}_{\Gamma_\rho} \ge \ell | I_0^\rho =1 ) \hspace{-0.5mm} = \hspace{-0.5mm} \frac{\hat{\mu}(Z^{L}_{\Gamma_\rho} \ge \ell, I_0^\rho =1 )}{\hat{\mu}(\Gamma_\rho )} \hspace{-0.5mm} = \hspace{-0.5mm} \int_\Omega \hat\alpha^\omega_\ell(L,\rho)\frac{\mu_\omega(\Gamma_\rho (\omega))} {\int_\Omega \mu_\omega(\Gamma_\rho (\omega)) d \mathbb{P}(\omega) } d \mathbb{P}(\omega).
\end{equation}

\item \begin{equation}\label{eq:alphadef}
\alpha_\ell = \lim\limits_{L \to \infty} \overline{\varliminf\limits_{\rho \to 0}} \alpha_\ell(L,\rho) 
\end{equation}where
\begin{equation}\label{eq:deftildealphainner}
\alpha_\ell(L,\rho) \hspace{-0.5mm} = \hspace{-0.5mm} \hat{\mu}(Z^{L}_{\Gamma_\rho}  = \ell | I_0^\rho =1 ) \hspace{-0.5mm} = \hspace{-0.5mm} \frac{\hat{\mu}(Z^{L}_{\Gamma_\rho} = \ell, I_0^\rho =1 )}{\hat{\mu}( \Gamma_\rho )}
 \hspace{-0.5mm} = \hspace{-0.5mm} \int_\Omega \alpha^\omega_\ell(L,\rho)\frac{\mu_\omega(\Gamma_\rho (\omega))} {\int_\Omega \mu_\omega(\Gamma_\rho (\omega)) d \mathbb{P}(\omega) } d \mathbb{P}(\omega).
\end{equation}
\end{statistics}

Since $\{Z^{L}_{\Gamma_\rho} \ge \ell \} \supset  \{Z^{L}_{\Gamma_\rho} \ge \ell+1 \}$ and $\{Z^{L}_{\Gamma_\rho} \ge \ell \} \setminus \{Z^{L}_{\Gamma_\rho} \ge \ell+1 \} = \{Z^{L}_{\Gamma_\rho} = \ell \}$, then 
\begin{equation}
\hat\alpha_{\ell}(L,\rho)-\hat\alpha_{\ell+1}(L,\rho)=\alpha_\ell(L,\rho).
\end{equation}
which entails that the existence of $\hat\alpha_\ell$'s implies that of the $\alpha_\ell$'s with $\alpha_\ell = \hat\alpha_\ell - \hat\alpha_{\ell+1}$.

\subsection{Working setup}\label{sec:wrksetup}

Now we particularize the general setup of section \ref{sec:gensetup} to specify our working setup.

So we consider a system $(\theta, \mathbb{P}, T_\omega, \mu_\omega, \Gamma)$ satisfying the following hypotheses.

\begin{hyp}[Ambient]\label{hyp:amb} Let $M$ be a compact Riemannian manifold and $\Omega$ a compact metric space.   
\end{hyp}

\begin{hyp}[Invertibility features] \label{hyp:inv}
\end{hyp}
\begin{subhyp}[Degree] \label{hyp:deg}
    $\forall \omega \in \Omega, \forall n \ge 1$, $\forall x \in M: \# (T_\omega^n)^{-1}(\{x\}) < \infty$ with
    \begin{equation*}
    \sup_{ n \ge 0} \# (T_\omega^n)^{-1}(\{x\}) {\le} \infty \hspace{0.5mm}(\forall \omega, x),  \sup_{ \omega \in \Omega} \# (T_\omega^n)^{-1}(\{x\}) {\le} \infty \hspace{0.5mm}(\forall n, x),\sup_{ x \in M} \# (T_\omega^n)^{-1}(\{x\}) {<} \infty \hspace{0.5mm} (\forall \omega, n).
    \end{equation*}
\end{subhyp}
\begin{subhyp}[Covering] \label{hyp:cov}
    $\exists R >0, \mathcal{N} \ge 1, \forall \omega \in \Omega, \forall n \ge 1, \exists (y^{\omega,n}_k)_{k \in K_{\omega,n} } \subset M$ with $\# K_{\omega,n}<\infty$ so that $(B_R( y^{\omega,n}_k))_{k \in K_{\omega,n}}$ has at most $\mathcal{N}$ overlaps.
\end{subhyp}

Terminology suggests that $(B_R( y^{\omega,n}_k) )_{k \in K_{\omega,n}}$ covers $M$ entirely, but a small defect is allowed, in the sense of (H\ref{hyp:largecov}) below. 
\begin{subhyp}[Inverse branches] \label{hyp:ib}
    $\forall \omega \in \Omega, \forall n \ge 1, \forall k \in K_{\omega,n}$, $$\operatorname{IB}^{\omega,n}_k = \{ \varphi: B_R( y^{\omega,n}_k) \rightarrow M \text{ diffeomorphic onto its image with } T_\omega^n \circ \varphi = \operatorname{id} \}$$is non-empty, finite\footnote{Cardinalities behave as in (H\ref{hyp:deg}).} and so that $\varphi, \psi \in \operatorname{IB}^{\omega,n}_k, \varphi \neq \psi \Rightarrow \varphi(\operatorname{dom}(\varphi)) \cap \psi(\operatorname{dom}(\psi)) = \emptyset$. In particular, the set $\operatorname{IB}(T_\omega^n) = \bigcup_{k \in K_{\omega,n}} \operatorname{IB}^{\omega,n}_k$ is finite and so that $\varphi, \psi \in \operatorname{IB}(T_\omega^n), \operatorname{dom}(\varphi) \cap \operatorname{dom}(\psi) = \emptyset \Rightarrow \varphi(\operatorname{dom}(\varphi)) \cap \psi(\operatorname{dom}(\psi)) = \emptyset$.
\end{subhyp}

The following item is a consequence of the previous ones, but we list it here for convenience.

\begin{subhyp}[Cylinders] \label{hyp:cyl}
    $\forall \omega \in \Omega, \forall n \ge 1$,  $C^\omega_n = \{ \xi = \varphi(\operatorname{dom}(\varphi)) : \varphi \in \operatorname{IB}(T_\omega^n) \}$ is finite and has at most $\mathcal{N}$ overlaps.
\end{subhyp}

\begin{subhyp}[Large covering] \label{hyp:largecov}
For $\mathbb{P}$-a.e. $\omega \in \Omega$, $\forall n \ge 1$, $\mu_\omega\left(M \setminus \bigcup_{\xi \in C^\omega_n} \xi\right) = 0$.
\end{subhyp}

\begin{subhyp}[Big images] \label{hyp:nondegen}
 $\exists \iota >0$ so that
\begin{equation*}
    \essinf_{\omega \in \Omega} \inf_{n \ge 1} \inf_{k \in K_{\omega,n}} \mu_{\theta^n \omega} ( B_R( y^{\omega,n}_k )) > \iota.
\end{equation*}
\end{subhyp}

Next, we consider that the aforementioned (plain) cylinders are refined enough as to split and distinguish regions with different hyperbolic behavior. 

\begin{hyp}[Hyperbolicity and cylinders] \label{hyp:hyper} 
Plain cylinders split into acceptable (and unacceptable) cylinders, whereas acceptable cylinders subsplit into good (and bad) cylinders. \\Namely: $\forall \omega \in \Omega, \forall n \ge 1:$ $C_\omega^n = \stackrel[]{+}{C^\omega_n} \sqcup \stackrel[]{-}{C^\omega_n}$, $\stackrel[]{+}{C^\omega_n} = \stackrel[]{++}{C^\omega_n} \sqcup \stackrel[]{+-}{C^\omega_n}$, making measurable
\begin{equation*}
    \stackrel[]{*}{\mathcal{C}_n}(\omega,x) = \begin{cases}
        1, x \in \bigcup_{ \xi \in \stackrel[]{*}{C^\omega_n} } \xi, \\ 0, \text{ otherwise} 
    \end{cases} \textbf{ }(* \in \{ +, -, ++, +- \}).
\end{equation*}

\begin{notation}
For $* \in \{+,-,++,+-\}$, write $\stackrel{*}{\operatorname{IB}}(T_\omega^n) {=} \{ \varphi \in \operatorname{IB}(T_\omega^n) : \xi = \varphi(\operatorname{dom}(\varphi)) \in \stackrel[]{*}{C^\omega_n} \}$.    
\end{notation}

This splitting distinguishes hyperbolic behavior in the sense of satisfying: 
\end{hyp}

\begin{subhyp}[Weak hyperbolicity on plain cylinders] \label{hyp:weakhypplaincyl} $\forall n \ge 1:$
\begin{equation*}
    1 \le \inf_{\omega \in \Omega} \inf_{\xi \in C^\omega_n} \inf_{\substack{v \in T_xM \\ \|v\| =1}} |DT_\omega^n(x) v|  \le \sup_{\omega \in \Omega} \sup_{\xi \in C^\omega_n} \sup_{x \in \xi} \sup_{\substack{v \in T_xM \\ \|v\| =1}} |DT_\omega^n(x) v|  \le \infty.
\end{equation*}
\end{subhyp}

\begin{subhyp}[Bounded derivatives on acceptable cylinders]\label{hyp:bddderiv}
    $\forall n \ge 1$:
    \begin{equation*}
         \sup_{\omega \in \Omega} \sup_{\xi \in C^\omega_n} \sup_{x \in \xi} \sup_{\substack{v \in T_xM \\ \|v\| =1}} |DT_\omega^n(x) v|  =: a_n < \infty.
    \end{equation*}
\end{subhyp}
\begin{subhyp}[Distortion on good cylinders] \label{hyp:dist}$\exists \mathfrak{d} \ge 0, \exists C >1$, $\forall n \ge 1$: (denoting $\xi = \varphi(\operatorname{dom}(\varphi))$) $$\esssup_{\omega \in \Omega} \sup_{\varphi \in \stackrel{++}{\operatorname{IB}} \hspace{-0.5mm}(T_\omega^n)} \sup_{x,y \in \xi} \frac{J_\varphi(x)}{J_\varphi(y)} \le C n^\mathfrak{d},$$where$$J_\varphi(x) = \frac{d \varphi_* \left[ \mu_{\theta^n \omega}|_{\operatorname{dom}(\varphi)} \right] }{d\mu_\omega|_{\varphi(\operatorname{dom}(\varphi))}}(x) = \frac{d \varphi_* \left[ \mu_{\theta^n \omega}|_{T_\omega^n \xi} \right ] }{d\mu_\omega|_{\xi}}(x).$$ 
\end{subhyp}
\begin{subhyp}[Backward contraction on good cylinders] \label{hyp:backcontr}
$\exists \kappa > 1, \exists D >1, \forall n \ge 1$: (denoting $\xi = \varphi(\operatorname{dom}(\varphi))$) $$\esssup_{\omega \in \Omega} \hspace{-1mm}\sup_{\varphi \in \stackrel{++}{\operatorname{IB}}(T_\omega^n)} \sup_{z \in \operatorname{dom}(\varphi)} \sup_{ \substack{ v \in T_z M \\ \|v\|=1} }  |D \varphi(z) v| {\le} D n^{- \kappa} \hspace{1.5mm}\text{i.e.}\hspace{1mm} D n^{\kappa} {\le} \essinf_{\omega \in \Omega} \hspace{-1mm} \inf_{\varphi \in \stackrel{++}{\operatorname{IB}}(T_\omega^n)} \inf_{x \in \xi} \inf_{ \substack{ v \in T_x M \\ \|v\|=1} } | D T_\omega^n(x) v|,$$and, in particular, $$\esssup_{\omega \in \Omega} \sup_{\varphi \in \stackrel{++}{\operatorname{IB}} \hspace{-0.5mm}(T_\omega^n)} \operatorname{diam}(\xi) \le D n^{- \kappa}.$$
\end{subhyp}

\begin{hyp}[Target position] \label{hyp:targetposit} 
\end{hyp}
\begin{subhyp}[Uniform inclusion in adequate set]\label{hyp:quenchedsep}
$%\exists u:\mathbb{N}_{\ge 1} \to \mathbb{R}, \lim_{m \to \infty} u(m)=0,
\forall L \ge 1, \exists \rho_{\operatorname{sep}}(L) >0, \forall \rho \le \rho_{\operatorname{sep}}(L), \forall \omega \in \Omega$:
%\footnotetext{\label{note:niceset}In principle, $\Omega_{\operatorname{nice}}$ could be dependent on $L$, i.e., $\Omega_{\operatorname{nice}}(L)$, but we might take independent of $L$, like $\bigcap_{L \ge 1} \Omega_{\operatorname{nice}}(L)$, which still has full measure and whose $\omega$'s satisfy the desired property, for whichever $L$, provided that $\rho \le \rho_{\operatorname{sep}}(L)$. Also, $\Omega_{\operatorname{nice}}$ can be made $\theta$-invariant, redefining it as $\bigcap_{i \in \mathbb{N}_{\ge 0}} \theta^{-i} \Omega_{\operatorname{nice}}$, still of full measure.} 
%\\$\color{green}\forall \epsilon>0, \forall L \ge 1, \exists \rho_{\operatorname{sep}}(\epsilon,L)>0, \exists \Omega_{\operatorname{nice}}(\epsilon,L) \in \mathscr{B}_\Omega, \mathbb{P} (\Omega_{\operatorname{nice}}(\epsilon,L)) \ge 1 - \epsilon L^{-1}, \forall \rho \le \rho_{\operatorname{sep}}(\epsilon,L), \forall \omega \in \Omega_{\operatorname{nice}}(\epsilon,L):$
$$\forall 1 \le L' \le L,\forall 0 \le j \le L'-1 : (T_\omega^j)^{-1} \Gamma_{\nicefrac{3}{2}\rho}(\theta^j \omega) \subset \hspace{1mm} \stackrel{+}{\mathcal{C}} \hspace{-3.5mm}\phantom{\mathcal{C}}^\omega_{L'-1} .$$
\end{subhyp}

\begin{subhyp}[Quenched separation from non-good set]\label{hyp:quenchedsep2} It holds that
\begin{equation*}
    \lim_{L \rightarrow \infty} \varliminf_{\rho \to 0} \hspace{-7mm}\overline{\phantom{\varliminf}}  \hspace{2mm}
  \sum_{n=L}^\infty \sum_{i=0}^{  \lfloor 1/ \hat{\mu}(\Gamma_\rho)  \rfloor  } \mu_{\theta^i \omega }\left(\Gamma(\theta^i \omega) \cap \left[\stackrel[]{+-}{\mathcal{C} } \hspace{-4mm}\phantom{\mathcal{C}}^{\theta^i \omega }_n \cup \stackrel[]{-}{\mathcal{C} } \hspace{-3mm}\phantom{\mathcal{C}}^{\theta^i \omega }_n  \right] \right)   = 0 \text{, }\mathbb{P}\text{-a.s.}.
\end{equation*}
\end{subhyp}

%\begin{subhyp}[Averaged separation from non-good set]\label{hyp:avgsep} It holds that
%\begin{equation*}
%    \lim_{L \rightarrow \infty} \varliminf_{\rho \to 0} \hspace{-7mm}\overline{\phantom{\varliminf}}  \hspace{2mm}
%  \sum_{n=L}^\infty \frac{\hat{\mu} \left(\Big[ \hspace{-1.5mm} \stackrel{+-}{\mathcal{C}} \hspace{-5mm} \phantom{C}_{n} \cup \stackrel{-}{\mathcal{C}} \hspace{-4mm} \phantom{C}_{n} \Big] \cap \Gamma_\rho \right) }{ \hat{\mu}(\Gamma_\rho) } = 0.
%\end{equation*}
%\end{subhyp}

\begin{hyp}[Lipschitz regularities]\label{hyp:lip}
\begin{subhyp}[Map]
    $\sup_{x \in M} \operatorname{Lip}(T_\cdot(x): \Omega \to M) < \infty$.
\end{subhyp}
\begin{subhyp}[Driving]
$\operatorname{Lip}(\theta) < \infty$.    
\end{subhyp}
\begin{subhyp}[Target]
    $\operatorname{Lip}(\Gamma: \Omega \to \mathscr{P}(M)) < \infty$,where $\mathscr{P}(M) = \{A \subset M, A \text{ compact, }A \neq \emptyset\}$ is equipped with the Hausdorff distance $d_H(A,B) = \sup\limits_{x \in A} \inf\limits_{y \in B} d(x,y) \vee \sup\limits_{y \in B} \inf\limits_{X \in A} d(x,y)$, which makes it a compact metric space.
\end{subhyp}
\end{hyp}

\begin{hyp}[Measure regularity] \label{hyp:meas}
\end{hyp}

\begin{subhyp}[Ball regular] \label{hyp:ball} $\exists 0 < d_0 \le d_1 < \infty, \exists C_0,C_1 > 0, \exists \rho_{\operatorname{dim}} \le 1, \forall \rho \le \rho_{\operatorname{dim}}$, for $\mathbb{P}$-a.e. $\omega \in \Omega$: $$C_1 \rho^{d_1} \le \mu_\omega(\Gamma_\rho(\omega)) \le C_0 \rho^{d_0}.$$
\end{subhyp}

\begin{subhyp}[Annulus regular] \label{hyp:annulus} $\exists \eta \ge \beta >0, \exists E > 0, \exists \rho_{\operatorname{dim}} \le 1, \forall \rho \le \rho_{\operatorname{dim}}, \forall r \in (0,\rho/2)$, for $\mathbb{P}$-a.e. $\omega \in \Omega$: $$\frac{\mu_\omega(\Gamma_{\rho+r}(\omega) \setminus \Gamma_{\rho-r}(\omega)) }{\mu_\omega(\Gamma_\rho(\omega)) } \le E \frac{r^\eta}{\rho^\beta}.$$
\end{subhyp}

\begin{hyp}[Decay of correlations] \label{hyp:dec}
$\exists \mathfrak{p}>1$ so that
\end{hyp}
\begin{subhyp}[Quenched] \label{hyp:quenchdec} For $\mathbb{P}$-a.e. $\omega \in \Omega$, $\forall G \in \operatorname{Lip}_{d_M}(M,\mathbb{R}), \forall H \in L^\infty(M,\mathbb{R}), \forall n \ge 1$: $$\left| \int_M G \cdot (H \circ T_\omega^n) d \mu_\omega - \mu_\omega(G) \mu_{\theta^n \omega}(H) \right| \lesssim n^{-\mathfrak{p}}  \|G\|_{\operatorname{Lip}_{d_M}} \|H\|_\infty.$$
\end{subhyp}

\begin{subhyp}[Annealed] \label{hyp:annealdec} $\forall G \in \operatorname{Lip}_{d_{\Omega \times M}}(\Omega \times M,\mathbb{R}), \forall H \in L^\infty(\Omega \times M,\mathbb{R}), \forall n \ge 1$: $$\left| \int_{\Omega \times M} G \cdot (H \circ S^n) d \hat\mu - \hat\mu(G) \hat\mu(H) \right| \lesssim n^{-\mathfrak{p}} \|G\|_{\operatorname{Lip}_{d_{\Omega \times M}}} \|H\|_\infty.$$ 
\end{subhyp}

\begin{hyp}[Hitting regular] \label{hyp:hitting}     
    \begin{equation*}
      \exists (\lambda_\ell)_{\ell \ge 1},  \sum\nolimits_{\ell =1}^\infty \lambda_\ell = 1, \sum\nolimits_{\ell =1}^\infty \ell^3 \lambda_\ell <\infty.
    \end{equation*}
\end{hyp}

\begin{hyp}[Return regular] \label{hyp:return}
    \begin{equation*}
       \exists (\alpha_\ell)_{\ell \ge 1} , \alpha_1 >0, \sum_{\ell =1}^\infty \alpha_\ell = 1, \sum_{\ell =1}^\infty \ell^2 \alpha_\ell <\infty.
    \end{equation*}
    We call $\alpha_1$ the extremal index.
\end{hyp}

\vspace{3mm}

\hspace{-4mm}\textbf{H\ref{hyp:return}'} (Pre return regular)\textbf{.} \textit{It holds that}
\begin{equation*}
    \exists (\hat\alpha_\ell)_{\ell \ge 1}, \hat\alpha_1 - \hat\alpha_2 >0, \sum_{\ell=1}^\infty \ell \hat\alpha_\ell < \infty.
\end{equation*}

Using the final implication of item VI), it is immediate that (H\ref{hyp:return}') $\Rightarrow$ (H\ref{hyp:return}), because $\alpha_1 = \hat\alpha_1 - \hat\alpha_2 >0$, $\sum_{\ell=1}^\infty \alpha_\ell = \hat\alpha_1 = 1$, and $\sum_{\ell=1}^\infty \ell^2 \alpha_\ell \le 2 \sum_{\ell=1}^\infty \ell \hat\alpha_\ell <\infty$.

%\begin{hyp}[Quenched pre return regular] \label{hyp:quenchreturn}
%\begin{equation*}
%    \forall \omega \in \Omega, \exists (\hat\alpha^\omega_\ell)_{\ell \ge 1}, \lim_{\substack{N \to \infty \\ \omega\text{-unif}}}\sum_{\ell=1}^N \ell \hat{\alpha}^\omega_\ell = S(\omega) < \infty
%\end{equation*}
%\begin{equation*}
%    \forall \ell \ge 1, \forall L \ge 1: \varlimsup_{\substack{N \to \infty \\ \omega\text{-unif}}}  \hat\alpha^\omega_\ell(L,\rho)  = \stackrel[]{+}{\hat\alpha} \hspace{-3mm}\phantom{\lambda}^\omega_\ell (L), \varliminf_{\substack{N \to \infty \\ \omega\text{-unif}}}  \hat\alpha^\omega_\ell(L,\rho)  = \stackrel[]{-}{\hat\alpha} \hspace{-3mm}\phantom{\lambda}^\omega_\ell (L)
%\end{equation*}
%\end{hyp}

Moreover, for technical conditions, we assume that the quantities appearing in the previous hypotheses harmonize so that the following constraints hold. Mostly, they hold when (polynomial) decay is sufficiently fast. 

\begin{hyp}[Parametric constraints] \label{hyp:param}
It holds that
\end{hyp}
\begin{subhyp} \label{hyp:paramincenter}
$d_0 (\mathfrak{p}-1) > \frac{ 2\left(\frac{\beta + d_1}{ \eta} \vee 1\right) + d_1 }{ d_0/d_1}$,
\end{subhyp}
\begin{subhyp} \label{hyp:parampositrhoexp}
$\frac{d_0}{\mathfrak{d}+1
} \mathfrak{p} >  2\left(\frac{\beta + d_1}{ \eta} \vee 1\right) + d_1 $,
\end{subhyp}
\begin{subhyp} \label{hyp:parampolynomialseries}
$\mathfrak{d}<\kappa d_0-1$.
\end{subhyp}

\subsection{Main results}\label{sec:mainresults}

The first result, although interesting on its own, plays mostly an auxiliary role. Valid in the general setup of section \ref{sec:gensetup}, it expresses hitting quantities ($\lambda_\ell$'s) in terms of return quantities ($\alpha_\ell$'s). This is providential because the former quantities are the ones central to the theory, but the latter quantities can be computed directly on examples. 

%%%%%%%%%%%%%%%%%%%%%%%%%%%%%%%%%%%%%%%%%%
%%%%%%%%%%%%%%%%%%%%%%%%%%%%%%%%%%%%%%%%%%
%%%%%%%  THEOREM: LAMBDA AND ALPHA HAT
%%%%%%%%%%%%%%%%%%%%%%%%%%%%%%%%%%%%%%%%%%
\begin{thrm}\label{thm:lambdaalpha}
Let $(\theta, \mathbb{P}, T_\omega, \mu_\omega, \Gamma)$ be a system as described in section \ref{sec:gensetup}, with $(\theta,\mathbb{P})$ only assumed invariant.

Then
$$\text{(H\ref{hyp:return}')}\text{ }\Rightarrow \text{ } \lambda_\ell = \frac{\alpha_\ell - \alpha_{\ell +1}}{\alpha_1} \text{ }(\ell \ge 1) \text{ and } \text{(H\ref{hyp:hitting})}.$$  
\end{thrm}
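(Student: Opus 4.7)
The plan is to derive a closed-form expression for $\hat\mu(Z^L_{\Gamma_\rho}\ge\ell)$ as a Cesàro-type sum of return quantities $\alpha_\ell(M,\rho)$ over $M\le L$, then pass to the double limit using (H\ref{hyp:return}') and Cesàro's theorem. The decisive identity is established via a partition-and-forward-shift argument that avoids any appeal to a natural extension of $S$.

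\textbf{Step 1 (key identity).} I would first prove that for all $L\ge\ell\ge 1$,
\[
\hat\mu(Z^L_{\Gamma_\rho}\ge\ell) \;=\; \hat\mu(\Gamma_\rho)\sum_{M=1}^{L}\alpha_\ell(M,\rho).
\]
Writing $Z_a^b := \sum_{j=a}^{b-1} I_j^\rho$, one partitions $\{Z^L_{\Gamma_\rho}\ge\ell\}$ according to the position of the $\ell$-th hit \emph{counted from the right-end} of the window $\{0,\ldots,L-1\}$:
\[
\{Z^L_{\Gamma_\rho}\ge\ell\} \;=\; \bigsqcup_{i=0}^{L-1}\bigl\{I_i^\rho = 1,\; Z_{i+1}^L = \ell-1\bigr\}.
\]
Since $I_j^\rho\circ S = I_{j+1}^\rho$, the computation $S^{-i}\{I_0^\rho=1,\,Z_1^{L-i}=\ell-1\} = \{I_i^\rho=1,\,Z_{i+1}^L=\ell-1\}$ combined with $S$-invariance of $\hat\mu$ yields $\hat\mu(I_i^\rho=1,\,Z_{i+1}^L=\ell-1) = \hat\mu(I_0^\rho=1,\,Z_1^{L-i}=\ell-1)$. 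On $\{I_0^\rho=1\}$ one has $Z_0^{L-i} = 1 + Z_1^{L-i}$, so this equals $\alpha_\ell(L-i,\rho)\hat\mu(\Gamma_\rho)$; summing and reindexing $M = L-i$ gives the identity. Partitioning by the hit \emph{from the right} is the crux: it avoids past windows and uses only forward shifts of $S$.

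\textbf{Step 2 (take the double limit).} Differencing the identity for $\ell$ and $\ell+1$,
\[
\lambda_\ell(L,\rho) \;=\; \frac{\hat\mu(Z^L_{\Gamma_\rho}=\ell)}{\hat\mu(Z^L_{\Gamma_\rho}>0)} \;=\; \frac{\tfrac{1}{L}\sum_{M=1}^{L}\bigl[\alpha_\ell(M,\rho)-\alpha_{\ell+1}(M,\rho)\bigr]}{\tfrac{1}{L}\sum_{M=1}^{L}\alpha_1(M,\rho)}.
\]
Under (H\ref{hyp:return}') the limits $\hat\alpha_\ell(M) := \lim_{\rho\to 0}\hat\alpha_\ell(M,\rho)$ exist for each $M$ and each $\ell$, hence so do $\alpha_\ell(M) := \hat\alpha_\ell(M) - \hat\alpha_{\ell+1}(M) = \lim_{\rho\to 0}\alpha_\ell(M,\rho)$, and $\alpha_\ell(M)\to\alpha_\ell$ as $M\to\infty$. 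The sums being finite, I pass to the $\rho\to 0$ limit term-by-term (the denominator is bounded below by $\alpha_1(1,\rho)=1$); Cesàro's theorem, together with $\alpha_1>0$ from (H\ref{hyp:return}'), then gives
\[
\lim_{L\to\infty}\,\overline{\varliminf_{\rho\to 0}}\;\lambda_\ell(L,\rho) \;=\; \frac{\alpha_\ell-\alpha_{\ell+1}}{\alpha_1}.
\]

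\textbf{Step 3 (verifying (H\ref{hyp:hitting})).} Telescoping, $\sum_{\ell\ge 1}(\alpha_\ell-\alpha_{\ell+1}) = \alpha_1$, so $\sum_\ell\lambda_\ell = 1$. Using $\ell^3-(\ell-1)^3 = 3\ell^2-3\ell+1\le 3\ell^2$ and a second telescoping,
\[
\sum_{\ell\ge 1}\ell^3(\alpha_\ell-\alpha_{\ell+1}) \;=\; \sum_{\ell\ge 1}(3\ell^2-3\ell+1)\alpha_\ell \;\le\; 3\sum_{\ell\ge 1}\ell^2\alpha_\ell \;\le\; 6\sum_{\ell\ge 1}\ell\hat\alpha_\ell \;<\; \infty,
\]
using $\alpha_\ell=\hat\alpha_\ell-\hat\alpha_{\ell+1}$ and the summability in (H\ref{hyp:return}'); dividing by $\alpha_1$ yields (H\ref{hyp:hitting}). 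The only nontrivial piece of the whole argument is Step 1: once the last-hit partition is laid out cleanly and the forward-shift identification $S^{-i}\{\cdots\}=\{\cdots\}$ is made explicit, Steps 2 and 3 reduce to Cesàro and elementary telescoping.
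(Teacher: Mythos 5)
Your proposal is correct and follows essentially the route of the proof the paper omits (the adaptation of Theorem 2 of \cite{haydn2020limiting} to the random setting): the identity $\hat\mu(Z^L_{\Gamma_\rho}\ge\ell)=\hat\mu(\Gamma_\rho)\sum_{M=1}^L\alpha_\ell(M,\rho)$, obtained by partitioning on the position of the $\ell$-th hit counted from the right and shifting forward using $S_*\hat\mu=\hat\mu$, followed by a Ces\`aro passage, is exactly that mechanism, and your remark that the right-end partition needs only forward shifts is precisely what makes the argument work with $(\theta,\mathbb{P})$ merely invariant and $S$ non-invertible. One imprecision in Step 2: with the paper's $\overline{\varliminf}$ convention, (H\ref{hyp:return}') does \emph{not} assert that $\lim_{\rho\to0}\hat\alpha_\ell(M,\rho)$ exists for each fixed $M$, only that $\lim_L\varlimsup_{\rho}$ and $\lim_L\varliminf_{\rho}$ of $\hat\alpha_\ell(L,\rho)$ exist and coincide, so the term-by-term passage to the $\rho$-limit is not licensed as written; the repair is mechanical --- bound $\varlimsup_\rho\lambda_\ell(L,\rho)$ above by the Ces\`aro average of $\varlimsup_\rho\hat\alpha_\ell(M,\rho)-2\varliminf_\rho\hat\alpha_{\ell+1}(M,\rho)+\varlimsup_\rho\hat\alpha_{\ell+2}(M,\rho)$ divided by that of $\varliminf_\rho\hat\alpha_1(M,\rho)-\varlimsup_\rho\hat\alpha_2(M,\rho)$, and symmetrically for $\varliminf_\rho\lambda_\ell(L,\rho)$; both bounds tend to $(\alpha_\ell-\alpha_{\ell+1})/\alpha_1$ by (H\ref{hyp:return}') and Ces\`aro, which simultaneously shows that the double limit defining $\lambda_\ell$ exists. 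Step 3 is fine, with the caveat that the nonnegativity $\alpha_\ell-\alpha_{\ell+1}\ge0$ used in the telescoping is available only \emph{after} Step 2, as a consequence of $\lambda_\ell(L,\rho)\ge0$.
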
 

Theorem \ref{thm:lambdaalpha} generalizes theorem 2 from \cite{haydn2020limiting} to the random situation. Its proof is basically the same, so we omit it. The interested reader can find the adapted proof in \cite{amorim2023compound}.  

\begin{remark}
    Theorem \ref{thm:lambdaalpha} implies that $\alpha_1 = (\sum_{\ell=1}^\infty \ell \lambda_\ell)^{-1}$.
\end{remark}

Let us now formulate our main result. It says that the systems prescribed in section \ref{sec:wrksetup} have compound Poissonian quenched hitting statistics.

%%%%%%%%%%%%%%%%%%%%%%%%%%%%%%%%%%%%%%%%%%%%%%%
%%%%%%%%%%%% THEOREM: POISSON DISTRIBUTED RETURN TIMES %%%%%%%%%%%%%%%%%%%%%%%%%%%%%%%%%%%%%%%%%%%%%%%
\begin{thrm}\label{thm:main}
Let $(\theta, \mathbb{P}, T_\omega, \mu_\omega, \Gamma)$ be a system satisfying (H\ref{hyp:amb}-H\ref{hyp:dec}), (H\ref{hyp:return}') and (H\ref{hyp:param}).
 
Then: \hspace{-0.5mm}$\forall t {>} 0, \forall n {\ge} 0, \forall (\rho_m)_{m \ge 1} {\searrow} 0 \text{ with }\sum\nolimits_{m \ge 1} {\rho_m}^q {<} \infty \text{ } \text{(for some } {0} {<} {q} {<} q(d_0,d_1,\eta,\beta, \mathfrak{p})\footnotemark\text{)}$\footnotetext{A quantity to be introduced in lemma \ref{lem:variance}.} one has
\begin{equation}\label{eq:mainthmlimit}\displaystyle
 \qquad \mu_\omega(Z^{\omega,\lfloor t / \hat\mu(\Gamma_{\rho_m}) \rfloor}_{\Gamma_{\rho_m}}= n) \stackrel[m \rightarrow \infty]{\mathbb{P}\text{-a.s.}}{\longrightarrow} \operatorname{CPD}_{t \alpha_1,(\lambda_\ell)_\ell}(n), 
\end{equation}where $\operatorname{CPD}_{s, (\lambda_\ell)_{\ell}}$ is the compound Poisson distribution with intensity $s$ and multiplicity distribution $(\lambda_\ell)_{\ell}$.
\end{thrm}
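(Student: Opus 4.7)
The plan is to execute the probabilistic block-approximation scheme announced in the introduction. For $L = \lfloor t/\hat\mu(\Gamma_{\rho_m}) \rfloor$, I would invoke Theorem \ref{thm:approx} to replace, up to a controlled $\omega$-dependent error, the quenched law of $Z^{\omega,L}_{\Gamma_{\rho_m}}$ under $\mu_\omega$ by the law of a sum $\sum_{i=1}^{B_m} W_i^{(m),\omega}$ of $B_m$ independent random variables, each distributed like the hit count on a single block of length $\lfloor L/B_m\rfloor$, separated by short gap intervals. The block count $B_m$ must be tuned so that (i) blocks are short enough that $\hat\mu(Z^{\lfloor L/B_m\rfloor}_{\Gamma_{\rho_m}} > 0) \to 0$, (ii) there are enough gap sites between consecutive blocks to exploit (H\ref{hyp:annealdec})--(H\ref{hyp:quenchdec}), and (iii) the total mean $B_m \cdot \hat\mu(Z^{\lfloor L/B_m\rfloor}_{\Gamma_{\rho_m}} > 0)$ tends to $t\alpha_1$. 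The parametric constraints (H\ref{hyp:param}) are precisely what makes such a choice simultaneously feasible given the polynomial decay rate $\mathfrak{p}$, the dimension exponents $d_0,d_1$, the annulus exponents $\eta,\beta$, and the distortion index $\mathfrak{d}$.

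Next I would identify the limiting distribution of the iid sum. For each block, the probability of at least one hit is of order $t\alpha_1/B_m \to 0$, and conditional on at least one hit, the block hit count has limit distribution $(\lambda_\ell)_\ell$ essentially by the very definitions (\ref{eq:lambdadef})--(\ref{eq:deftildelambdainner}) of the hitting quantities. A standard triangular-array compound-Poisson limit theorem then yields convergence in distribution of $\sum_i W_i^{(m),\omega}$ (under $\hat\mu$) to $\operatorname{CPD}_{t\alpha_1,(\lambda_\ell)_\ell}$; the moment condition $\sum_\ell \ell^3 \lambda_\ell < \infty$ from (H\ref{hyp:hitting}), automatic from (H\ref{hyp:return}') via Theorem \ref{thm:lambdaalpha}, provides the tightness needed to transfer this from characteristic functions to the probability mass function. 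Theorem \ref{thm:lambdaalpha} also supplies the identity $\lambda_\ell = (\alpha_\ell - \alpha_{\ell+1})/\alpha_1$, which reconciles the intensity $t\alpha_1$ with the annealed Kac-type normalization through $\alpha_1 = (\sum_\ell \ell \lambda_\ell)^{-1}$.

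The upgrade from the annealed convergence (over $\omega$) to $\mathbb{P}$-a.s. convergence is the purpose of Lemma \ref{lem:asconv}. The annealed decay (H\ref{hyp:annealdec}) produces a variance estimate for the $\omega$-dependent quantity $\mu_\omega(Z^{\omega,L}_{\Gamma_{\rho_m}} = n)$ around its annealed average $\hat\mu(Z^{L}_{\Gamma_{\rho_m}} = n)$; the summability hypothesis $\sum_m \rho_m^q < \infty$ combined with Chebyshev and Borel--Cantelli then promotes convergence in mean to $\mathbb{P}$-a.s. convergence along the sequence $(\rho_m)$. This Borel--Cantelli step is also what restricts the conclusion to sufficiently fast shrinking sequences rather than to all $\rho \to 0$.

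The main obstacle, conceptually, is the block-approximation Theorem \ref{thm:approx} itself, which I take here as given. Its subtlety is twofold: the short-recurrence contribution inside a block must be reproduced faithfully so that the genuine cluster distribution $(\lambda_\ell)_\ell$ emerges rather than a degenerate Poisson with $\lambda_1 = 1$; and the long-range dependence between blocks must be controlled by (H\ref{hyp:quenchdec})--(H\ref{hyp:annealdec}). The target-position hypotheses (H\ref{hyp:quenchedsep})--(H\ref{hyp:quenchedsep2}), together with the hyperbolic cylinder decomposition (H\ref{hyp:hyper}) and the distortion/backward-contraction estimates (H\ref{hyp:dist})--(H\ref{hyp:backcontr}), rule out pathological recurrence from non-expanding cylinders and allow the intra-block cluster statistics to be identified with the $\lambda_\ell$'s. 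Once these ingredients are assembled, reading the probability mass function off \eqref{eq:cpddensity} gives \eqref{eq:mainthmlimit}.
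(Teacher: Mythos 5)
Your outline follows the paper's own strategy: block decomposition via Theorem \ref{thm:approx}, error control from the hypotheses, identification of the leading term through the $\lambda_\ell$'s, and a Chebyshev/Borel--Cantelli step to get the quenched a.s.\ statement along $(\rho_m)$. Two points, however, are imprecise in ways that hide the actual mechanism.

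First, the self-averaging object is not the single quenched probability $\mu_\omega(Z^{\omega,\lfloor t/\hat\mu(\Gamma_{\rho_m})\rfloor}_{\Gamma_{\rho_m}}=n)$: there is no direct variance bound for that quantity around its annealed average (proving its concentration is essentially the theorem itself). What Lemma \ref{lem:variance} controls is the variance of the block sum $\mathfrak{W}_\rho(\omega)=\sum_{j=0}^{N'-1}\mu_\omega(Z^\omega_j=n)$, which concentrates precisely because the number of blocks $N'$ diverges and distinct blocks decorrelate in $\omega$ by (H\ref{hyp:annealdec}). The a.s.\ convergence of the full quenched law is then assembled from two separate facts: the approximation error of Theorem \ref{thm:approx} vanishes a.s., and the pmf of the independent sum is an explicit continuous function (a finite combinatorial expansion, not a characteristic-function argument) of the block sums $\sum_j\mu_\omega(Z^\omega_j=n_i)$ and $\sum_j\mu_\omega(Z^\omega_j\ge 1)$, each of which converges a.s.\ by Lemma \ref{lem:asconv}. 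Note also that convergence of integer-valued laws gives convergence of point masses for free; the third-moment condition is not used for any ``tightness transfer.''

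Second, your tuning of a block count $B_m$ depending on $m$, with block length $\lfloor L/B_m\rfloor$ possibly growing with $m$, clashes with the definition of the $\lambda_\ell$'s, which are iterated limits of the form $\lim_{L\to\infty}\overline{\varliminf}_{\rho\to 0}$. To identify the per-block cluster distribution with $(\lambda_\ell)_\ell$ one must hold the block length $L$ \emph{fixed} while $m\to\infty$ and only afterwards send $L\to\infty$; a diagonal choice coupling block length to $\rho_m$ would require uniformity in $L$ that the hypotheses do not provide. The paper's double-limit structure (and the corresponding statement of Lemma \ref{lem:asconv}) is exactly the device that resolves this, and your write-up should adopt it explicitly. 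With these two corrections the argument closes as you describe.
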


\begin{remark}
    The quantity $q(d_0,d_1,\eta,\beta, \mathfrak{p})>0$ will be introduced explicitly in lemma \ref{lem:variance}.
\end{remark}

\begin{remark}
     The $\lambda_\ell$'s in the limit of equation (\ref{eq:mainthmlimit}) are those given in equation (\ref{eq:lambdadef}), whose existence follows from (H\ref{hyp:return}') and theorem \ref{thm:lambdaalpha}. 
\end{remark}
\begin{remark}
If the system has exponential asymptotics in (H\ref{hyp:dec}) and (H\ref{hyp:backcontr}), the previous conclusion is still true, but, actually, with fewer parametric conditions being required: instead of (H\ref{hyp:paramincenter})-(H\ref{hyp:parampolynomialseries}), only $\kappa d_0 >1$ is needed.    
\end{remark}

The previous theorem can be strengthened to the following one, which provides an analogous limit theorem for point processes.

\begin{thrm}\label{thm:main2}Let $(\theta, \mathbb{P}, T_\omega, \mu_\omega, \Gamma)$ be a system satisfying (H\ref{hyp:amb}-H\ref{hyp:dec}), (H\ref{hyp:return}') and (H\ref{hyp:param}).
 
Then: \hspace{-0.5mm}$\forall t {>} 0, \forall (\rho_m)_{m \ge 1} {\searrow} 0 \text{ with }\hspace{-0.5mm}\sum\nolimits_{m \ge 1} {\rho_m}^q {<} \infty \text{ } \text{(for some } {0} {<} {q} {<} q(d_0,d_1,\eta,\beta, \mathfrak{p})\text{)}$ one has
\begin{equation}\label{eq:mainthmlimit2}\displaystyle
 \qquad {Y^{\omega,\lfloor t / \hat\mu(\Gamma_{\rho_m}) \rfloor}_{\Gamma_{\rho_m}}}_* \mu_\omega \stackrel[m \rightarrow \infty]{\mathbb{P}\text{-a.s.}}{\longrightarrow} \operatorname{CPPP}_{t \alpha_1,(\lambda_\ell)_\ell} \text{in }\mathcal{P}(\mathfrak{M})\footnotemark, 
\end{equation}\footnotetext{See footnote \ref{note:topologies}.}where $\operatorname{CPPP}_{s, (\lambda_\ell)_{\ell}}$ is the compound Poisson point process with intensity $s$ and multiplicity distribution $(\lambda_\ell)_{\ell}$.
\end{thrm}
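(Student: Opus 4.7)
The plan is to upgrade the marginal convergence of Theorem \ref{thm:main} to vague convergence of random point measures on $[0,1]$, using the Laplace-functional criterion for convergence in $\mathcal{P}(\mathfrak{M})$. Because $[0,1]$ is compact, weak convergence in $\mathcal{P}(\mathfrak{M})$ is equivalent to pointwise convergence of $\nu \mapsto \int \exp(-\nu(g))\, d\mathbb{Q}(\nu)$ at every $g \in C([0,1];\mathbb{R}_{\ge 0})$, and by uniform density of step functions it suffices to test only $g = \sum_{j=1}^k u_j \mathbbm{1}_{(a_{j-1},a_j]}$ with $0 = a_0 < \ldots < a_k = 1$ and $u_j \ge 0$. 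Since
\begin{equation*}
\int g\, dY^{\omega, L_m}_{\Gamma_{\rho_m}}(x) \;=\; \sum_{j=1}^k u_j\, \widetilde Z^{\omega,a_{j-1},a_j}_m(x), \qquad \widetilde Z^{\omega, a, b}_m \;:=\; \sum_{i = \lfloor a L_m\rfloor}^{\lfloor b L_m\rfloor - 1} I_i^{\omega,\rho_m},
\end{equation*}
with $L_m = \lfloor t/\hat\mu(\Gamma_{\rho_m}) \rfloor$, the claim reduces to the joint convergence
\begin{equation*}
\mu_\omega\!\left(\widetilde Z^{\omega, a_{j-1}, a_j}_m = n_j,\; j = 1,\ldots,k\right) \;\stackrel[m\to\infty]{\mathbb{P}\text{-a.s.}}{\longrightarrow}\; \prod_{j=1}^k \operatorname{CPD}_{(a_j-a_{j-1})\,t\alpha_1,\,(\lambda_\ell)_\ell}(n_j),
\end{equation*}
the right-hand product being exactly the finite-dimensional distribution of $\operatorname{CPPP}_{t\alpha_1,(\lambda_\ell)_\ell}$ on the partition $\{(a_{j-1},a_j]\}_{j=1}^k$.

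First I would verify the single-window version: for any $0 \le a < b \le 1$, $\mu_\omega(\widetilde Z^{\omega,a,b}_m = n) \to \operatorname{CPD}_{(b-a)t\alpha_1,(\lambda_\ell)_\ell}(n)$ $\mathbb{P}$-almost surely. The block approximation of Theorem \ref{thm:approx} and the Borel–Cantelli estimate of Lemma \ref{lem:asconv} that drive the proof of Theorem \ref{thm:main} are insensitive to a bounded time-shift of the starting noise: reading $\widetilde Z^{\omega,a,b}_m$ as a standard hit-count on $\lfloor (b-a) t /\hat\mu(\Gamma_{\rho_m})\rfloor$ iterates starting from the shifted noise $\theta^{\lfloor a L_m\rfloor}\omega$ reproduces the marginal statement with intensity rescaled to $(b-a)t\alpha_1$, after identifying the $\lambda_\ell$'s via Theorem \ref{thm:lambdaalpha}. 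Next I would establish the joint convergence through asymptotic independence across windows: in the block approximation each window is mimicked by its own tuple of i.i.d.\ surrogate variables independent from those of the other windows, and the discrepancy between the true joint law and the product of the surrogate marginals is governed by the same annealed decay-of-correlations estimate (H\ref{hyp:annealdec}) used in Theorem \ref{thm:approx}, only now applied to pairs of functions supported on disjoint block-groups. Summing these discrepancies in $m$ and invoking Borel–Cantelli (this is where the summability $\sum_m \rho_m^q < \infty$ is consumed, exactly as in Lemma \ref{lem:asconv}) yields a $\mathbb{P}$-full-measure set on which the joint convergence holds for the given partition.

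The main obstacle is organisational rather than analytic: the $\mathbb{P}$-null set produced by Borel–Cantelli nominally depends on the partition $(a_j)_{j=0}^k$, so one must ensure that a single exceptional set serves all partitions (and hence all continuous test functions $g$). This is handled by first proving convergence on the countable family of step functions with rational breakpoints and non-negative rational heights, intersecting the corresponding full-measure sets, and then extending to arbitrary $g \in C([0,1];\mathbb{R}_{\ge 0})$ via equicontinuity: the pre-limit Laplace functional $g \mapsto \int_M \exp(-\int g\, dY^{\omega,L_m}_{\Gamma_{\rho_m}})\, d\mu_\omega$ is $1$-Lipschitz in $\|g\|_\infty$ uniformly in $m$ (because $\int g\, dY^{\omega,L_m}_{\Gamma_{\rho_m}} \ge 0$ and differences of exponentials are bounded by differences of exponents in this regime). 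Beyond this bookkeeping, no additional dynamical input is required: all analytic work — the block approximation, the Borel–Cantelli control of the $\omega$-errors, and the return-to-hitting bridge — is already packaged in Theorem \ref{thm:approx}, Lemma \ref{lem:asconv} and Theorem \ref{thm:lambdaalpha}, which were used to prove Theorem \ref{thm:main}.
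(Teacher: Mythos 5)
Your proposal is correct and, at its core, follows the same route as the paper: both reduce the point-process statement to joint convergence of hit counts over finitely many disjoint time windows toward a product of compound Poisson distributions, and both obtain that joint convergence by re-running the block approximation of Theorem \ref{thm:approx}, decoupling across windows with decay of correlations, and invoking the Borel--Cantelli machinery of Lemmas \ref{lem:variance}--\ref{lem:asconv}. The only genuine difference is the outer reduction: the paper cites Kallenberg's theorem 4.2 and works directly with finite-dimensional distributions on intervals $[a_j,b_j)$, whereas you go through Laplace functionals, a countable family of rational step functions, and an equicontinuity argument. Your route has the merit of making explicit the issue that the exceptional $\mathbb{P}$-null set a priori depends on the partition --- a point the paper leaves implicit in its appeal to Kallenberg --- at the cost of some extra bookkeeping (and of needing the limit process to charge no fixed boundary points, which holds here since the CPPP intensity is Lebesgue).

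Two slips worth correcting, neither fatal. First, the shift $\lfloor a L_m\rfloor$ of the starting noise is \emph{not} bounded; it tends to infinity with $m$, so you cannot literally invoke the a.s.\ conclusion of Theorem \ref{thm:main} at the moving points $\theta^{\lfloor a L_m\rfloor}\omega$. The correct reason the single-window statement transfers is that the expectation and variance computations in Lemmas \ref{lem:annealedentry} and \ref{lem:variance} use only the $\theta$-invariance of $\mathbb{P}$ and the $S$-invariance of $\hat\mu$, so they apply verbatim to block sums indexed from $\lfloor a L_m\rfloor$ onward; the Borel--Cantelli argument must be rerun for these shifted sums (as the paper implicitly does), not inherited. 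Second, the pre-limit Laplace functional is not $1$-Lipschitz in $\|g\|_\infty$: the bound is $|\exp(-\nu(g_1))-\exp(-\nu(g_2))|\le \nu([0,1])\,\|g_1-g_2\|_\infty$, and the total mass $\nu([0,1])=Z^{\omega,L_m}_{\Gamma_{\rho_m}}$ is unbounded; after integrating against $\mu_\omega$ the Lipschitz constant is $\sum_{i<L_m}\mu_{\theta^i\omega}(\Gamma_{\rho_m}(\theta^i\omega))$, which by Lemma \ref{lem:asconv}(3) tends to $t$ a.s.\ --- still uniform enough for your density argument, but the constant is $t$, not $1$. Finally, a misattribution: the cross-window decoupling uses the quenched decay (H\ref{hyp:quenchdec}) applied after inserting a gap $\Delta L$ (which, as in the paper, requires $a_{j+1}>b_j$ or a small-gap excision for adjacent windows); the annealed decay (H\ref{hyp:annealdec}) enters only through Lemma \ref{lem:variance}, and Theorem \ref{thm:approx} itself uses no dynamical hypotheses at all.
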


\textcolor{white}{ }

\paragraph{\textbf{Structure of the paper.}} The rest of the paper is organized into two parts: 

I) Theory: Until section \ref{sec:proofmainthm} we work to prove theorem \ref{thm:main}.

Section \ref{sec:approxthm} proves theorem \ref{thm:approx}. This result provides the skeleton of the proof of theorem \ref{thm:main}, by approximating the left side of equation (\ref{eq:mainthmlimit}). Denoting it briefly by $\mu_\omega(Z= n)$, one splits $Z$ into equally sized blocks and mimics them with an independency of random variables, whose sum forms $\tilde{Z}$. Theorem \ref{thm:approx} bounds $|\mu_\omega(Z= n)- \mu_\omega(\tilde{Z}= n)|$ by with a sum of long-range components (terms $\mathcal{R}^1$ and $\tilde{\mathcal{R}}^1$, to appear) and short-range components (terms $\mathcal{R}^2$ and $\mathcal{R}^3$, to appear).

Section \ref{sec:proofmainthm} proofs theorem \ref{thm:main}. To estimate long-range errors, it uses weak hyperbolicity features (H\ref{hyp:weakhypplaincyl},H\ref{hyp:bddderiv}), the target uniform inclusion in the adequate set (H\ref{hyp:quenchedsep}), the annulus regularity (H\ref{hyp:annulus}) and quenched decay (H\ref{hyp:quenchdec}). To estimate short-range errors, it uses structure of the covering system (H\ref{hyp:inv}), distortion (H\ref{hyp:dist}), strong hyperbolicity features (H\ref{hyp:backcontr}) and ball regularity (H\ref{hyp:ball}). Notice annealed decay was not yet used.

To control the newly arranged estimates (still carrying some $\omega$-dependency) and to show that $\mu_\omega(\tilde{Z}= n)$ goes to the desired CPD, thus closing the proof, the missing piece is an almost sure convergence result, which allows for the quenched theorem.

This almost sure convergence result is lemma \ref{lem:asconv}, proved in section \ref{sec:BClemmata} after a Borel-Cantelli argument and a variance control (lemma \ref{lem:variance}). The proof of the variance control finally uses the annealed decay of correlations (H\ref{hyp:annealdec}) and the regularity in $\omega$ of maps and targets (H\ref{hyp:lip}).

Finally, theorem \ref{thm:main2} is proved in section \ref{sec:proofmain2}. Although it implies theorem \ref{thm:main}, to make ideas more transparent, we preferred to prove \ref{thm:main} and leverage on this proof to prove theorem \ref{thm:main}. This decision can benefit users who wish to upgrade compound Poison distributions limit theorems into compound Poisson point processes limit theorems.

II) Applications: In section \ref{sec:ex} we consider certain random piecewise expanding one-dimensional systems, casting new light on the well-known deterministic dichotomy between periodic and aperiodic points, their typical extremal index formula $\operatorname{EI}= 1 - 1/ JT^p(\zeta)$, and recovering the geometric case for general Bernoulli-driven systems, but distinct behavior otherwise.

 %%%%%%%%%%%%%%%%%%%%%%%%%%%%%%%%%%%%%%%%%%%%%%
%%%%%%%%%%%%%%%%%%%%%%%%%%%%%%%%%%%%%%%%%%%%%%
%%%%%%%%%%%%%%             SECTION: APPROXIMATION THEOREM
 %%%%%%%%%%%%%%%%%%%%%%%%%%%%%%%%%%%%%%%%%%%%%%
\section{An abstract approximation theorem} \label{sec:approxthm}

The following theorem approximates the probability distribution of an arbitrary sum of binary variables in terms of the distribution of a suitable sum of independent random variables. More precisely, to build the `suitable' independent random variables, one splits the first sum into smaller block-sums, and each of them is distributionally mimicked by a new random variable, with the collection of new ones being taken to be independent.

 %%%%%%%%%%%%%%%%%%%%%%%%%%%%%%%%%%%%%%%%%%%%%%
%%%%%%%%%%%%%%%%%%%%%%%%%%%%%%%%%%%%%%%%%%%%%%
%%%%%%%%%%%%%%             APPROXIMATION HELPER THEOREM
 %%%%%%%%%%%%%%%%%%%%%%%%%%%%%%%%%%%%%%%%%%%%%%

    \begin{thrm} \label{thm:approx} Consider $n \ge 0$, $L \ge n$, $N \in \mathbb{N}_{\ge 3}$ large enough so that $L \le \lfloor \frac{N}{3} \rfloor$, and $(X_i)_{i=0}^{N-1}$ arbitrary $\{0,1\}$-valued random variables on $( \mathcal{X}, \mathscr{X},\mathbb{Q})$. Denote $N':=\frac{N}{L} \in \mathbb{N}_{\ge 3}$\footnote{\label{note:fractional} Although $L$ need not divide $N$, we pretend this is the case, for simplification purposes, i.e. to neglect possible remainder terms associated with the fractional part --- which should not play a role in the asymptotics (of either the error and leading terms).} and $(Z_j)_{j=0}^{N'-1}$ given by $Z_j := \sum_{i=jL}^{(j+1)L-1} X_i$.

Let $(\tilde{Z}_j)_{j=0}^{N'-1}$ be an independency of $\mathbb{N}_{\ge 0}$-valued random variables on $ (X, \mathcal{X},\mathbb{Q})$ satisfying $\tilde{Z}_j \sim Z_j$ ($j=0,\ldots,N'-1$) and $(\tilde{Z}_j)_{j=0}^{N'-1} \perp (Z_j)_{j=0}^{N'-1}$.

Denote $\tilde{W}_a^b := \sum_{j=a}^{b} \tilde{Z}_j$ ($0 \le a \le b \le N'-1$) and $\tilde{W} := \tilde{W}_0^{N'-1}$. Similarly notation without $\sim$'s is adopted, in which case $W$ coincides with $\sum_{i=0}^{N-1} X_i$.

Then, for all $\Delta \in [1,N']$:
\begin{equation*}
    \left| \mathbb{Q} (W =n ) - \mathbb{Q} ( \tilde{W} =n  ) \right|
    \lesssim \tilde{\mathcal{R}}^1(N,L,\Delta) + \mathcal{R}^1(N,L,\Delta) + \mathcal{R}^2(N,L,\Delta) + \mathcal{R}^3(N,L,\Delta),
\end{equation*}where
\begin{equation*}
    \tilde{\mathcal{R}}^1(N,L,\Delta)= \sum_{j=0}^{N'-1} \max_{q \in [0,n]} \left| \mathbb{Q}(Z_{j} \ge 1) \mathbb{Q}(W_{j+\Delta}^{N'-1}=q) - \mathbb{Q}(Z_j\ge 1, W_{j+\Delta}^{N'-1}=q) \right|,
\end{equation*}
\begin{equation*}
    \mathcal{R}^1 (N,L,\Delta) {=} \!\sum_{j=0}^{N'-1} \max_{q \in [1,n]}  \sum_{u=1}^{q} \left| \mathbb{Q}\!\left(\!Z_j {=}{u}, W_{j+\Delta}^{N'-1}  {=} {q}{-}{u} \right) {-} \mathbb{Q}\Big(\! Z_j {=}{u} \! \Big)  \mathbb{Q}\!\left( W_{j+\Delta}^{N'-1} {=} {q}{-}{u} \right) \right|,
\end{equation*}
\begin{equation*}
    \mathcal{R}^2(N,L,\Delta) = \sum_{j=0}^{N'-1} \mathbb{Q}\left( Z_j \ge 1, W_{j+1}^{j + \Delta -1}  \ge 1 \right)\text{ and}
\end{equation*}
\begin{equation*}
    \mathcal{R}^3(N,L,\Delta) = \sum_{i=0}^N \sum_{q=0 \vee (i- {\Delta} L)}^i \mathbb{Q}(X_i=1) \mathbb{Q}(X_q=1),
\end{equation*}with the convention that, for $b > a$, $W_b^a \equiv 0$ and $\mathbb{Q}(W_b^a \ge 1) = 0$.
\end{thrm}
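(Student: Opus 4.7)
The proof is a telescoping substitution, followed by a per-block comparison using the independence structure, a gap insertion of $\Delta$ blocks, and a final case split on the value of the block sum $Z_k$ to identify the four error types in the statement.

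First I would define the hybrid sums $V_k = \sum_{j=0}^{k-1}\tilde Z_j + \sum_{j=k}^{N'-1}Z_j$ for $k=0,\dots,N'$, so that $V_0=W$ and $V_{N'}=\tilde W$, and write $\mathbb{Q}(W{=}n)-\mathbb{Q}(\tilde W{=}n)=\sum_{k=0}^{N'-1}[\mathbb{Q}(V_k{=}n)-\mathbb{Q}(V_{k+1}{=}n)]$. Splitting $V_k=A_k+Z_k+B_k$ and $V_{k+1}=A_k+\tilde Z_k+B_k$ with $A_k=\sum_{j<k}\tilde Z_j$ and $B_k=W_{k+1}^{N'-1}$, the hypotheses $(\tilde Z_j)_j\perp(Z_j)_j$ and the independency of the $\tilde Z_j$'s make $A_k$ independent of the triple $(Z_k,\tilde Z_k,B_k)$; conditioning on $A_k=a$ and taking absolute values bounds the $k$-th summand by $\max_{q\in[0,n]}|\mathbb{Q}(Z_k+B_k=q)-\mathbb{Q}(\tilde Z_k+B_k=q)|$. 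Using $\tilde Z_k\perp B_k$ and $\tilde Z_k\sim Z_k$, each fixed-$q$ difference expands as
\[
\sum_{u=0}^q\bigl[\mathbb{Q}(Z_k{=}u,B_k{=}q{-}u)-\mathbb{Q}(Z_k{=}u)\mathbb{Q}(B_k{=}q{-}u)\bigr].
\]

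Next I would perform the gap insertion: replace $B_k$ by $B_k' = W_{k+\Delta}^{N'-1}$, noting $B_k-B_k'=W_{k+1}^{k+\Delta-1}$, so that every discrepancy $\{B_k\neq B_k'\}$ sits inside $\{W_{k+1}^{k+\Delta-1}\ge1\}$. For the $u=0$ slot, the identity $\mathbb{Q}(Z_k{=}0,\,\cdot\,)=\mathbb{Q}(\,\cdot\,)-\mathbb{Q}(Z_k\ge1,\,\cdot\,)$ collapses that summand to $\mathbb{Q}(Z_k\ge1)\mathbb{Q}(B_k{=}q)-\mathbb{Q}(Z_k\ge1,B_k{=}q)$; swapping $B_k$ for $B_k'$ yields exactly the $\tilde{\mathcal R}^1$-summand plus two error pieces: a \emph{joint} piece $\le\mathbb{Q}(Z_k\ge1,W_{k+1}^{k+\Delta-1}\ge1)$ and a \emph{product} piece $\le\mathbb{Q}(Z_k\ge1)\mathbb{Q}(W_{k+1}^{k+\Delta-1}\ge1)$. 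The $u\ge1$ slots are handled in the same way; after summing $u\in[1,q]$ and maximizing over $q\in[1,n]$ the leading part reproduces the $\mathcal R^1$-summand and the two error contributions are again of the same joint/product type.

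Finally, I would sum the per-$k$ estimates over $k$. The \emph{joint} errors aggregate directly into $\mathcal R^2$. For the \emph{product} errors, union bounds give $\mathbb{Q}(Z_k\ge1)\le\sum_{i\in[kL,(k+1)L)}\mathbb{Q}(X_i{=}1)$ and $\mathbb{Q}(W_{k+1}^{k+\Delta-1}\ge1)\le\sum_{i'\in[(k+1)L,(k+\Delta)L)}\mathbb{Q}(X_{i'}{=}1)$, and the resulting double sum counts each ordered pair $(i,i')$ with $0<i'-i<\Delta L$ at most once, hence is majorized by $\mathcal R^3$. The main obstacle I anticipate is the bookkeeping in the gap-insertion step: one has to be careful that the leading terms take exactly the form prescribed by $\tilde{\mathcal R}^1$ and $\mathcal R^1$ (with the $\max_q$ placed \emph{after} the inner $\sum_u$), that the boundary case $k+\Delta>N'-1$ is absorbed by the convention $W_b^a\equiv0$ for $b>a$, and that the conditioning on $A_k$ and its cancellation in the telescoping is carried out uniformly in $a$.
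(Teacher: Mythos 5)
Your proposal is correct and follows essentially the same route as the paper: the same telescoping over hybrid sums, conditioning away the $\tilde W_0^{j-1}$ prefix, singling out $u=0$ to produce the $\tilde{\mathcal R}^1$-type term, inserting the gap $\Delta$ by the triangle inequality, and sorting the errors into joint pieces ($\mathcal R^2$) and product pieces ($\mathcal R^3$). The only cosmetic difference is that you bound the product-piece discrepancy $|\mathbb{Q}(W_{j+1}^{N'-1}{=}q')-\mathbb{Q}(W_{j+\Delta}^{N'-1}{=}q')|$ directly via the symmetric-difference inclusion into $\{W_{j+1}^{j+\Delta-1}\ge 1\}$ plus a union bound, whereas the paper telescopes through $\Delta-1$ one-block swaps; both yield the same $\mathcal R^3$.
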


\begin{proof}
Using a telescopic sum and the given independence, one has 
\begin{equation*}
    \left| \mathbb{Q}(W =n) - \mathbb{Q} (\tilde{W} =n ) \right| \le \sum_{j=0}^{N'-1} \left| \mathbb{Q}(\tilde{W}_{0}^{j-1}+W_{j}^{N'-1}=n)-\mathbb{Q}(\tilde{W}_{0}^{j}+W_{j+1}^{N'-1}=n) \right|
\end{equation*}
\begin{equation*}
    \le \sum_{j=0}^{N'-1} \sum_{l=0}^n \mathbb{Q}(\tilde{W}_{0}^{j-1}=l)\left| \mathbb{Q}(W_{j}^{N'-1}=n-l)-\mathbb{Q}(\tilde{Z}_{j}+W_{j+1}^{N'-1}=n-l) \right|.
\end{equation*}

We now estimate
\begin{equation*}
\left| \mathbb{Q}(W_{j}^{N'-1}=q)-\mathbb{Q}(\tilde{Z}_{j}+W_{j+1}^{N'-1}=q) \right|
\end{equation*}
\begin{equation*}
 \le \sum_{u=0}^q \left| \mathbb{Q}(Z_j=u, W_{j+1}^{N'-1}=q-u)-\mathbb{Q}(\tilde{Z}_{j}=u,W_{j+1}^{N'-1}=q-u)   \right|
\end{equation*}
\begin{equation*}
    = \sum_{u=0}^q \left| \mathbb{Q}(Z_j=u, W_{j+1}^{N'-1}=q-u)-\mathbb{Q}(Z_{j}=u)\mathbb{Q}(W_{j+1}^{N'-1}=q-u)   \right| =: \sum_{u=0}^q | \mathcal{R}_j(q,u) |.
\end{equation*}

We single out $u = 0$ from the previous sum,
\begin{eqnarray*}
    |\mathcal{R}_j(q,0)| & = & \left| \mathbb{Q}(Z_j=0, W_{j+1}^{N'-1}=q)-\mathbb{Q}(Z_{j}=0)\mathbb{Q}(W_{j+1}^{N'-1}=q)   \right| \\
    & = & \Big| \left( \mathbb{Q}(W_{j+1}^{N'-1}=q) - \mathbb{Q}(Z_j\ge 1, W_{j+1}^{N'-1}=q) \right) \\
    & & \hspace{7mm} - \left( \mathbb{Q}(W_{j+1}^{N'-1}=q) - \mathbb{Q}(Z_{j} \ge 1) \mathbb{Q}(W_{j+1}^{N'-1}=q) \right)   \Big|\\
    & = &  \left| \mathbb{Q}(Z_{j} \ge 1) \mathbb{Q}(W_{j+1}^{N'-1}=q) - \mathbb{Q}(Z_j\ge 1, W_{j+1}^{N'-1}=q) \right|.
\end{eqnarray*}

It follows that 
\begin{equation*}
    \hspace{-70mm} \left| \mathbb{Q}(W =n) - \mathbb{Q} (\tilde{W} =n ) \right| \le \displaystyle \sum_{j=0}^{N'-1} \sum_{q=0}^{n} \sum_{u=0}^{q} |\mathcal{R}_j(q,u)|
\end{equation*}
\begin{equation*}
\begin{tabular}{@{}c  l}
     $\le$ & $\displaystyle \hspace{-1mm}n \sum_{j=0}^{N'-1} \max_{q \in [0,n]} \left| \mathbb{Q}(Z_{j} \ge 1) \mathbb{Q}(W_{j+1}^{N'-1}=q) {-} \mathbb{Q}(Z_j\ge 1, W_{j+1}^{N'-1}=q) \right| {+} \sum_{j=0}^{N'-1} \sum_{q=0}^{n} \sum_{u=1}^{q} |\mathcal{R}_j(q,u)|$\\
     $\lesssim$ & $\displaystyle \hspace{-1mm} \sum_{j=0}^{N'-1} \max_{q \in [0,n]} \left| \mathbb{Q}(Z_{j} \ge 1) \mathbb{Q}(W_{j+1}^{N'-1}=q) {-} \mathbb{Q}(Z_j\ge 1, W_{j+1}^{N'-1}=q) \right| {+} \sum_{j=0}^{N'-1} \sum_{q=0}^{n} \sum_{u=1}^{q} |\mathcal{R}_j(q,u)|$.
\end{tabular} 
\end{equation*}

The first summation will be kept on hold. We deal with the second one now. 

For $u=1,\ldots,q$, we expand $|\mathcal{R}_j(q,u)|$ by including intermediate terms with a time gap $\Delta$ and applying the triangular inequality, as follows:
\begin{eqnarray*}
    | \mathcal{R}_j(q,u) | & \le & \Big| \mathbb{Q}(Z_j=u, W_{j+1}^{N'-1}=q-u) - \mathbb{Q}(Z_j=u, W_{j+\Delta}^{N'-1}=q-u) \Big| \\
    & + & \Big| \mathbb{Q}(Z_j=u, W_{j+\Delta}^{N'-1}=q-u) - \mathbb{Q}(Z_j=u) \mathbb{Q}(W_{j+\Delta}^{N'-1}=q-u) \Big| \\
    & + & \Big| \mathbb{Q}(Z_j=u) \mathbb{Q}(W_{j+\Delta}^{N'-1}=q-u) - \mathbb{Q}(Z_{j}=u)\mathbb{Q}(W_{j+1}^{N'-1}=q-u)   \Big|,
\end{eqnarray*}where the entries in the RHS are denoted, respectively, by $|\mathcal{R}_j^2(q,u)|$, $|\mathcal{R}_j^1(q,u)|$ and $|\mathcal{R}_j^3(q,u)|$ (note the unusual order).

Then the sum of the following three terms bounds the later triple sum.

First:
\begin{equation*}
\sum_{j=0}^{N'-1} \sum_{q=0}^{n} \sum_{u=1}^{q} |\mathcal{R}_j^1(q,u)| \lesssim   \sum_{j=0}^{N'-1} \max_{q \in [1,n]} \sum_{u=1}^{q} |\mathcal{R}_j^1(q,u)| = \mathcal{R}^1(N,L,\Delta).
\end{equation*}
%where $n$ is incorporated into $\lesssim$ because it is a constant in the sense that no limits in $n$ will be taken at all. 

Second:
\begin{equation*}
\sum_{j=0}^{N'-1} \sum_{q=0}^{n} \sum_{u=1}^{q} |\mathcal{R}_j^2(q,u)| \lesssim  \sum_{j=0}^{N'-1} \max_{q \in [1,n]} \sum_{u=1}^{q} |\mathcal{R}_j^2(q,u)| 
\end{equation*}
\begin{equation*}
    \lesssim \sum_{j=0}^{N'-1} \mathbb{Q}(Z_j \ge 1, W_{j+1}^{j+\Delta-1} \ge 1) = \mathcal{R}^2(N,L,\Delta),
\end{equation*}where the step used that
\begin{equation*}
    A_u := \{Z_j=u, W_{j+1}^{N'-1}=q-u\}, B_u := \{Z_j=u, W_{j+\Delta}^{N'-1}=q-u\}
\end{equation*}
\begin{equation*}
   \Rightarrow A_u \setminus B_u, B_u \setminus A_u \subset \{Z_j=u, W_{j+1}^{j+\Delta-1}\ge 1\}
\end{equation*}
\begin{eqnarray*}
    \Rightarrow \sum_{u=1}^{q} |\mathcal{R}_j^2(q,u)| & = & \sum_{u=1}^{q} \left| \mathbb{Q}(A_u) - \mathbb{Q}(B_u) \right| \le  \sum_{u=1}^{q} \mathbb{Q}(Z_j=u, W_{j+1}^{j+\Delta-1} \ge 1)  \\
    & \le & \mathbb{Q}(Z_j \ge 1, W_{j+1}^{j+\Delta-1} \ge 1).
\end{eqnarray*}

Third:
\begin{equation*}
    \sum_{j=0}^{N'-1} \sum_{q=0}^{n} \sum_{u=1}^{q} |\mathcal{R}_j^3(q,u)| \lesssim \sum_{j=0}^{N'-1} \max_{q \in [1,n]} \sum_{u=1}^{q} |\mathcal{R}_j^3(q,u)| 
\end{equation*}
\begin{equation*}
    \lesssim  \sum_{j=0}^{N'-1} \sum_{l=jL}^{(j+1)L-1} \sum_{i=(j+1)L}^{(j+\Delta+1)L-1} \mathbb{Q}(X_i{=}1)  \mathbb{Q}(X_l{=}1) =  \sum_{i=0}^{N+\Delta L + L} \sum_{l = 0 \vee (i-L - \Delta L) }^{i-L}  \mathbb{Q}(X_l{=}1) \mathbb{Q}(X_i{=}1)
\end{equation*}
\begin{equation*}
     \le  \sum_{i=0}^N \sum_{l=0 \vee (i - {\Delta} L) }^{i} \mathbb{Q}(X_l=1) \mathbb{Q}(X_i=1) = \mathcal{R}^3(N,L, \Delta),
 \end{equation*}where the second $\lesssim$ step used the following: (with $q'=q-u$)
\begin{equation*}
\begin{tabular}{r c l}
    $\mathbb{Q}(W_{j+1}^{N'-1}{=}{q}')$ & $=$ & $\mathbb{Q}(Z_{j+1}{\ge} {1}, W_{j+1}^{N'_L-1}{=}{q}') {+} \mathbb{Q}(Z_{j+1} {=} {0}, W_{j+1}^{N'-1}{=}{q}')$\\
    $\mathbb{Q}(Z_{j+1} = 0, W_{j+1}^{N'-1}=q')$ & $=$ &$\mathbb{Q}(Z_{j+1} = 0, W_{j+2}^{N'-1}=q')$\\
    & $=$ & $\mathbb{Q}(W_{j+2}^{N'-1}=q') - \mathbb{Q}(Z_{j+1} \ge 1, W_{j+2}^{N'-1}=q')$\\
    $\Rightarrow \substack{\displaystyle |\mathbb{Q}(W_{j+1}^{N'-1}=q') \\ \displaystyle \hspace{10mm}-  \mathbb{Q}(W_{j+2}^{N'-1}=q')|}$ & $=$ &$\hspace{-6mm}\substack{\displaystyle |\mathbb{Q}(Z_{j+1}\ge 1, W_{j+1}^{N'-1}=q') \\ \displaystyle \hspace{10mm}
     - \mathbb{Q}(Z_{j+1} \ge 1, W_{j+2}^{N'-1}=q')|}$ 
\end{tabular}
\end{equation*}
but, with $A:=\{Z_{j+1} \ge 1,W_{j+1}^{N'-1}=q'\}$ and $B:=\{Z_{j+1} \ge 1, W_{j+2}^{N'-1}=q'\}$, one has $A \setminus B, B \setminus A \subset \{Z_{j+1} \ge 1\}$, implying 
\begin{equation*}
|\mathbb{Q}(W_{j+1}^{N'-1}=q')  -  \mathbb{Q}(W_{j+2}^{N'-1}=q')| \le \mathbb{Q}(Z_{j+1} \ge 1)
\end{equation*}
\begin{equation*}
    \Rightarrow |\mathbb{Q}(W_{j+l}^{N'-1}=q')  -  \mathbb{Q}(W_{j+l+1}^{N'-1}=q')| \le \mathbb{Q}(Z_{j+l} \ge 1) \le \sum_{i=(j+l)L}^{(j+l+1)L-1} \mathbb{Q}(X_i=1)
\end{equation*}
\begin{equation*}
    \Rightarrow |\mathbb{Q}(W_{j+1}^{N'-1}=q')  -  \mathbb{Q}(W_{j+\Delta}^{N'-1}=q')| \le \sum_{l=1}^{\Delta-1}\mathbb{Q}(Z_{j+l} \ge 1) \le \sum_{i=(j+1)L}^{(j+\Delta)L-1} \mathbb{Q}(X_i=1)
\end{equation*}
\begin{eqnarray*}
    \Rightarrow \sum_{u=1}^{q} |\mathcal{R}^3_j(q,u)| & \le & \sum_{u=1}^{q} \mathbb{Q}(Z_j=u) \sum_{i=(j+1)L}^{(j+\Delta)L-1} \mathbb{Q}(X_i=1)\\
    & \le & \sum_{l=jL}^{(j+1)L-1} \sum_{i=(j+1)L}^{(j+\Delta)L-1} \mathbb{Q}(X_l=1) \mathbb{Q}(X_i=1).
\end{eqnarray*}

Now we should deal with the summation we left on hold, coming from the singled-out term with $u=0$, namely,
\begin{equation*}
    \sum_{j=0}^{N'-1} \max_{q \in [0,n]} \left| \mathbb{Q}(Z_{j} \ge 1) \mathbb{Q}(W_{j+1}^{N'-1}=q) - \mathbb{Q}(Z_j\ge 1, W_{j+1}^{N'-1}=q) \right|.
\end{equation*}

Using an analogous triangular inequality trick, by adding two mixed terms that have a gap $\Delta$, and organizing them in the same order used before, one verifies that the second term is bounded by $\mathcal{R}^2(N,L,\Delta)$ and the third one is bounded by $\mathcal{R}^3(N,L,\Delta)$. So it suffices to account for the left over term
\begin{equation*}
    \tilde{\mathcal{R}}^1(N,L,\Delta) = \sum_{j=0}^{N'-1} \max_{q \in [0,n]} \left| \mathbb{Q}(Z_{j} \ge 1) \mathbb{Q}(W_{j+\Delta}^{N'-1}=q) - \mathbb{Q}(Z_j\ge 1, W_{j+\Delta}^{N'-1}=q) \right|,
\end{equation*}as desired.
 \end{proof}

\section{Borel-Cantelli type lemmata}\label{sec:BClemmata}

The objective of this section is its final lemma \ref{lem:asconv}, which provides the almost sure convergence needed to back the quenched result in the proof of theorem \ref{thm:main}. This lemma and its proof strategy was inspired in \cite{rousseau2014exponential} (lemma 9). To implement the said proof, a Borel-Cantelli argument is used with expectation control given by lemma \ref{lem:annealedentry} and variance control given by lemma \ref{lem:variance}.

\begin{lem}\label{lem:annealedentry} Let $(\theta, \mathbb{P}, T_\omega, \mu_\omega, \Gamma)$ be a system satisfying (H\ref{hyp:return}') (and so (H\ref{hyp:hitting}), by theorem \ref{thm:lambdaalpha}). Then:
\begin{equation}\label{eq:annealedentry}
\lim\limits_{L \to \infty} \overline{\varliminf\limits_{\rho \to 0}} 
\frac{\hat\mu(Z^L_{\Gamma_\rho} \ge 1)}{L\hat\mu(\Gamma_\rho)}
= ({\textstyle \sum_{\ell=1}^\infty \ell \lambda_\ell})^{-1} =\alpha_1  
\end{equation}\label{eq:annealedentry2}
and
\begin{equation}
\lim\limits_{L \to \infty} \overline{\varliminf\limits_{\rho \to 0}} \frac{\hat\mu(Z^L_{\Gamma_\rho} = n)}{L \hat\mu(\Gamma_\rho)}  = ({\textstyle \sum_{\ell=1}^\infty \ell \lambda_\ell})^{-1} \lambda_n
=\alpha_1\lambda_n  \text{ }(n\ge 1)   
\end{equation}
\end{lem}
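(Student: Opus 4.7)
The plan is to establish the first limit (\ref{eq:annealedentry}) via a clean identity obtained by partitioning $\{Z^L_{\Gamma_\rho}\ge 1\}$ according to the position of the \emph{last} hit, and then to derive the second limit (\ref{eq:annealedentry2}) by factoring. Throughout I use that, by theorem \ref{thm:lambdaalpha}, (H\ref{hyp:return}') implies (H\ref{hyp:hitting}), and in particular $\lambda_n(L,\rho)\to\lambda_n$ in the iterated sense for each $n\ge 1$.

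For the first limit, decompose $\{Z^L_{\Gamma_\rho}\ge 1\}=\bigsqcup_{i=0}^{L-1}\{I_i^\rho=1,\,I_{i+1}^\rho=\cdots=I_{L-1}^\rho=0\}$. By $S$-invariance $\hat\mu(B)=\hat\mu(S^{-i}(B))$, each summand has the same $\hat\mu$-measure as $\{I_0^\rho=1,\,I_1^\rho=\cdots=I_{L-1-i}^\rho=0\}$. Using the identification $\{\exists j\in[1,L-1-i]:\,I_j^\rho=1\}\cap\{I_0^\rho=1\}=\{I_0^\rho=1,\,Z^{L-i}_{\Gamma_\rho}\ge 2\}$, this equals $\hat\mu(\Gamma_\rho)\bigl[1-\hat\alpha_2(L-i,\rho)\bigr]$. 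Summing over $i$ and relabeling $L'=L-i$ yields the clean identity
\begin{equation*}
\frac{\hat\mu(Z^L_{\Gamma_\rho}\ge 1)}{L\,\hat\mu(\Gamma_\rho)}\;=\;1-\frac{1}{L}\sum_{L'=1}^L\hat\alpha_2(L',\rho).
\end{equation*}

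The first limit now reduces to showing that this Cesaro average has iterated limit $\hat\alpha_2$, whereupon $1-\hat\alpha_2=\hat\alpha_1-\hat\alpha_2=\alpha_1$ gives (\ref{eq:annealedentry}). Since $L\mapsto\hat\alpha_2(L,\rho)$ is non-decreasing, both $\varlimsup_{\rho\to 0}\hat\alpha_2(L,\rho)$ and $\varliminf_{\rho\to 0}\hat\alpha_2(L,\rho)$ are non-decreasing in $L$ and, by (H\ref{hyp:return}'), both converge to $\hat\alpha_2$. Using subadditivity of $\varlimsup_\rho$ and superadditivity of $\varliminf_\rho$ for finite sums, one sandwiches the $\varliminf_\rho$ and $\varlimsup_\rho$ of $\frac{1}{L}\sum_{L'=1}^L\hat\alpha_2(L',\rho)$ between Cesaro averages of these two monotone sequences, both of which converge to $\hat\alpha_2$ as $L\to\infty$ by the classical Cesaro theorem.

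For the second limit, write
\begin{equation*}
\frac{\hat\mu(Z^L_{\Gamma_\rho}=n)}{L\,\hat\mu(\Gamma_\rho)}\;=\;\lambda_n(L,\rho)\cdot\frac{\hat\mu(Z^L_{\Gamma_\rho}\ge 1)}{L\,\hat\mu(\Gamma_\rho)}.
\end{equation*}
Both factors are non-negative, with iterated limits $\lambda_n$ (by (H\ref{hyp:hitting})) and $\alpha_1$ (by the first step), respectively. Multiplicativity of iterated limits for non-negative sequences---obtained from $\varlimsup_\rho(ab)\le\varlimsup_\rho a\cdot\varlimsup_\rho b$ and the analogous $\varliminf$ inequality, together with the sandwich argument in the outer $L$-limit---yields the product $\alpha_1\lambda_n$, proving (\ref{eq:annealedentry2}).

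The main technical point is the Cesaro step in the first limit: the iterated operator $\lim_L\overline{\varliminf_\rho}$ does not commute automatically with a sum whose length depends on $L$, so one must carefully exploit monotonicity in $L$ of $\hat\alpha_2(L,\rho)$ to keep the $\varlimsup_\rho/\varliminf_\rho$ manipulations coherent; everything else is algebraic bookkeeping from the partition identity.
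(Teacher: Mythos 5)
Your proof is correct, and for the first limit it takes a genuinely different route from the paper's. You partition $\{Z^L_{\Gamma_\rho}\ge 1\}$ by the time of the \emph{last} hit and use $S$-invariance to get the exact identity
\begin{equation*}
\frac{\hat\mu(Z^L_{\Gamma_\rho}\ge 1)}{L\,\hat\mu(\Gamma_\rho)}=1-\frac{1}{L}\sum_{L'=1}^{L}\hat\alpha_2(L',\rho),
\end{equation*}
after which only a Cesaro argument (legitimized by the $L$-monotonicity of $\hat\alpha_2(\cdot,\rho)$ and the sandwich between $\frac1L\sum\stackrel{-}{\hat\alpha}_2(L')$ and $\frac1L\sum\stackrel{+}{\hat\alpha}_2(L')$) is needed, giving $1-\hat\alpha_2=\hat\alpha_1-\hat\alpha_2=\alpha_1$ since $\hat\alpha_1(L,\rho)\equiv 1$. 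The paper instead writes $\hat\mu(Z^L_{\Gamma_\rho}\ge 1)=L\hat\mu(\Gamma_\rho)-\sum_{\ell}\ell\,\hat\mu(Z^L_{\Gamma_\rho}=\ell+1)$ by an overcounting-and-correction argument, truncates the sum at a level $\ell_0(\epsilon)$, controls the finite part via convergence of the $\lambda_\ell(L,\rho)$ and the tail via a first-hitting-time decomposition and the summability $\sum\ell\hat\alpha_\ell<\infty$, and then solves a self-referential inequality to land directly on $(\sum_\ell \ell\lambda_\ell)^{-1}$; the identification with $\alpha_1$ then comes from theorem~\ref{thm:lambdaalpha}. Your route arrives at $\alpha_1$ first and borrows the identity $\alpha_1=(\sum_\ell\ell\lambda_\ell)^{-1}$ from the remark following theorem~\ref{thm:lambdaalpha}, which is available under (H\ref{hyp:return}'), so nothing is lost; your argument is shorter, avoids the $\epsilon$-bookkeeping and the tail estimate entirely, and only uses the single quantity $\hat\alpha_2$ rather than the whole family. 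For the second limit your factorization through $\lambda_n(L,\rho)$ and the product of iterated limits of nonnegative bounded quantities is exactly the paper's argument.
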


\begin{proof}
Using (H\ref{hyp:return}') (for the following items (i.b,ii)) and (H\ref{hyp:hitting}) (items (i.a,iii-iv)), it holds that: $\forall \epsilon >0$

\begin{enumerate}[label=\roman*)]
    \item $\exists \ell_0(\epsilon) \ge 1$ so that \begin{enumerate}[label=\alph*)]
        \item$\sum_{\ell=\ell_0(\epsilon)}^\infty \ell^3 \lambda_\ell \le \epsilon, $
        \item $\forall L \ge 1$: $\sum\limits_{\ell=\ell_0(\epsilon)}^\infty {\ell} \hspace{-1mm} \stackrel[]{+}{\hat\alpha} \hspace{-3mm}\phantom{\lambda}_\ell (L) \le \sum\limits_{\ell=\ell_0(\epsilon)}^\infty {\ell}  \hat\alpha_\ell \le \epsilon.$
        %\item$\forall \omega \in \Omega, \forall L \ge 1$: $ \sum\limits_{\ell=\ell_0(\epsilon)}^\infty {\ell} \hspace{-1mm} \stackrel[]{+}{\hat\alpha} \hspace{-3mm}\phantom{\lambda}^\omega_\ell (L)  \le \sum\limits_{\ell=\ell_0(\epsilon)}^\infty {\ell}  \hat\alpha^\omega_\ell \le \epsilon.$
    \end{enumerate} 
    \item $\forall L \ge 1, \exists \rho_1(\epsilon,L), \forall \rho \le \rho_1(\epsilon,L)$: 
    \begin{equation*}
        \stackrel[]{-}{\hat\alpha} \hspace{-3.5mm}\phantom{\lambda}_\ell (L)  - \epsilon/(L^2) \le \hat\alpha_\ell(L,\rho) \le   \stackrel[]{+}{\hat\alpha} \hspace{-3.5mm}\phantom{\lambda}_\ell (L)  + \epsilon/(L^2) \text{ } (\forall \ell=1,\ldots,L)
    \end{equation*}
    \begin{equation*}
        \Rightarrow \sum_{\ell=\ell_0(\epsilon)}^{L}  \ell \hat\alpha_\ell(L,\rho) \le  \sum_{\ell=\ell_0(\epsilon)}^{L} \ell \left(\stackrel[]{+}{\hat\alpha} \hspace{-3.5mm}\phantom{\lambda}_\ell (L)  + \epsilon/(L^2) \right) \le 2\epsilon \text{ by (i).}
    \end{equation*}
    %\item $\forall L \ge 1, \exists \rho_1(\epsilon,L), \forall \omega \in \Omega, \forall \rho \le \rho_1(\epsilon,L)$: 
    %\begin{equation*}
    %    \stackrel[]{-}{\hat\alpha} \hspace{-3.5mm}\phantom{\lambda}^\omega_\ell (L)  - \epsilon/(L+1)^2 \le \hat\alpha^\omega_\ell(L,\rho) \le   \stackrel[]{+}{\hat\alpha} \hspace{-3.5mm}\phantom{\lambda}^\omega_\ell (L)  + \epsilon/(L+1)^2 \text{ } (\forall \ell=1,\ldots,L+1)
    %\end{equation*}
    %\begin{equation*}
    %    \Rightarrow \sum_{\ell=\ell_0(\epsilon)}^{L+1}  \ell \hat\alpha^\omega_\ell(L,\rho) \le  \sum_{\ell=\ell_0(\epsilon)}^{L+1} \ell \left(\stackrel[]{+}{\hat\alpha} \hspace{-3.5mm}\phantom{\lambda}^\omega_\ell (L)  + \epsilon/(L+1)^2 \right) \le 2\epsilon \text{ by (i).}
    %\end{equation*}
    \item $\forall L \ge 1, \exists \rho_3(\epsilon,L), \forall \rho \le \rho_3(\epsilon,L)$:
    \begin{equation*}
        \stackrel[]{-}{\lambda} \hspace{-3.5mm}\phantom{\lambda}_\ell (L)  - \epsilon/(\ell_0(\epsilon))^2 \le \lambda_\ell(L,\rho) \le   \stackrel[]{+}{\lambda} \hspace{-3.5mm}\phantom{\lambda}_\ell (L)  + \epsilon/(\ell_0(\epsilon))^2 \text{ } (\forall \ell=1,\ldots,\ell_0(\epsilon)).
    \end{equation*}
    \item $\exists L_0(\epsilon) > \ell_0(\epsilon), \forall L \ge L_0(\epsilon)$:
    \begin{equation*}
        |\lambda_\ell - \stackrel[]{-}{\lambda} \hspace{-3.5mm}\phantom{\lambda}_\ell (L)| \le \epsilon/(\ell_0(\epsilon))^2, \text{ }|\lambda_\ell - \stackrel[]{+}{\lambda} \hspace{-3.5mm}\phantom{\lambda}_\ell (L)| \le \epsilon/(\ell_0(\epsilon))^2 \text{ }(\forall \ell=1,\ldots,\ell_0(\epsilon))
    \end{equation*}
    \begin{equation*}
        \Rightarrow |\stackrel[]{+}{\lambda} \hspace{-3.5mm}\phantom{\lambda}_\ell (L) - \stackrel[]{-}{\lambda} \hspace{-3.5mm}\phantom{\lambda}_\ell (L)| \le 2\epsilon/(\ell_0(\epsilon))^2 \text{ }(\forall \ell=1,\ldots,\ell_0(\epsilon)).
    \end{equation*}
    \item (due to items (iv-v)) $\exists L_0(\epsilon), \forall L \ge L_0(\epsilon), \exists \rho_3(\epsilon,L), \forall \rho \le \rho_3(\epsilon,L)$:
    \begin{equation*}
        |\lambda_\ell(L,\rho) - \stackrel[]{\Asterisk}{\lambda} \hspace{-3.5mm}\phantom{\lambda}_\ell (L)| \le 3\epsilon/ (\ell_0(\epsilon))^2\text{ }(\forall \ell=1,\ldots,\ell_0(\epsilon), \forall \Asterisk \in \{-,+\} )
    \end{equation*}
    \begin{equation*}
        \Rightarrow |\lambda_\ell(L,\rho) - \lambda_\ell | \le 4\epsilon/ (\ell_0(\epsilon))^2\text{ }(\forall \ell=1,\ldots,\ell_0(\epsilon), \forall \Asterisk \in \{-,+\} )
    \end{equation*}
    \begin{equation*}
        \Rightarrow \left| \sum_{\ell=1}^{\ell_0(\epsilon)} (\ell-1) \lambda_\ell(L,\rho) - \sum_{\ell=1}^{\ell_0(\epsilon)} (\ell-1) \lambda_\ell \right| \le \sum_{\ell=1}^{\ell_0(\epsilon)} \ell_0(\epsilon) 4 \epsilon/(\ell_0(\epsilon))^2 \le 4 \epsilon. 
    \end{equation*}
    \end{enumerate}

    Now, considering any $\epsilon < \nicefrac{1}{5} \sum_{\ell=1}^\infty \ell \lambda_\ell$, $L \ge L_0(\epsilon)$ and $\rho \le \rho_1(\epsilon,L) \wedge \rho_2(\epsilon) \wedge \rho_3(\epsilon,L)$, we evaluate the quantity of interest, ${\hat\mu(Z^L_{\Gamma_\rho} \ge 1)}/{L \hat\mu(\Gamma_\rho)}$, starting with its numerator:

    \begin{equation*}
      \hat\mu(Z^L_{\Gamma_\rho} \ge 1) = \int_\Omega \mu_\omega(Z^{\omega,L}_{\Gamma_\rho} \ge 1) d \mathbb{P}(\omega) = \int_\Omega \mu_\omega\left(\bigcup_{j=0}^{L-1} (T_\omega^j)^{-1} \Gamma_\rho(\theta^j \omega) \right) d \mathbb{P}(\omega)
    \end{equation*}
    \begin{equation*}
        \stackrel{(\star)}{=} \int_\Omega \sum_{j=0}^{L-1} \mu_\omega((T_\omega^j)^{-1} \Gamma_\rho(\theta^j \omega) ) d \mathbb{P}(\omega)  - \int_\Omega \sum_{\ell=0}^{L-1} \ell \mu_\omega(Z^{\omega,L}_{\Gamma_\rho} = \ell+1) d \mathbb{P}(\omega)
    \end{equation*}
    \begin{equation*}
        = L \hat\mu(\Gamma_\rho) - \int_\Omega \left( \sum_{\ell=0}^{L-1}  \ell \lambda^\omega_{\ell+1}(L,\rho) \right) \mu_\omega(Z^{\omega,L}_{\Gamma_\rho}>0) d \mathbb{P}(\omega)
    \end{equation*}
    \begin{eqnarray*}
        \displaystyle = L \hat\mu(\Gamma_\rho) & \displaystyle  - \int_\Omega \left( \sum_{\ell=0}^{\ell_0(\epsilon)-1}  \ell \lambda^\omega_{\ell+1}(L,\rho) \right) \mu_\omega(Z^{\omega,L}_{\Gamma_\rho}>0) d \mathbb{P}(\omega) \\
        & \displaystyle  - \int_\Omega \left( \sum_{\ell=\ell_0(\epsilon)}^{\infty}  \ell \lambda^\omega_{\ell+1}(L,\rho) \right) \mu_\omega(Z^{\omega,L}_{\Gamma_\rho}>0) d \mathbb{P}(\omega)
    \end{eqnarray*}\vspace{-5mm}
    \begin{equation*}
         = L \hat\mu(\Gamma_\rho)  - \sum_{\ell=0}^{\ell_0(\epsilon)-1} \ell \hat\mu(Z^L_{\Gamma_\rho} = \ell+1)  - \int_\Omega \left( \sum_{\ell=\ell_0(\epsilon)}^{\infty}  \ell \lambda^\omega_{\ell+1}(L,\rho) \right) \mu_\omega(Z^{\omega,L}_{\Gamma_\rho}>0) d \mathbb{P}(\omega)
    \end{equation*}
    \begin{eqnarray*}
        = L \hat\mu(\Gamma_\rho) & - &\left(\sum_{\ell=1}^{\ell_0(\epsilon)} (\ell-1) \lambda_{\ell}(L,\rho) \right)\hat\mu(Z^L_{\Gamma_\rho}>0)\\
        & - & \int_\Omega \left( \sum_{\ell=\ell_0(\epsilon)+1}^{\infty}  (\ell-1) \lambda^\omega_{\ell}(L,\rho) \right) \mu_\omega(Z^{\omega,L}_{\Gamma_\rho}>0) d \mathbb{P}(\omega)
    \end{eqnarray*}
    where $(\star)$ applied a typical Venn diagram argument using overcounting and correction.

    Then we consider the following two estimates.

    First, we have that:
    \begin{equation*}
     \sum_{\ell=1}^{\ell_0(\epsilon)} (\ell-1) \lambda_{\ell}(L,\rho) \stackrel{\text{(v)}}{\le} \sum_{\ell=1}^{\ell_0(\epsilon)} (\ell-1) \lambda_{\ell} + 4\epsilon \le \sum_{\ell=1}^{\infty} (\ell-1) \lambda_{\ell} + 5\epsilon \text{ and}
    \end{equation*}
    \begin{eqnarray*}
    \sum_{\ell=1}^{\ell_0(\epsilon)} (\ell-1) \lambda_{\ell}(L,\rho) & \stackrel{\text{(v)}}{\ge} & \sum_{\ell=1}^{\ell_0(\epsilon)} (\ell-1) \lambda_{\ell} - 4\epsilon = \sum_{\ell=1}^{\infty} (\ell-1) \lambda_{\ell} - \sum_{\ell=\ell_0(\epsilon)+1}^\infty(\ell-1) \lambda_{\ell} - 4\epsilon \\
    & \stackrel{\text{(i.a)}}{\ge} & \sum_{\ell=1}^{\infty} (\ell-1) \lambda_{\ell} - 5 \epsilon.
    \end{eqnarray*}

    Second, with $\upsilon^\omega_{\Gamma_\rho} (x) = \inf\{j\ge 0: T_\omega^j \in \Gamma_\rho(\theta^j \omega)\}$, we have that:
    \begin{equation*}
        0 \le \int_\Omega \left( \sum_{\ell=\ell_0(\epsilon)+1}^{\infty}  (\ell-1) \lambda^\omega_{\ell}(L,\rho) \right) \mu_\omega(Z^{\omega,L}_{\Gamma_\rho}>0) d \mathbb{P}(\omega) \le  \sum_{\ell=\ell_0(\epsilon)+1}^{L}  \ell \hat\mu(Z^L_{\Gamma_\rho}=\ell) 
    \end{equation*}
    \begin{equation*}
        = \sum_{\ell=\ell_0(\epsilon)+1}^{L}  \ell \sum_{j=0}^{L-1} \hat\mu(Z^L_{\Gamma_\rho}=\ell, \upsilon_{\Gamma_\rho} =j ) \le \sum_{\ell=\ell_0(\epsilon)+1}^{L}  \ell \sum_{j=0}^{L-1} \hat\mu(Z^{L-j}_{\Gamma_\rho} \circ S^j = \ell, (S^j)^{-1} \Gamma_\rho )
    \end{equation*}
    \begin{equation*}
         = \sum_{\ell=\ell_0(\epsilon)+1}^{L}  \ell \sum_{j=0}^{L-1} \alpha_\ell(L-j,\rho) \hat\mu(\Gamma_\rho)         = \left( 
        \sum_{j=0}^{L-1} \sum_{\ell=\ell_0(\epsilon)+1}^{L}  \ell  \alpha_\ell(L-j,\rho) 
        \right) \hat\mu(\Gamma_\rho) 
    \end{equation*}
    \begin{equation*}
    \le  \left[ 
        \sum_{j=0}^{L-1} \left(\sum_{\ell=\ell_0(\epsilon)+1}^{L} \hat\alpha_\ell(L-j,\rho) \right) + \ell_0(\epsilon) \hat\alpha_{\ell_0(\epsilon)}(L-j,\rho)- L\hat\alpha_{L+1}(L-j,\rho)
        \right] \hat\mu(\Gamma_\rho) 
    \end{equation*}
    \begin{equation*}
        \le \left[\sum_{j=0}^{L-1} \sum_{\ell=\ell_0(\epsilon)+1}^{L} \ell \hat\alpha_\ell(L-j,\rho) \right] \hat\mu(\Gamma_\rho) \stackrel{\text{(ii)}}{\le} 2 \epsilon L  \hat\mu(\Gamma_\rho)
    \end{equation*}

Combining what we got so far, it follows that:
\begin{eqnarray*}
    \frac{\hat\mu(Z^L_{\Gamma_\rho}\ge 1)}{L\hat\mu(\Gamma_\rho)} & \le & \frac{L\hat\mu(\Gamma_\rho) - \left( \sum_{\ell=1}^\infty (\ell-1) \lambda_\ell - 5 \epsilon \right) \hat\mu(Z^L_{\Gamma_\rho}\ge 1) }{L\hat\mu(\Gamma_\rho)}\\
    & = & 1 - \left( \sum_{\ell=1}^\infty \ell \lambda_\ell -1 - 5 \epsilon \right) \frac{\hat\mu(Z^L_{\Gamma_\rho}\ge 1)}{L\hat\mu(\Gamma_\rho)}\\
    \Rightarrow \frac{\hat\mu(Z^L_{\Gamma_\rho}\ge 1)}{L\hat\mu(\Gamma_\rho)} & \le & \frac{1}{\sum_{\ell=1}^\infty \ell \lambda_\ell - 5 \epsilon}
\end{eqnarray*}
and
\begin{eqnarray*}
     \frac{\hat\mu(Z^L_{\Gamma_\rho}\ge 1)}{L\hat\mu(\Gamma_\rho)} & \ge & \frac{L\hat\mu(\Gamma_\rho)- \left(\sum_{\ell=1}^\infty (\ell-1) \lambda_\ell + 5 \epsilon \right) \hat\mu(Z^L_{\Gamma_\rho} \ge 1) - 2 \epsilon L \hat\mu(\Gamma_\rho)  }{L\hat\mu(\Gamma_\rho)}\\
     & = & 1 - \left(\sum_{\ell=1}^\infty \ell \lambda_\ell -1 + 5 \epsilon \right)  \frac{\hat\mu(Z^L_{\Gamma_\rho}\ge 1)}{L\hat\mu(\Gamma_\rho)} - 2 \epsilon\\
     \Rightarrow  \frac{\hat\mu(Z^L_{\Gamma_\rho}\ge 1)}{L\hat\mu(\Gamma_\rho)} & \ge & \frac{1 - 2\epsilon}{\sum_{\ell=1}^\infty \ell \lambda_\ell + 5 \epsilon }
\end{eqnarray*}

Considering the final two inequalities and passing $\lim_{\epsilon \rightarrow 0}\varlimsup_{L \rightarrow \infty} \varlimsup_{\rho \rightarrow 0}$ we observe that
    \begin{equation*}
       \varlimsup_{L \rightarrow \infty} \varlimsup_{\rho \rightarrow 0}  \frac{\hat\mu(Z^L_{\Gamma_\rho}\ge 1)}{L\hat\mu(\Gamma_\rho)} = ({\textstyle \sum_{k=1}^\infty k \lambda_k})^{-1} = \alpha_1
    \end{equation*}

Alternating between $\limsup$'s and $\liminf$'s lets us reach the first desired conclusion.

Finally, to take care of the second desired conclusion, it suffices to note that
\begin{equation*}
    \frac{\hat\mu(Z^L_{\Gamma_\rho} = n)}{L\hat\mu(\Gamma_\rho)} = \frac{\hat\mu(Z^L_{\Gamma_\rho} \ge 1)}{L \hat\mu(\Gamma_\rho)}\frac{\hat\mu(Z^L_{\Gamma_\rho} = n)}{\hat\mu(Z^L_{\Gamma_\rho} >0)},
\end{equation*}then take the appropriate limits and apply the first conclusion we have just proved (to obtain $\alpha_1$), together with the definition of $\lambda_n$.
\end{proof}

\begin{lem}\label{lem:variance} Let $(\theta, \mathbb{P}, T_\omega, \mu_\omega, \Gamma)$ be a system satisfying  
 (H\ref{hyp:amb}), (H\ref{hyp:weakhypplaincyl}), (H\ref{hyp:quenchedsep}), (H\ref{hyp:lip}), (H\ref{hyp:ball}), (H\ref{hyp:annulus}),  (H\ref{hyp:quenchdec}), (H\ref{hyp:annealdec}) and (H\ref{hyp:paramincenter}).

Then: $\forall t >0, \forall n \ge 1$, $ \forall L \ge 1$, $\exists \rho_{\operatorname{var}}(L) >0$, $\forall \rho \le \rho_{\operatorname{var}}(L)$ small enough so that $N := \lfloor \frac{t}{\hat\mu(\Gamma_\rho)} \rfloor \ge 3$ and $N' := \frac{N}{L} \in \mathbb{N}_{\ge 3}$\footnote{See footnote \ref{note:fractional}.}, one has:
\begin{equation*}
\operatorname{var}_\mathbb{P}(\mathfrak{W}_\rho) \le C_{t,L} \cdot \rho^q \text{, }\forall q \in \big(0,q(d_0,d_1,\eta,\beta,\mathfrak{p}) \big),
\end{equation*}
where
\begin{equation*}
    \mathfrak{W}_\rho (\omega) := \sum_{j=0}^{N'-1} \mu_\omega(Z^{\omega}_j = n), Z^{\omega}_j := \sum_{l=jL}^{(j+1)L-1} \mathbbm{1}_{\Gamma_\rho(\theta^l \omega)} \circ T^l_\omega 
\end{equation*}and $q(d_0,d_1,\eta,\beta,\mathfrak{p})$ is a positive quantity to be presented in the proof (which can be written explicitly).
\end{lem}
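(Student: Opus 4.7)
The plan is to expand $\operatorname{var}_{\mathbb{P}}(\mathfrak{W}_\rho)=\sum_{j,j'=0}^{N'-1}\operatorname{Cov}_{\mathbb{P}}(f_j,f_{j'})$ with $f_j(\omega):=\mu_\omega(Z_j^\omega=n)$, and to reduce the double sum to a single sum in the gap variable. The $S$-invariance of $\hat\mu$ together with the equivariance ${T_\omega^n}_*\mu_\omega=\mu_{\theta^n\omega}$ forces $f_j=F\circ\theta^{jL}$ with $F(\omega):=\mu_\omega(Z_0^\omega=n)$, so that by $\theta$-invariance of $\mathbb{P}$ each covariance depends only on $k=|j'-j|$ and equals $\operatorname{Cov}_{\mathbb{P}}(F,F\circ\theta^{kL})$. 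I then split the resulting sum at a threshold $\Delta$ (to be optimized as $\rho^{-a}$) into a near regime $k\le\Delta$ and a far regime $k>\Delta$.

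In the near regime I use the pointwise estimate $F(\omega)\le\mu_\omega(Z_0^\omega\ge 1)\le\sum_{l=0}^{L-1}\mu_{\theta^l\omega}(\Gamma_\rho(\theta^l\omega))\lesssim L\rho^{d_0}$, which follows from the union bound and the upper ball regularity (H\ref{hyp:ball}). This yields $|\operatorname{Cov}_{\mathbb{P}}(f_j,f_{j'})|\lesssim L^2\rho^{2d_0}$ and a total near contribution $\lesssim N'\Delta\cdot L^2\rho^{2d_0}\lesssim\Delta\rho^{2d_0-d_1}$ (absorbing $t,L$ into $C_{t,L}$) after using $N'\lesssim(L\rho^{d_1})^{-1}$. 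In the far regime, I reframe the $\mathbb{P}$-covariance as a skew-product covariance: setting $\tilde F_0(\omega,x):=\mathbbm{1}_{\{Z_0^\omega(x)=n\}}$ and $\tilde{\tilde F}(\omega,x):=F(\omega)$, disintegration plus the direct identity $\tilde F_0\circ S^{kL}=\mathbbm{1}_{\{Z_k^\omega=n\}}$ give $\operatorname{Cov}_{\mathbb{P}}(F,F\circ\theta^{kL})=\int\tilde F_0\,(\tilde{\tilde F}\circ S^{kL})\,d\hat\mu-\hat\mu(\tilde F_0)\hat\mu(\tilde{\tilde F})$. I then mollify $\tilde F_0$ by replacing each of its $L$ constituent indicators $\mathbbm{1}_{\Gamma_\rho(\theta^l\omega)}$ by a Lipschitz bump $\phi^{\theta^l\omega}_{\rho,r}$ supported in $\Gamma_{\rho+r}(\theta^l\omega)$ and equal to $1$ on $\Gamma_{\rho-r}(\theta^l\omega)$, with spatial Lipschitz constant $O(1/r)$, producing an approximant $\tilde F_0^\sharp$. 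The Lipschitz constant of $\tilde F_0^\sharp$ on $\Omega\times M$ is $\lesssim C_L/r$: the chain rule assembles the contributions coming from (H\ref{hyp:lip}) (Lipschitz regularity of $\theta$, $T_\cdot$, and $\Gamma$) and from the uniform bound $\|DT_\omega^l\|\le a_L$ on the relevant plain cylinders; the latter is applicable because (H\ref{hyp:quenchedsep}) places $(T_\omega^l)^{-1}\Gamma_{3\rho/2}(\theta^l\omega)$ inside those cylinders once $\rho\le\rho_{\mathrm{sep}}(L)$, and (H\ref{hyp:weakhypplaincyl}) together with (H\ref{hyp:bddderiv}) supplies the derivative bound. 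The mollification error is controlled by the annulus regularity (H\ref{hyp:annulus}): $\|\tilde F_0-\tilde F_0^\sharp\|_{L^1(\hat\mu)}\lesssim L\hat\mu(\Gamma_{\rho+r}\setminus\Gamma_{\rho-r})\lesssim Lr^\eta\rho^{d_0-\beta}$. Applying annealed decay (H\ref{hyp:annealdec}) to $(\tilde F_0^\sharp,\tilde{\tilde F})$ and absorbing the mollification error gives the per-pair estimate $|\operatorname{Cov}_{\mathbb{P}}(f_j,f_{j'})|\lesssim(kL)^{-\mathfrak{p}}\,C_L r^{-1}\cdot L\rho^{d_0}+L^2r^\eta\rho^{2d_0-\beta}$; summing $\sum_{k>\Delta}k^{-\mathfrak{p}}\lesssim\Delta^{1-\mathfrak{p}}$ against $\lesssim N'$ pairs per gap yields a far contribution $\lesssim\Delta^{1-\mathfrak{p}}\rho^{d_0-d_1}/r+r^\eta\rho^{2d_0-\beta-2d_1}$.

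Adding the three contributions and setting $\Delta=\rho^{-a}$, $r=\rho^{b}$, one obtains a bound of the form $\rho^{2d_0-d_1-a}+\rho^{a(\mathfrak{p}-1)+d_0-d_1-b}+\rho^{b\eta+2d_0-\beta-2d_1}$; the existence of $a,b>0$ making all three exponents simultaneously exceed a common $q>0$ is exactly what (H\ref{hyp:paramincenter}) encodes, and the supremum of such $q$'s gives the explicit quantity $q(d_0,d_1,\eta,\beta,\mathfrak{p})$ cited in the statement. The main obstacle will be the chain-rule computation of the $(\omega,x)$-Lipschitz constant of $\tilde F_0^\sharp$: one has to track the cumulative $\omega$- and $x$-Lipschitz constants across $L$ nested compositions of $T_\omega^l$'s with bumps $\phi^{\theta^l\omega}_{\rho,r}$, keeping the dependence on $\rho$ localised in the $1/r$ factor, so that $r$ and $\Delta$ can be independently optimised. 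The quenched decay (H\ref{hyp:quenchdec}), which does not enter the crude $F\lesssim L\rho^{d_0}$ estimate above, is most likely used to sharpen this pointwise bound on $F$ to something closer to $L^n\rho^{nd_0}$ via approximate independence of the $n$ hits at well-separated times, which is what actually closes the parametric balance in (H\ref{hyp:paramincenter}).
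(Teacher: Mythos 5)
Your overall architecture (stationarize $f_j=F\circ\theta^{jL}$, split the covariance sum at a gap $\Delta$, mollify the exact-$n$-hits indicator at scale $r$, optimize $\Delta=\rho^{-a}$, $r=\rho^{b}$) matches the paper's. In the far regime, however, you take a genuinely different and in fact leaner route: by writing $\int F\cdot(F\circ\theta^{kL})\,d\mathbb{P}=\int_{\Omega\times M}\mathbbm{1}_{\{Z_0^\omega=n\}}\,(\tilde{\tilde F}\circ S^{kL})\,d\hat\mu$ with $\tilde{\tilde F}(\omega,x)=F(\omega)$ depending on $\omega$ only, you need a single application of the annealed decay (H\ref{hyp:annealdec}) with the mollified indicator as the Lipschitz observable and $\tilde{\tilde F}\in L^\infty$ as the forward observable. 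The paper instead passes through two decorrelation steps — quenched decay (H\ref{hyp:quenchdec}) to turn the product $\mu_{\omega'}(Z^{\omega'}_{j-i}=n)\,\mu_{\omega'}(\stackrel{+}{\phi}_r^{\omega'})$ into a single fibered integral, then annealed decay to factor it. Your identity is correct and your route dispenses with (H\ref{hyp:quenchdec}) here entirely; your closing speculation that quenched decay is needed to sharpen the pointwise bound on $F$ to $L^n\rho^{nd_0}$ misidentifies where that hypothesis enters (it plays no role in your own argument and is not used that way in the paper's). Your per-indicator mollification also yields a Lipschitz constant of order $r^{-1}$ rather than the paper's $r^{-2}$ (which comes from a quotient-of-distances interpolation on the whole set $\mathcal{U}_{\omega'}$), a harmless improvement, though you should make explicit the combinatorial expansion of $\{Z_0^\omega=n\}$ over hit-patterns so that the one-sided bumps genuinely sandwich the indicator.

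The genuine gap is in the bookkeeping of the near regime and of the annulus error, and it prevents the proof from closing under (H\ref{hyp:paramincenter}) as stated. You bound \emph{both} factors of the near-diagonal covariances by $L\rho^{d_0}$ and then use $N'\lesssim(L\rho^{d_1})^{-1}$, arriving at $\Delta\,\rho^{2d_0-d_1}$; similarly your annulus term carries $N'^2\lesssim\rho^{-2d_1}$ against only $\rho^{2d_0}$. Making the exponent $2d_0-d_1-a$ positive requires $2d_0>d_1$, which (H\ref{hyp:ball}) and (H\ref{hyp:paramincenter}) do not guarantee, so in the regime $d_1>2d_0$ your system of inequalities in $(a,b)$ has no solution while the lemma is still supposed to hold. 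The repair is exactly the paper's trick: bound only one factor by $L\rho^{d_0}$ (resp.\ by the annulus measure) and sum the other intact into $\sum_j\mu_\omega(Z_j^\omega=n)$, whose expectation is $N'\hat\mu(Z_0=n)\le N'L\hat\mu(\Gamma_\rho)\lesssim t$ by the Kac normalization — this replaces your factor $\rho^{2d_0-d_1}$ by $\rho^{d_0}$ and your $\rho^{2d_0-2d_1}$ by $\rho^{-d_1}$, recovering the system $d_0>\alpha d_1$, $w\eta>\beta+d_1$, $\alpha d_0(\mathfrak{p}-1)>2w+d_1$ whose solvability is what (H\ref{hyp:paramincenter}) encodes. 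With that substitution (and the corresponding redefinition of your exponents) the rest of your argument goes through.
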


\begin{proof} Let $t, n$ and $ L$ be as in the statement. Fix $\alpha \in (0,1)$. Set $\rho_{\operatorname{var}}(L) \le \rho_{\operatorname{sep}}(L) \wedge \rho_{\operatorname{dim}}$ small enough so that ${N}^\alpha < N'$. Consider $\rho \le \rho_{\operatorname{var}}(L)$ as in the statement.

For a given $j \in [0,N'-1]$, write $\omega' = \theta^{jL} \omega$ and notice that
\begin{eqnarray*}
    &\displaystyle \mathbb{E}_\mathbb{P}(\mathfrak{W}_\rho)  
    = \sum_{j=0}^{N'-1} \mathbb{E}_\mathbb{P}\left( \mu_\omega(Z^{\omega}_j = n)\right) 
    = \sum_{j=0}^{N'-1} {\mathbb{E}}_\mathbb{P} \left(\mu_{\omega'}({\textstyle \sum\nolimits_{i=0}^{L-1} \mathbbm{1}_{\Gamma_\rho(\theta^i \omega')} \circ T_{\omega'}^i {=}n )}\right)\\
   & \displaystyle  = \sum_{j=0}^{N'-1}  \mathbb{E}_\mathbb{P} (\mu_\omega(Z^{\omega}_0=n) )  = \sum_{j=0}^{N'-1} \hat\mu(Z_0 =n ) = N' \hat\mu(Z_0 =n ).
\end{eqnarray*}

Now fix $\Delta := {N}^\alpha < N'$. Then:
\begin{eqnarray*}
    \mathbb{E}_\mathbb{P}({\mathfrak{W}_\rho}^2) & = & \sum_{i,j=0}^{N'-1} \int_\Omega \mu_\omega(Z^{\omega}_i=n) \mu_\omega(Z^{\omega}_j=n) d \mathbb{P}(\omega)\\
    & = & 2 \sum_{i=0}^{N'-1} \sum_{j=i}^{(i+\Delta) \wedge (N'-1)}  \int_\Omega \mu_\omega(Z^{\omega}_i=n) \mu_\omega(Z^{\omega}_j=n) d \mathbb{P}(\omega) \\
    & + & 2 \sum_{i=0}^{N'-1} \sum_{j=(i+\Delta) \wedge (N'-1)+1}^{N'-1}  \int_\Omega \mu_\omega(Z^{\omega}_i=n) \mu_\omega(Z^{\omega}_j=n) d \mathbb{P}(\omega) \\
    & =: & (I) + (II).
\end{eqnarray*}

Immediately we get that
\begin{equation*}
    \mu_\omega(Z^{\omega}_j=n) \le \mu_\omega(Z^{\omega}_j \ge 1) \le \sum_{l=jL}^{(j+1)L-1} \mu_{\theta^l\omega}(\Gamma_\rho(\theta^l \omega)) \stackrel{\text{(H\ref{hyp:meas}.2)}}{\lesssim} L \rho^{d_0}
\end{equation*}
\begin{equation*}
    \Rightarrow (I) \lesssim L \rho^{d_0} \Delta \mathbb{E}_\mathbb{P}(\mathfrak{W}_\rho) =  \Delta \rho^{d_0} N \hat\mu(Z_0 =n ).
\end{equation*}

Most of the remaining work is to control component $(II)$.

Fix $\omega \in \Omega$ and, for a given $i \in [0, N' -1]$, write $\omega' = \theta^{iL} \omega$. Moreover, consider $r \in (0, \rho/2)$, $v \in [0,L-1]$ and denote by
\begin{equation}\label{eq:targetballs}
    U_{v,\omega'} = \Gamma_\rho(\theta^v \omega'), \text{ }\stackrel{-}{U}_{v,r,\omega'} =  B_r({U_{v,\omega'}}^c)^c,\text{ } \stackrel{+}{U}_{v,r,\omega'} = B_r(U_{v,\omega'}),
\end{equation}respectively, the $\rho$-sized target with noise $\omega'$ $v$-steps ahead; its diminishment by radius $r$; and its enlargement by radius $r$. They relate as $\stackrel{-}{U}_{v,r,\omega'}\subset U_{v,\omega'} \subset \stackrel{+}{U}_{v,r,\omega'}$.

Moreover, dynamical counterparts of those in equation (\ref{eq:targetballs}) are denote by 
\begin{equation*}
\begin{tabular}{c  c  c c @{} l}
    $\displaystyle \{Z^{\omega'}_0 = n\}$ & $\displaystyle =$ & $\displaystyle \mathcal{U}_{\omega'}$ & $\displaystyle =$ & $\displaystyle \bigsqcup_{0 \le v_1 < \ldots < v_n \le L-1}\left( \bigcap_{l=1}^n (T^{v_l}_{\omega'})^{-1} U_{v_l,\omega'} \text{ } \hspace{3mm} \cap  \bigcap_{\substack{v \in [0,L-1] \\ \setminus \{v_l : l=1,\ldots,n\} }} (T^v_{\omega'})^{-1} {U_{v,\omega'}}^c \right),$\\
     &  & $\displaystyle \stackrel{-}{\mathcal{U}}_{r,\omega'}$ & $\displaystyle =$ & $\displaystyle \bigsqcup_{0 \le v_1 < \ldots < v_n \le L-1}\left( \bigcap_{l=1}^n (T^{v_l}_{\omega'})^{-1} \stackrel{-}{U}_{v_l,\omega'} \text{ } \cap  \bigcap_{\substack{v \in [0,L-1] \\ \setminus \{v_l : l=1,\ldots,n\} }} (T^v_{\omega'})^{-1} {\stackrel{+}{U}_{v,\omega'}}^c \right),$\\
     &  & $\displaystyle \stackrel{+}{\mathcal{U}}_{r,\omega'}$ & $\displaystyle =$ & $\displaystyle \bigsqcup_{0 \le v_1 < \ldots < v_n \le L-1}\left( \bigcap_{l=1}^n (T^{v_l}_{\omega'})^{-1} \stackrel{+}{U}_{v_l,\omega'} \text{ } \cap  \bigcap_{\substack{v \in [0,L-1] \\ \setminus \{v_l : l=1,\ldots,n\} }} (T^v_{\omega'})^{-1} {\stackrel{-}{U}_{v,\omega'}}^c \right),$
\end{tabular}
\end{equation*}describing
\begin{itemize}
    \item[-]  the locus of points which hit the $\rho$-sized target exactly $n$ times during the time interval $[0,L-1]$ when given the noise $\omega'$;
    \item[-] the diminishment of the first by radius $r$, in the sense that hits are considered in a $r$-stringent way (at least $r$-inside the $\rho$-sized target) and non-hits are considered in a $r$-stringent way (at least $r$-away from the $\rho$-sized target);
    \item[-] the enlargement of the first by radius $r$, in the sense that hits are considered in a $r$-permissive way (at most $r$-away from the $\rho$-sized target) and non-hits are considered in a $r$-permissive way (at most $r$-inside the $\rho$-sized target).
\end{itemize}They relate as $\stackrel{-}{\mathcal{U}}_{r,\omega'} \subset \mathcal{U}_{\omega'} \subset \stackrel{+}{\mathcal{U}}_{r,\omega'}$.

Finally, define
\begin{eqnarray*}
    \stackrel{-}{\phi} \hspace{-3mm}\phantom{\phi}_{r}^{\omega'} \hspace{-0.5mm}(x) {=} \hspace{-0.5mm} \begin{cases}
        1, x \in \hspace{0.7mm}\stackrel{-}{\mathcal{U}}_{r,\omega'}\\ 0, x \in {\mathcal{U}_{\omega'}}^c \\
        \frac{d_M(x,{\mathcal{U}_{\omega'}}^c)}{ d_M(x,{\mathcal{U}_{\omega'}}^c) + d_M(x,\stackrel{-}{\mathcal{U}}_{r,\omega'})}\text{,}\hspace{0.5mm} x \hspace{-0.35mm}\in {\mathcal{U}_{\omega'}}^c \setminus \stackrel{-}{\mathcal{U}}_{r,\omega'}

    \end{cases} \hspace{-5mm} \stackrel{+}{\phi} \hspace{-3mm}\phantom{\phi}_{r}^{\omega'} \hspace{-0.5mm}(x) {=}  \hspace{-0.5mm} \begin{cases}
        1, x \in \mathcal{U}_{\omega'}\\ 0, x \in \hspace{0.7mm} \stackrel{+}{\mathcal{U}} \hspace{-4mm}{\phantom{\mathcal{U}}_{r,\omega'}}^c \\ 
        \frac{d_M(x,\stackrel{+}{\mathcal{U}} \hspace{-2mm}{\phantom{\mathcal{U}}_{r,\omega'}}^c)}{ d_M(x,\stackrel{+}{\mathcal{U}} \hspace{-2mm}{\phantom{\mathcal{U}}_{r,\omega'}}^c) + d_M(x,\mathcal{U}_{\omega'})}\text{,}\hspace{0.5mm} x \hspace{-0.35mm}\in \hspace{0.5mm} \stackrel{+}{\mathcal{U}} \hspace{-4mm}{\phantom{\mathcal{U}}_{r,\omega'}} \setminus \mathcal{U}_{\omega'}
    \end{cases}\hspace{-6mm}.
\end{eqnarray*}They relate as $\stackrel{-}{\phi} \hspace{-3mm}\phantom{\phi}_{r}^{\omega'} \le \mathbbm{1}_{\mathcal{U}_{\omega'}} \le \stackrel{+}{\phi} \hspace{-3mm}\phantom{\phi}_{r}^{\omega'}$. 

Using that $\operatorname{Lip}_{d_M}\big(d_M(x,\stackrel{+}{\mathcal{U}} \hspace{-4mm}{\phantom{\mathcal{U}}_{r,\omega'}})\big), \operatorname{Lip}_{d_M}\big(d_M(x, \mathcal{U}_{\omega'})\big) \le 1$, it can be checked that
\begin{equation*}
    \operatorname{Lip}_{d_M}(\stackrel{+}{\phi} \hspace{-3.4mm}\phantom{\phi}_{r}^{\omega'}) \le \frac{6 \operatorname{diam}(M)}{\Big(\min_{x \in M} [d_M(x,\stackrel{+}{\mathcal{U}} \hspace{-4mm}{\phantom{\mathcal{U}}_{r,\omega'}})+ d_M(x, \mathcal{U}_{\omega'})] \Big)^2 } \le \frac{6 \operatorname{diam}(M)}{d_{min}\big(\mathcal{U}_{\omega'}, \stackrel{+}{\mathcal{U}} \hspace{-4mm}{\phantom{\mathcal{U}}_{r,\omega'}}^c \big)^2},
\end{equation*}where $d_{min}\big(\mathcal{U}_{\omega'}, \stackrel{+}{\mathcal{U}} \hspace{-4mm}{\phantom{\mathcal{U}}_{r,\omega'}}^c \big) := \inf\{d_M(x,y) : x \in \mathcal{U}_{\omega'}, y \in \stackrel{+}{\mathcal{U}} \hspace{-4mm}{\phantom{\mathcal{U}}_{r,\omega'}}^c \}$. 

Notice that for a point $x \in \mathcal{U}_{\omega'}$ to be minimally-displaced in such a way as to reach $\stackrel{+}{\mathcal{U}} \hspace{-4mm}{\phantom{\mathcal{U}}_{r,\omega'}}^c$, either: a) some of the hits in its finite-orbit is consequently-displaced to an extent which now makes it at least $r$-away from associated $\rho$-sized target, or b) some of the non-hits in its finite-orbit is consequently-displaced to an extent which now makes it at least $r$-inside the associated $\rho$-sized target. In either case, the associated image point of $x$ has to be consequently-displaced by distance at least $r$. When the said image point being consequently-displaced happens to be the last one in the orbit of $x$, i.e., its $L-1$ iterate, by the expanding feature of the system (H\ref{hyp:weakhypplaincyl}) and since $\mathcal{U}_{\omega'} \subset \bigcup_{j=0}^{L-1} (T_{\omega'}^j)^{-1} \Gamma_{3/2 \rho}(\theta^j \omega') \stackrel{\text{(H\ref{hyp:quenchedsep})}}{\subset} \hspace{1mm} \stackrel{+}{\mathcal{C}} \hspace{-4mm} \phantom{C}_{L-1}^{\omega'}$, this is when $x$ would have to be displaced the least: no more than $r/a_{L-1}$ (use (H\ref{hyp:quenchedsep}) and (H\ref{hyp:bddderiv})). Therefore $r/a_{L-1} \le d_{min}\big(\mathcal{U}_{\omega'}, \stackrel{+}{\mathcal{U}} \hspace{-4mm}{\phantom{\mathcal{U}}_{r,\omega'}}^c \big)$, and so
\begin{equation*}
    \begin{tabular}{c}
    $\operatorname{Lip}_{d_M}(\stackrel{+}{\phi} \hspace{-3mm}\phantom{\phi}_{r}^{\omega'}) \le 6 \operatorname{diam}(M) {a_{L-1}}^2/r^2$, \\
    $\| \hspace{-1mm} \stackrel{+}{\phi} \hspace{-3mm}\phantom{\phi}_{r}^{\omega'} \|_{Lip_{d_M}} \hspace{-0.25mm}=\hspace{-0.25mm} \| \hspace{-1mm} \stackrel{+}{\phi} \hspace{-3mm}\phantom{\phi}_{r}^{\omega'} \|_\infty {\vee} \operatorname{Lip}_{d_M}(\stackrel{+}{\phi} \hspace{-3mm}\phantom{\phi}_{r}^{\omega'}) \hspace{-0.25mm}=\hspace{-0.25mm} 1 {\vee} \operatorname{Lip}_{d_M}(\stackrel{+}{\phi} \hspace{-3mm}\phantom{\phi}_{r}^{\omega'}) \hspace{-0.25mm}=\hspace{-0.25mm}  \operatorname{Lip}_{d_M}(\stackrel{+}{\phi} \hspace{-3mm}\phantom{\phi}_{r}^{\omega'}) \hspace{-0.25mm}\le\hspace{-0.25mm} 6 \operatorname{diam}(M) {a_{L-1}}^2/r^2$,
    \end{tabular}
\end{equation*}
where the last equality follows from $\rho$ sufficiently small. 

Now we start looking at $(II)$ directly:
\begin{equation*}
    \left| \int_\Omega \mu_\omega(Z^{\omega}_j=n) \mu_\omega(Z^{\omega}_i=n) d \mathbb{P}(\omega) - \int_\Omega \mu_\omega(Z^{\omega}_j=n) \mu_{\omega'}(\stackrel{+}{\phi} \hspace{-3mm}\phantom{\phi}_{r}^{\omega'}) d \mathbb{P}(\omega) \right|
\end{equation*}
\begin{equation*}
    = \left| \int_\Omega \mu_\omega(Z^{\omega}_j=n) \mu_{\omega'}(\mathbbm{1}_{\mathcal{U}_{\omega'}}) d \mathbb{P}(\omega) - \int_\Omega \mu_\omega(Z^{\omega}_j=n) \mu_{\omega'}(\stackrel{+}{\phi} \hspace{-3mm}\phantom{\phi}_{r}^{\omega'}) d \mathbb{P}(\omega) \right|
\end{equation*}
\begin{equation*}
    \lesssim \int_\Omega \mu_{\theta^j \omega} (Z^{\theta^j\omega}_0 = n) L \frac{r^\eta}{\rho^\beta} d \mathbb{P}(\omega)=  L \frac{r^\eta}{\rho^\beta} \hat\mu(Z_0 = n),
\end{equation*}where the $\lesssim$ is because
\begin{equation*}
    \mu_{\omega'}(\stackrel{+}{\phi} \hspace{-3mm}\phantom{\phi}_{r}^{\omega'}) \le \mu_{\omega'}(\stackrel{+}{\mathcal{U}}_{r,\omega'} \setminus \stackrel{-}{\mathcal{U}}_{r,\omega'}) \le \sum_{v=0}^{L-1} \mu_{\theta^v \omega}(\stackrel{+}{U}_{v,r,\omega'} \setminus \stackrel{-}{U}_{v,r,\omega'}) \stackrel{\text{(H\ref{hyp:annulus})}}{\lesssim} L \frac{r^\eta}{\rho^\beta}.
\end{equation*}

The approximating term that appeared above is transformed as follows:
\begin{equation*}
    \left| \int_\Omega \mu_\omega(Z^{\omega}_j=n) \mu_{\omega'}(\stackrel{+}{\phi} \hspace{-3mm}\phantom{\phi}_{r}^{\omega'}) d \mathbb{P}(\omega) - \int_\Omega \mu_{\omega'}(\mathbbm{1}_{\{Z^{ \omega'}_{j-i} =n\}} \stackrel{+}{\phi} \hspace{-3mm}\phantom{\phi}_{r}^{\omega'}  ) d \mathbb{P}(\omega) \right|
\end{equation*}
\begin{equation*}
    = \left| \int_\Omega \mu_{\omega'}(Z^{\omega'}_{j-i}=n) \mu_{\omega'}(\stackrel{+}{\phi} \hspace{-3mm}\phantom{\phi}_{r}^{\omega'}) d \mathbb{P}(\omega) - \int_\Omega \mu_{\omega'}(\mathbbm{1}_{\{Z^{ \theta^{(j-i)L} \omega'}_{0} \circ T_{\omega'}^{(j-i)L} =n\}} \stackrel{+}{\phi} \hspace{-3mm}\phantom{\phi}_{r}^{\omega'}  ) d \mathbb{P}(\omega) \right|
\end{equation*}
\begin{equation*}
    =  \int_\Omega \left| \mu_{\theta^{(j-i)L} \omega'}(Z^{\theta^{(j-i)L} \omega'}_{0}=n) \mu_{\omega'}(\stackrel{+}{\phi} \hspace{-3mm}\phantom{\phi}_{r}^{\omega'})  -  \mu_{\omega'}(\mathbbm{1}_{\{Z^{ \theta^{(j-i)L} \omega'}_{0}  =n\}} \circ T_{\omega'}^{(j-i)L} \stackrel{+}{\phi} \hspace{-3mm}\phantom{\phi}_{r}^{\omega'}  ) \right| d \mathbb{P}(\omega) 
\end{equation*}
\begin{equation*}
    \stackrel{(H\ref{hyp:quenchdec})}{\lesssim} \int_\Omega ((j-i)L)^{-\mathfrak{p}}  \| \hspace{-1mm} \stackrel{+}{\phi} \hspace{-3mm}\phantom{\phi}_{r}^{\omega'}  \|_{\operatorname{Lip}_{d_M}} d \mathbb{P} (\omega) \lesssim\footnotemark \text{ } ((j-i)L)^{-\mathfrak{p}} 
    \frac{{a_{L-1}}^2}{r^2}.
\end{equation*}\footnotetext{Notice that $\| \hspace{-1mm} \stackrel{+}{\phi} \hspace{-3mm}\phantom{\phi}_{r}^{\omega'}  \|_{\operatorname{Lip}_{d_M}} \lesssim {a_{L-1}}^2/r^2$ a.s. is enough to justify the above inequality. However,  our hypotheses imply this is true for every $\omega$. This might seem an excess, but later in the proof we will need the inequality for every $\omega$. See the next footnote.}
%{\color{green}
%\begin{equation*}
%    \color{green} \lesssim \int_{\Omega_{\operatorname{nice}}(\epsilon,L)} ((j-i)L)^{-\mathfrak{p}} \| \hspace{-1mm} \stackrel{+}{\phi} \hspace{-3mm}\phantom{\phi}_{r}^{\omega'}  \|_{\operatorname{Lip}_{d_M}} d \mathbb{P} (\omega) + \int_{\Omega \setminus \Omega_{\operatorname{nice}}(\epsilon,L)} ((j-i)L)^{-\mathfrak{p}} \| \hspace{-1mm} \stackrel{+}{\phi} \hspace{-3mm}\phantom{\phi}_{r}^{\omega'}  \|_{\operatorname{Lip}_{d_M}} d \mathbb{P} (\omega) 
%\end{equation*}
%assuming that the norm in the right is finite, otherwise decay wouldn't kick in
%\begin{equation*}
%    \color{green} \le ((j-i)L)^{-\mathfrak{p}} \left(a_{L-1}/r + \int_{\Omega \setminus \Omega_{\operatorname{nice}}(\epsilon,L)}  \| \hspace{-1mm} \stackrel{+}{\phi} \hspace{-3mm}\phantom{\phi}_{r}^{\omega'}  \|_{\operatorname{Lip}_{d_M}} d \mathbb{P} (\omega) \right),
%\end{equation*}whose integral we do not know if is finite or how it varies with $r$.
%}

Whereas the new approximating term which appeared above is transformed as follows:
\begin{equation*}
    \int_\Omega \mu_{\omega'}(\mathbbm{1}_{\{Z^{ \omega'}_{j-i} =n\}} \stackrel{+}{\phi} \hspace{-3mm}\phantom{\phi}_{r}^{\omega'}  ) d \mathbb{P}(\omega) = \int_{\Omega \times M} \mathbbm{1}_{\{Z_{j-i} =n\}} \stackrel{+}{\phi} \hspace{-3mm}\phantom{\phi}_{r}   d \hat\mu = \int_{\Omega \times M} \mathbbm{1}_{\{Z_{0} =n\}} \circ S^{(j-i)L} \stackrel{+}{\phi} \hspace{-3mm}\phantom{\phi}_{r}   d \hat\mu
\end{equation*}
and
\begin{equation*}
    \left| \int_{\Omega \times M} \mathbbm{1}_{\{Z_{0} =n\}} \circ S^{(j-i)L} \stackrel{+}{\phi} \hspace{-3mm}\phantom{\phi}_{r}   d \hat\mu - \int_{\Omega \times M} \mathbbm{1}_{\{Z_0 =n\}} d \hat\mu \cdot \int_{\Omega \times M} \stackrel{+}{\phi} \hspace{-3mm}\phantom{\phi}_{r} d \hat\mu \right|
\end{equation*}
\begin{equation*}
    \stackrel{(H\ref{hyp:annealdec})}{\lesssim} ((j-i)L)^{-\mathfrak{p}} { \| \hspace{-1mm}\stackrel{+}{\phi} \hspace{-3mm}\phantom{\phi}_{r}\|_{\operatorname{Lip}_{d_{\Omega \times M}}}} \lesssim ((j-i)L)^{-\mathfrak{p}}   \frac{\displaystyle {a_{L-1}}^2}{\displaystyle r^2},
\end{equation*}where, recalling that $\hspace{-1mm} \stackrel{+}{\phi} \hspace{-3mm}\phantom{\phi}_{r}^{\omega} = \hspace{1mm}  \stackrel{+}{\phi} \hspace{-3mm}\phantom{\phi}_{r}^{\omega} (n,L,\rho)$, we have used that
\begin{equation*}
\begin{tabular}{c c l}
    $\displaystyle \operatorname{Lip}_{d_{\Omega \times M}}(  \stackrel{+}{\phi} \hspace{-3mm}\phantom{\phi}_{r}^{}  )$ & $=$ &  $\displaystyle \sup_{(\omega_1,x_1) \neq (\omega_2,x_2)}  \frac{\displaystyle | \hspace{-1mm}\stackrel{+}{\phi} \hspace{-3mm}\phantom{\phi}_{r}^{\omega_1}(x_1) - \stackrel{+}{\phi} \hspace{-3mm}\phantom{\phi}_{r}^{\omega_2}(x_2)|}{\displaystyle d_\Omega(\omega_1,\omega_2) \vee d_M(x_1,x_2)}$\\
    & $\le$ & $\displaystyle \sup_{x_1}\sup_{\omega_1 \neq \omega_2}  \frac{\displaystyle | \hspace{-1mm}\stackrel{+}{\phi} \hspace{-3mm}\phantom{\phi}_{r}^{\omega_1}(x_1) - \stackrel{+}{\phi} \hspace{-3mm}\phantom{\phi}_{r}^{\omega_2}(x_1)|}{\displaystyle d_\Omega(\omega_1,\omega_2)} + \sup_{\omega_2} \sup_{x_1 \neq x_2} \frac{\displaystyle| \hspace{-1mm}\stackrel{+}{\phi} \hspace{-3mm}\phantom{\phi}_{r}^{\omega_2}(x_1) - \stackrel{+}{\phi} \hspace{-3mm}\phantom{\phi}_{r}^{\omega_2}(x_2)|}{\displaystyle d_M(x_1,x_2)} $ \\
    & $\le\footnotemark$ & $\displaystyle \frac{\displaystyle {a_{L-1}}^2}{\displaystyle r^2} +  \sup_{x} \sup_{\omega_1 \neq \omega_2}  \frac{\displaystyle | \hspace{-1mm}\stackrel{+}{\phi} \hspace{-3mm}\phantom{\phi}_{r}^{\omega_1}(x) - \stackrel{+}{\phi} \hspace{-3mm}\phantom{\phi}_{r}^{\omega_2}(x)|}{\displaystyle d_\Omega(\omega_1,\omega_2)}$\\
    & $\stackrel{(\star)}{\le}$ & $\displaystyle \frac{\displaystyle {a_{L-1}}^2}{\displaystyle r^2} + \frac{\displaystyle (\alpha^L \beta+ \gamma) {{a_{L-1}}^2} }{\displaystyle r^2} \lesssim \frac{\alpha^L {a_{L-1}}^2}{r^2}$,
\end{tabular}
\end{equation*}\footnotetext{Here one needs $\| \hspace{-1mm} \stackrel{+}{\phi} \hspace{-3mm}\phantom{\phi}_{r}^{\omega'}  \|_{\operatorname{Lip}_{d_M}} \le {a_{L_1}}^2/r^2$ for every $\omega$. See the previous footnote.}with $(\star)$ following from $\omega \mapsto \stackrel{+}{\phi} \hspace{-3mm}\phantom{\phi}_{r}^{\omega}(x)$ being a locally Lipschitz function whose associated local Lipschitz constants are bounded by $\frac{ (\alpha^L \beta+ \gamma) {{a_{L-1}}^2} }{ r^2}$, where $\alpha = \operatorname{Lip}(\theta) \vee 1$, $\beta = \operatorname{Lip}(\Gamma) \vee 1$, $\gamma = \sup_{x \in M} \operatorname{Lip}(T_\cdot(x) : \Omega \to M)$. This is verified in the following paragraph.

Fix $x \in M$ and consider $\omega \in \Omega$. In case $x \in \operatorname{int} (\mathcal{U}_{\omega})$ (or $\operatorname{int }(\stackrel{+}{\mathcal{U}} \hspace{-4mm}{\phantom{\mathcal{U}}_{r,\omega}}^c)$), there is $u_x(\omega)>0$ so that $\tilde\omega \in B_{u_x(\omega)}(\omega)$ implies $x \in \operatorname{int} \mathcal{U}_{\tilde\omega} \hspace{0.7mm}$ (or $\operatorname{int }\stackrel{+}{\mathcal{U}} \hspace{-4mm}{\phantom{\mathcal{U}}_{r,\tilde\omega}}^c$), so the function of interest is locally constant. In case $x \in \operatorname{int}(\stackrel{+}{\mathcal{U}} \hspace{-4mm}{\phantom{\mathcal{U}}_{r,\omega}}^c \setminus \mathcal{U}_{\omega})$, it boils down to understand how the linear interpolation within $\stackrel{+}{\phi} \hspace{-3mm}\phantom{\phi}_{r}^{}$ varies with $\tilde\omega \in B_{u'_x(\omega)}(\omega)$,  where $u'_x(\omega)$ is that for which $\tilde\omega \in B_{u'_x(\omega)}(\omega)$ implies $x \in \operatorname{int}(\stackrel{+}{\mathcal{U}} \hspace{-4mm}{\phantom{\mathcal{U}}_{r,\tilde\omega}}^c \setminus \mathcal{U}_{\tilde\omega})$. For this purpose, we first evaluate the Lipschitz constant of $\tilde\omega \in B_{u'_x(\omega)}(\omega) \mapsto d(x,\mathcal{U}_{\tilde\omega})$ and $\tilde\omega \in B_{u'_x(\omega)}(\omega) \mapsto d(x,\stackrel{+}{\mathcal{U}} \hspace{-4mm}{\phantom{\mathcal{U}}_{r,\tilde\omega}}^c)$:\\
i)
\begin{equation*}
\begin{tabular}{c}
     $|d(x,\mathcal{U}_{\omega}) - d(x,\mathcal{U}_{\tilde\omega})| \le d_H(\mathcal{U}_{\omega},\mathcal{U}_{\tilde\omega}) $\\
     $\le \left( \substack{\displaystyle (  \sup\nolimits_{\omega} \operatorname{Lip} (T_\omega ^{-1}: \mathscr{P}(M) \to \mathscr{P}(M) ) \vee 1)^L \cdot (\operatorname{Lip}(\Gamma) \vee 1) \cdot (\operatorname{Lip}(\theta)\vee 1)^L \\  \displaystyle + \sup\limits_{A \in \mathscr{P}(M)}  \operatorname{Lip} ({T_\cdot}^{-1} A : \Omega \to \mathscr{P}(M))} \right) d_\Omega(\omega,\tilde\omega)$ \\
     $\le (\alpha^L \beta + \gamma) d_\Omega(\omega,\tilde\omega).$
\end{tabular}
\end{equation*}
since $$\operatorname{Lip}\left(\bigcap{:} \mathscr{P}(M) {\times} \mathscr{P}(M) {\to} \mathscr{P}(M) \right) {\le} 1,$$ $$\operatorname{Lip}\left(\bigcup{:} \mathscr{P}(M) {\times} \mathscr{P}(M) {\to} \mathscr{P}(M) \right) {\le} 1,$$ $$\operatorname{Lip} \left(B_\rho{:} \mathscr{P}(M) {\to} \mathscr{P}(M)\right) \le 1,$$
\begin{equation*}
    \sup\nolimits_{\omega} \operatorname{Lip} \left(T_\omega ^{-1}: \mathscr{P}(M) \to \mathscr{P}(M) \right) \le 1 / \inf_{\omega \in \Omega} \inf_{\xi \in C_1^\omega} \operatorname{CoLip}(T_\omega|_\xi: \xi \to M)  \le 1
\end{equation*}and
\begin{equation*}
    \sup\limits_{A \in \mathscr{P}(M)}  \operatorname{Lip} \left({T_\cdot}^{-1} A {:} {\Omega} {\to} {\mathscr{P}}(M)\right) {\le} \frac{\displaystyle \sup_{x \in M} \operatorname{Lip}(T_\cdot(x) : \Omega \to M)}{ \displaystyle \inf_{\omega \in \Omega} \inf_{\xi \in C^\omega_1} \operatorname{CoLip}(T_\omega|_\xi: \xi \to M) } {\le}  \sup_{x \in M} \operatorname{Lip}(T_\cdot(x) {:} {\Omega} {\to} {M}),
\end{equation*}where $\operatorname{CoLip(T)} = \inf_{x \neq y} \frac{d(Tx,Ty)}{d(x,y)}$.\\
ii) Similarly,
\begin{equation*}
    |d(x,\stackrel{+}{\mathcal{U}} \hspace{-4mm}{\phantom{\mathcal{U}}_{r,\omega}}^c) - d(x,\stackrel{+}{\mathcal{U}} \hspace{-4mm}{\phantom{\mathcal{U}}_{r,\tilde\omega}}^c)| \le (\alpha^L \beta + \gamma) d_\Omega(\omega,\tilde\omega),
\end{equation*}since also $\operatorname{Lip}(B_r{:} \mathscr{P}(M) {\to} \mathscr{P}(M)) \le 1$. 

To conclude justifying $(\star)$, one repeats the calculations for the Lipschitz constant of a quotient and applies (i) and (ii) to get that 
\begin{equation*}
\begin{tabular}{c c l}
     $\displaystyle \operatorname{Lip}_{d_
     \Omega}\left( \frac{\displaystyle d_M(x,\stackrel{+}{\mathcal{U}} \hspace{-4mm}{\phantom{\mathcal{U}}_{r,\omega'}}^c)}{\displaystyle  d_M(x,\stackrel{+}{\mathcal{U}} \hspace{-4mm}{\phantom{\mathcal{U}}_{r,\omega'}}^c) + d_M(x,\mathcal{U}_{\omega'})} \right)$ & $\le$ & $\displaystyle \frac{\displaystyle 4 \operatorname{diam}(M) (\alpha^L \beta+ \gamma)  }{\displaystyle d_{min}(\stackrel{+}{\mathcal{U}} \hspace{-4mm}{\phantom{\mathcal{U}}_{r,\omega'}}^c,\mathcal{U}_{\omega'}) ^2}  d_\Omega(\omega,\tilde\omega)$\\
     & $\lesssim$ & $\displaystyle \frac{\displaystyle ( \alpha^L \beta+ \gamma) {{a_{L-1}}^2} }{\displaystyle r^2} d_\Omega(\omega,\tilde\omega).$
\end{tabular}
\end{equation*}

Finally, we notice that
\begin{equation*}
    \left| \hat\mu(Z_0 {=}n)  \hat\mu(\stackrel{+}{\phi} \hspace{-3mm}\phantom{\phi}_{r}) {-} \hat\mu(Z_0 {=}n)^2 \right| {\le} \hat\mu(Z_0 {=}n) \int_\Omega \mu_\omega(\stackrel{+}{\phi} \hspace{-3mm}\phantom{\phi}_{r}^\omega - \mathbbm{1}_{\mathcal{U}_\omega}) d \mathbb{P}(\omega) \stackrel{\text{(H\ref{hyp:annulus})}}{\lesssim} L \frac{r^\eta}{\rho^\beta} \hat\mu(Z_0{=}n).
\end{equation*}

Combining the previous four steps, we arrive at
\begin{equation*}
    \left|  \int_\Omega \mu_\omega(Z^{\omega}_j=n) \mu_\omega(Z^{\omega}_i=n) d \mathbb{P}(\omega) - \hat\mu(Z_0 =n)^2 \right|
    \lesssim L \frac{r^\eta}{\rho^\beta} \hat\mu(Z_0=n) + ((j-i)L)^{-\mathfrak{p}}   \frac{\alpha^L {a_{L-1}}^2}{r^2},
\end{equation*}
which implies
\begin{equation*}
\begin{tabular}{c @{} c @{} l}
    $\displaystyle (II)$ & $\lesssim$ & $\displaystyle \sum_{i=0}^{N'-1} \sum_{j=(i+\Delta) \wedge (N'-1) +1}^{N'-1} \left( \hat\mu(Z_0 =n)^2 + L \frac{r^\eta}{\rho^\beta} \hat\mu(Z_0=n) + ((j-i)L)^{-\mathfrak{p}} \frac{\alpha^L {a_{L-1}}^2}{r^2}  \right)$\\
    & $\lesssim$ & $\displaystyle N' (N' - \Delta) \left(  \hat\mu(Z_0 =n)^2 + L \frac{r^\eta}{\rho^\beta} \hat\mu(Z_0=n) \right) + N' \frac{\alpha^L {a_{L-1}}^2}{r^2} (\Delta L)^{- \mathfrak{p}+1}.$
\end{tabular}
\end{equation*}

Then we can conclude the following about the variance:
\begin{eqnarray*}
\begin{tabular}{c @{} c @{} l}
$\displaystyle \operatorname{var}_\mathbb{P}(\mathfrak{W}_\rho)$ & ${=}$ & $\displaystyle \mathbb{E}_\mathbb{P}({\mathfrak{W}_\rho}^2) - (\mathbb{E}_\mathbb{P}(\mathfrak{W}_\rho ))^2$\\
    & $\lesssim$ & $\displaystyle \Delta \rho^{d_0} N \hat\mu(Z_0 =n )$\\
    & ${+}$ & $\displaystyle N' (N' {-} \Delta) \left(  \hat\mu(Z_0 {=}n)^2 {+} L \frac{r^\eta}{\rho^\beta} \hat\mu(Z_0{=}n) \right) {+} N' \frac{\alpha^L {a_{L-1}}^2}{r^2} (\Delta L)^{- \mathfrak{p}+1}$\\
    & ${-}$ &  $\displaystyle {N'}^2 \hat\mu(Z_0 =n )^2.$\\
    & $\lesssim$ & $\displaystyle \Delta \rho^{d_0} N \hat\mu(Z_0 =n ) + {N'}^2 L \frac{r^\eta}{\rho^\beta} \hat\mu(Z_0=n) + N' \frac{\alpha^L {a_{L-1}}^2}{r^2} (\Delta L)^{- \mathfrak{p}+1}$\\
    & ${\stackrel{(\star)}{{\lesssim}}}$ & $\displaystyle {N}^\alpha L \rho^{d_0} + N \frac{r^\eta}{\rho^\beta}  + \alpha^L {a_{L-1}}^2 L^{-\mathfrak{p}} \frac{N {N}^{\alpha(-\mathfrak{p}+1)} }{r^2}$\\
    & ${\stackrel{(\star \star)}{{\lesssim}}}$  & $\displaystyle \frac{t^\alpha}{\hat\mu(\Gamma_\rho)^\alpha} L \rho^{d_0} +  \frac{t}{\hat\mu(\Gamma_\rho)} \rho^{w \eta - \beta} + \alpha^L {a_{L-1}}^2 L^{-\mathfrak{p}} \frac{t}{\hat\mu(\Gamma_\rho)} \frac{t^{\alpha(-\mathfrak{p}+1)}}{\hat\mu(\Gamma_\rho)^{\alpha(-\mathfrak{p}+1)}}  \rho^{-2w}$\\
    & ${\stackrel{\text{(H\ref{hyp:ball})}}{{\lesssim}}}$  & $\displaystyle L \rho^{d_0-\alpha d_1} + \rho^{w \eta - \beta - d_1} + \alpha^L {a_{L-1}}^2 L^{-\mathfrak{p}} \rho^{\alpha d_0 (\mathfrak{p}-1) - 2w -d_1 }$\\
    & ${\stackrel{(\star \star \star)}{{\lesssim}}}$ & $\displaystyle \rho^{d_0-\alpha d_1} + \rho^{w \eta - \beta - d_1} +  \rho^{\alpha d_0 (\mathfrak{p}-1) - 2w -d_1 },$
\end{tabular}
\end{eqnarray*}where $(\star)$ uses $N' \hat\mu(Z_0 =n ) \le N L^{-1} \hat\mu(Z_0 \ge 1 ) \le N L^{-1} L \hat\mu(\Gamma_\rho)\lesssim t$ and $t$ is incorporated into the $\lesssim$ sign; $(\star \star)$ uses the choice $r := \rho^w$ for a given $w > 1$; and $(\star \star \star)$ incorporates $L$ dependent quantities on $\lesssim$. Notice that $t$ and $L$ dependent constants being incorporated inside $\lesssim$ is associated to the use of a constant $C_{t,L}$ in the statement.

Finally, we need to choose $(\alpha,w) \in (0,1) \times (1,\infty)$ so that 
\begin{equation*}
    \begin{cases}
        d_0 > \alpha d_1 \\
        w \eta > \beta + d_1\\
        \alpha d_0 (\mathfrak{p}-1) > 2w + d_1
    \end{cases} \text{i.e. } 
    \begin{cases}
        \alpha < \frac{d_0}{d_1} \wedge 1 = \frac{d_0}{d_1} \\
        w > \frac{\beta + d_1}{\eta} \vee 1\\
        w < \frac{\alpha d_0 (\mathfrak{p}-1) -d_1}{2}
    \end{cases},
\end{equation*}which admits a solution if, and only if,
\begin{equation*}
    \frac{\beta + d_1}{\eta} \vee 1 = \frac{\frac{d_0}{d_1} d_0(\mathfrak{p}-1) -d_1}{2} \Leftrightarrow d_0(\mathfrak{p}-1) > \frac{2\left(\frac{\beta + d_1}{\eta} \vee 1\right) + d_1}{d_0/d_1}.
\end{equation*}

This is guaranteed by the parametric constraint (H\ref{hyp:paramincenter}), so there exists some solution $(\alpha_*,w_*)$ to the system. Actually, the space of solutions forms a triangle and one can select $(\alpha_*,w_*)$ as its incenter, a function of $d_0,d_1,\eta,\beta$ and $\mathfrak{p}$, whereas the strictly positive margin this choice opens in the inequalities of the original system is denoted by $q(d_0,d_1,\eta,\beta,\mathfrak{p})$. With such a choice, we obtain that
\begin{equation*}
    \operatorname{var}_\mathbb{P}(\mathfrak{W}_\rho) \le C_{t,L} \cdot \rho^{q(d_0,d_1,\eta,\beta,\mathfrak{p})} \le C_{t,L} \cdot \rho^q \text{ }, \forall q \in \left( 0, q(d_0,d_1,\eta,\beta,\mathfrak{p})\right).
\end{equation*}
\end{proof}

 \begin{lem}\label{lem:asconv} Let $(\theta, \mathbb{P}, T_\omega, \mu_\omega, \Gamma)$ be a system satisfying 
 (H\ref{hyp:weakhypplaincyl}), (H\ref{hyp:quenchedsep}), (H\ref{hyp:ball}), (H\ref{hyp:annulus}),  (H\ref{hyp:quenchdec}),  (H\ref{hyp:annealdec}), (H\ref{hyp:return}') and (H\ref{hyp:paramincenter}).

 Then: $\forall t {>}0, \forall n {\ge} 1, \forall (\rho_m)_{m \ge 1} {\searrow} 0$ with $\sum_{m\ge 1} {\rho_m}^q{<}\infty$ (for 
 some $0 {<} q {<}  q(d_0,d_1,\eta,\beta,\mathfrak{p})$), denoting $N = \lfloor \frac{t}{\hat\mu(\Gamma_{\rho_m})} \rfloor$ and $N' = \frac{N }{L}$\footnote{See footnote \ref{note:fractional}.}, one has:

1)
\begin{equation*}
        \lim\limits_{L \to \infty} \varliminf_{m \to \infty} \hspace{-6.5mm}\overline{\phantom{\varliminf}}  \sum_{j=0}^{N'-1} \mathbb{\mu}_\omega (Z^{\omega}_j=n) = t\alpha_1\lambda_n\text{, $\mathbb{P}$-a.s.}
\end{equation*} 

2)
\begin{equation*}
        \lim\limits_{L \to \infty} \varliminf_{m \to \infty} \hspace{-6.5mm}\overline{\phantom{\varliminf}}  \sum_{j=0}^{N'-1} \mathbb{\mu}_\omega (Z^{\omega}_j \ge 1) = t\alpha_1\text{, $\mathbb{P}$-a.s.}
\end{equation*} 

3)\begin{equation*}
    \lim_{m \to \infty} \sum_{j=0}^{N-1} \mu_{\theta^j \omega}(\Gamma_{\rho_m} (\theta^j\omega)) = t \text{, $\mathbb{P}$-a.s.},
\end{equation*}
where $Z^{\omega}_j = \sum_{l=jL}^{(j+1)L-1} \mathbbm{1}_{\Gamma_{\rho_m}(\theta^l \omega)} \circ T_\omega^l$. 
\end{lem}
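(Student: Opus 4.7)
The plan is to run a Chebyshev--Borel-Cantelli argument along the prescribed sequence $(\rho_m)_m$, combining Lemma \ref{lem:annealedentry} (expectation asymptotics) with Lemma \ref{lem:variance} (variance control). The summability $\sum_m \rho_m^q < \infty$ is tailor-made for this: Chebyshev converts the $\rho_m^q$ variance bound into a summable probability bound, and Borel-Cantelli then produces almost sure convergence. The only subtlety is that the variance constant $C_{t,L}$ grows with $L$ (it absorbs $a_{L-1}^2$, $\alpha^L$, $L^{-\mathfrak{p}}$, etc.), so one must fix $L$ before the Borel-Cantelli step and only afterwards handle the outer $L\to\infty$ limit by a countable intersection of full-measure sets.

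For parts (1)--(2), set $\mathfrak{W}^{(n)}_{\rho_m}(\omega) := \sum_{j=0}^{N'-1} \mu_\omega(Z_j^\omega = n)$ and analogously $\mathfrak{W}^{(\ge 1)}_{\rho_m}(\omega) := \sum_{j=0}^{N'-1}\mu_\omega(Z_j^\omega \ge 1)$. By $\theta$-invariance and the disintegration,
\begin{equation*}
\mathbb{E}_\mathbb{P}\bigl[\mathfrak{W}^{(n)}_{\rho_m}\bigr] \;=\; N'\,\hat\mu\bigl(Z^L_{\Gamma_{\rho_m}} = n\bigr) \;=\; \frac{\lfloor t/\hat\mu(\Gamma_{\rho_m})\rfloor}{L}\,\hat\mu\bigl(Z^L_{\Gamma_{\rho_m}} = n\bigr),
\end{equation*}
so Lemma \ref{lem:annealedentry} yields
$\lim_{L\to\infty}\overline{\varliminf_{m\to\infty}} \mathbb{E}_\mathbb{P}[\mathfrak{W}^{(n)}_{\rho_m}] = t\alpha_1\lambda_n$,
and the parallel computation gives $t\alpha_1$ for $\mathfrak{W}^{(\ge 1)}_{\rho_m}$. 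Lemma \ref{lem:variance} applies directly to $\mathfrak{W}^{(n)}_{\rho_m}$ and, by the same Lipschitz-approximation argument (the set $\{Z_0^{\omega'} \ge 1\}$ is a union of preimages of $\rho$-neighbourhoods, for which the exact same $\stackrel{\pm}{U}$ and $\stackrel{\pm}{\phi}$ constructions go through), it also applies to $\mathfrak{W}^{(\ge 1)}_{\rho_m}$. Chebyshev then gives, for every $\epsilon >0$ and every fixed $L$,
\begin{equation*}
\mathbb{P}\Bigl(\bigl|\mathfrak{W}^{(n)}_{\rho_m} - \mathbb{E}_\mathbb{P}\bigl[\mathfrak{W}^{(n)}_{\rho_m}\bigr]\bigr| > \epsilon\Bigr) \;\le\; \frac{C_{t,L}}{\epsilon^{2}}\,\rho_m^{q},
\end{equation*}
summable in $m$. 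Borel-Cantelli produces a full-measure $\Omega_{L,n} \subset \Omega$ on which $\mathfrak{W}^{(n)}_{\rho_m}(\omega) - \mathbb{E}_\mathbb{P}[\mathfrak{W}^{(n)}_{\rho_m}] \to 0$ as $m\to\infty$; the analogous set is built for $\mathfrak{W}^{(\ge 1)}_{\rho_m}$. On the full-measure intersection $\Omega_\infty := \bigcap_{L\ge 1}\bigcap_{n\ge 1} (\Omega_{L,n}\cap \Omega_L^{\ge 1})$, we may, for each fixed $L$, replace $\mathfrak{W}(\omega)$ by its expectation inside $\overline{\varliminf_m}$ and then take $\lim_L$, yielding the identities in (1) and (2).

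For part (3), note that $\mu_{\theta^j\omega}(\Gamma_{\rho_m}(\theta^j\omega)) = \mu_\omega\bigl((T_\omega^j)^{-1}\Gamma_{\rho_m}(\theta^j\omega)\bigr)$ by the family invariance $(T_\omega^j)_* \mu_\omega = \mu_{\theta^j\omega}$, so the sum in (3) is precisely the $L=n=1$ instance of $\mathfrak{W}$. The expectation is exactly $N\hat\mu(\Gamma_{\rho_m}) = \lfloor t/\hat\mu(\Gamma_{\rho_m})\rfloor \hat\mu(\Gamma_{\rho_m}) \to t$, with no iterated limit needed. The variance follows from the same argument as Lemma \ref{lem:variance} specialized to $L=n=1$ (the block structure collapses and hypothesis (H\ref{hyp:quenchedsep}) is not even invoked for $L=1$, while the Lipschitz approximation of $\mathbbm{1}_{\Gamma_\rho}$ via (H\ref{hyp:annulus}) and annealed decay proceeds identically), again giving a $C_t\rho_m^q$ bound. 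A single Chebyshev--Borel-Cantelli step delivers the $m\to\infty$ a.s. limit.

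The main obstacle is precisely the order of limits: the variance estimate in Lemma \ref{lem:variance} degrades badly in $L$, so one cannot run a single Borel-Cantelli scheme uniform in $L$. Fixing $L$ first and taking the countable intersection of full-measure sets $\Omega_{L,n}$ afterwards is what makes the quenched conclusion compatible with the annealed expectation asymptotics (which only converge after $\lim_L\overline{\varliminf_\rho}$). The remaining work is essentially bookkeeping in matching the $\overline{\varliminf_m}$ notation of the statement to the convergence modes provided by the two lemmata.
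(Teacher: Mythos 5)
Your proposal is correct and follows essentially the same route as the paper: Chebyshev plus Lemma \ref{lem:variance} to get a summable $C_{t,L}\rho_m^q$ tail bound, Borel--Cantelli for each fixed $L$ (with the countable intersection over $L$ and $n$ handling the outer limit), Lemma \ref{lem:annealedentry} for the expectation asymptotics, and the specialization $L=n=1$ for part (3). Your explicit remarks on the order of limits and on extending the variance bound to the $\{Z_j^\omega\ge 1\}$ counting are points the paper leaves implicit ("it suffices to repeat the argument"), but they do not change the argument.
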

 
\begin{proof}
Let $t, n$ and $ (\rho_m)_{m \ge 1}$ be as in the statement. Consider $L \ge 1$ and $m$ large enough so that $\rho_m \le \rho_{\operatorname{var}}(L)$, $N \ge 3$ and $N' \ge 3$. Denote also $\mathfrak{W}_\rho(\omega)=
\sum_{j=0}^{N'-1} \mu_\omega (Z^{\omega}_j=n)$.

Using Chebycheff's inequality combined with lemma \ref{lem:variance}, we get that
\begin{equation*}
     \mathbb{P}\!\left(\left| \mathfrak{W}_\rho -\mathbb{E}_\mathbb{P}(\mathfrak{W}_\rho)\right|>a\right)
 \le \frac{\operatorname{var}_\mathbb{P}(\mathfrak{W}_\rho)}{a^2}
 \le \frac{C_{t,L}}{a^2}\rho^q,
\end{equation*}and therefore, since $\sum_{m \ge 1} {\rho_m}^q < \infty$, Borel-Cantelli lemma let us conclude that
\begin{equation*}
    \lim_{m \to \infty} \left| \mathfrak{W}_{\rho_m} -\mathbb{E}_\mathbb{P}(\mathfrak{W}_{\rho_m})\right| = 0\text{, $\mathbb{P}$-a.s. }
\end{equation*}

On the other hand, 
\begin{equation*}
    \mathbb{E}_\mathbb{P}(\mathfrak{W}_{\rho_m}) = \frac{1}{L} \frac{t}{ \hat\mu(\Gamma_{\rho_m}) }   \hat\mu(Z_0 = n)   = t \frac{\hat\mu(Z^{L}_{\Gamma_{\rho_m}}  \ge 1)}{L \hat\mu(\Gamma_{\rho_m})} \frac{\hat\mu(Z^{L}_{\Gamma_{\rho_m}} = n)}{\hat\mu(Z^{L}_{\Gamma_{\rho_m}}  \ge 1)} ,
\end{equation*}so, by lemma \ref{lem:annealedentry} and the definition of $\lambda_n$, we have that
\begin{equation*}
    \lim\limits_{L \to \infty} \varliminf_{m \to \infty} \hspace{-6.5mm}\overline{\phantom{\varliminf}}  \hspace{2mm}\mathbb{E}_\mathbb{P}(\mathfrak{W}_{\rho_m}) = t \alpha_1 \lambda_n
\end{equation*}and therefore, combining the previous two centered limits, conclusion (1) follows:
\begin{equation*}
    \lim\limits_{L \to \infty} \varliminf_{m \to \infty} \hspace{-6.5mm}\overline{\phantom{\varliminf}} \hspace{2mm}  \mathfrak{W}_{\rho_m} = t \alpha_1 \lambda_n\text{, $\mathbb{P}$-a.s.}
\end{equation*}

For (2), it suffices to repeat the argument noticing that the new expectation will be driven by $t \frac{\hat\mu(Z^{L}_{\Gamma_{\rho_m}}  \ge 1)}{L \hat\mu(\Gamma_{\rho_m})}$, whose double limit is $t \alpha_1$.

For (3), it suffices to fix $L=1$ and $n=1$ in the above argument, and after the Borel-Cantelli step, notice that
\begin{equation*}
    \mathbb{E}_\mathbb{P}(\mathfrak{W}_{\rho_m}) = t \frac{\hat\mu(\Gamma_{\rho_m})}{\hat\mu(\Gamma_{\rho_m})}  \stackrel{m \to \infty}{\rightarrow} t.
\end{equation*}
 \end{proof}

%%%%%%%%%%%%%%%%%%%%%%%%%%%%%%%%%%%%%%%%%%%%%
%%%%%%%%%%%%%%%%%%%%%%%%%%%%%%%%%%%%%%%%%%%%%
%%%%%%%%%%  SECTION: PROOF OF THE THEOREM
\section{Proof of theorem \ref{thm:main}}\label{sec:proofmainthm}

%%%%%%%%%%%%%%%%%%%%%%%%%%%%%%%%%%%%%%%%%%%%
%%%%%%%%%%%%%%%%%%%%%%%%%%%%%%%%%%%%%%%%%%%%
%%%%%%%%%%%%%%%%%%%%%%%%%%% Set up the proof
\subsection{Applying the abstract approximation theorem} \label{set_up_T1}

Let $t >0$, $n \ge 1$ ($n=0$ is the leftover case) and $\omega \in \Omega$ be any.

Fix, once and for all, $(\rho_m)_{m \ge 1} \searrow 0$ fast enough so that $\sum_{m \ge 1} {\rho_m}^q < \infty$, for some $0< q < q(d_0,d_1,\eta,\beta,\mathfrak{p})$. 

Consider $L \ge n$, which should not be chosen as a function of previous variables.

Define $N := \big\lfloor \frac{t}{\hat\mu(\Gamma_{\rho_m})} \big\rfloor$ and $N'_{m,L} := \frac{N_{m}}{L}  \in \mathbb{N}_{\ge 3}$\footnote{See footnote \ref{note:fractional}.}. Let $v \in (0, d_0)$ and set $\Delta := {\rho_m}^{-v}$. We will consider $m$ large enough (depending on $L$) so that $N \ge 3$, $\Delta \ge 2$, $\rho \le \rho_{\operatorname{var}}(L)$, $L \le \lfloor \frac{N}{3} \rfloor$ and $\Delta < N'$. 

We want to study
\begin{equation*}
    \mu_\omega(Z^{\omega,N}_{\Gamma_{\rho_m} } = n) = \mu_\omega(\textstyle\sum\nolimits_{i=0}^{N-1} I^{\omega,m}_i =n )= \mu_\omega \left(  \sum\limits_{j=0}^{N'-1} \sum\limits_{i=jL}^{(j+1)L-1} I^{\omega,m}_i \right),
\end{equation*}
where $I^{\omega,m}_i = \mathbbm{1}_{\Gamma_{\rho_m}(\theta^i \omega)} \circ T^i_\omega$.

Theorem \ref{thm:approx} can be readily applied and gives
\begin{equation*}
    \left| \mu_\omega\left(Z^{\omega,N_m}_{\Gamma_{\rho_m}} = n\right) - \mu_\omega\left( \sum_{j=0}^{N'-1} \tilde{Z}^{\omega}_j = n \right) \right|
\end{equation*}
\begin{equation*}
    \lesssim \tilde{\mathcal{R}}^1_{\omega,m}(N, L,\Delta) + \mathcal{R}^1_{\omega,m}(N, L,\Delta) + \mathcal{R}^2_{\omega,m}(N,L,\Delta) + \mathcal{R}^3_{\omega,m}(N,L,\Delta) ,
\end{equation*}where $\tilde{Z}^{\omega}_j$ mimics $Z^{\omega}_j = \sum_{l=jL}^{(j+1)L-1} I_l^{\omega,m}$.

For the next sections, sections \ref{sec:R1estimate} to \ref{sec:leading}, it is enough to consider $\omega$ restricted to a $\mathbb{P}$-full measure set.

%%%%%%%%%%%%%%%%%%%%%%%%%%%%%%%%%%%%%%%%%%%
%%%%%%%%%%%%%%%%%%%%%%%%%%%%%%%%%%%%%%%%%%%
%%%%%%%%%%%%%%%%% ESTIMATING R1
\subsection{Estimating the error \texorpdfstring{$\mathcal{R}^1$}{R1}} \label{sec:R1estimate}

Recall that
\begin{equation*}
    \mathcal{R}^1_{\omega,m}(N, L,\Delta)=
\end{equation*}
\begin{equation*}
\resizebox{1\hsize}{!}{
\begin{tabular}{c}
$\displaystyle \!\sum_{j=0}^{N'-1} \max_{q \in [1,n]}  \sum_{u=1}^{q} \left| \mu_{\omega}\!\left(\!Z^{\omega}_j {=}{u}, \sum_{k=j+\Delta_m}^{N'-1} Z^{\omega}_k  {=} {q}{-}{u} \right) {-} \mu_\omega\Big(\! Z^{\omega}_j {=}{u} \! \Big)  \mu_\omega\!\left( \sum_{k=j+\Delta_m}^{N'-1} Z^{\omega}_k {=} {q}{-}{u} \right) \right|.$
\end{tabular}}
\end{equation*}

Now recycle the construction and notation used in the proof of lemma \ref{lem:variance} to control the term $(II)$: for a given $j \in [0,N'-1]$, writing $\omega' = \theta^{jL} \omega$ and considering $r \in (0,  {\rho_m}/2)$, $v \in [0,L-1]$, we once again have the objects: $U_{v,\omega'}, \stackrel{-}{U}_{v,r,\omega'}, \stackrel{+}{U}_{v,r,\omega'}, \mathcal{U}_{\omega'}, \stackrel{-}{\mathcal{U}}_{r,\omega'}, \stackrel{+}{\mathcal{U}}_{r,\omega'}, \stackrel{-}{\phi} \hspace{-3mm}\phantom{\phi}_{r}^{\omega'}$ and $ \stackrel{+}{\phi} \hspace{-3mm}\phantom{\phi}_{r}^{\omega'}$. Then:
\begin{equation*}
    \left| \mu_{\omega}\!\left(\!Z^{\omega}_j {=}{u}, \sum_{k=j+\Delta}^{N'-1} Z^{\omega}_k  {=} {q}{-}{u} \right) {-} \mu_\omega\Big(\! Z^{\omega}_j {=}{u} \! \Big)  \mu_\omega\!\left( \sum_{k=j+\Delta}^{N'-1} Z^{\omega}_k {=} {q}{-}{u} \right) \right|
\end{equation*}
\begin{equation*}
     \hspace{-62mm} = \Bigg| \mu_\omega\left( \sum_{i=jL}^{(j+1)L-1} I_i^{\omega,m} = u, \sum_{i=(j+\Delta)L}^{N-1} I_i^{\omega,m} = q-u\right)
\end{equation*}
\begin{equation*}
    \hspace{45mm} - \mu_\omega\left( \sum_{i=jL}^{(j+1)L-1} I_i^{\omega,m} = u \right) \mu_\omega\left( \sum_{i=(j+\Delta)L}^{N-1} I_i^{\omega,m} = q-u\right)\Bigg|
\end{equation*}
\begin{equation*}
    \hspace{-70mm} = \Bigg| \mu_{\omega'}\left( \sum_{i=0}^{L-1} I_i^{\omega',m} = u, \sum_{i=\Delta L}^{(N -1) - jL} I_i^{\omega',m} = q-u\right)
\end{equation*}
\begin{equation*}
    \hspace{30mm} - \mu_{\omega'}\left( \sum_{i=0}^{L-1} I_i^{\omega',m} = u\right) \mu_{\theta^{\Delta L} \omega'}\left(\sum_{i=0}^{(N-1) - (j+ \Delta)L} I_i^{\theta^{\Delta L}\omega',m} = q-u\right) \Bigg|.
\end{equation*} 
\begin{equation*}
    = \left| \mu_{\omega'}\left( \mathbbm{1}_{\mathcal{U}_{\omega'}} \mathbbm{1}_{\{V_j^{\omega,\Delta}=q-u\}} \circ T_{\omega'}^{\Delta L}\right) - \mu_{\omega'}\left( \mathbbm{1}_{\mathcal{U}_{\omega'}}  \right) \mu_{\theta^{\Delta L} \omega'}\left( \mathbbm{1}_{\{V_j^{\omega,\Delta}=q-u\}} \right) \right|
\end{equation*}where we used that $V_j^{\omega,\Delta} := \sum_{i=0}^{(N-1) - (j+ \Delta)L} I_i^{\theta^{\Delta L}\omega',m}$, and thus $\sum_{i=\Delta L}^{(N-1) - jL} I_i^{\omega',m} = V_j^{\omega,\Delta} \circ T_{\omega'}^{\Delta L}$,
\begin{equation*}
    \le \left| \mu_{\omega'}\left( \stackrel{\pm}{\phi} \hspace{-3mm}\phantom{\phi}_{r}^{\omega'} \mathbbm{1}_{\{V_j^{\omega,\Delta}=q-u\}} \circ T_{\omega'}^{\Delta L}\right) - \mu_{\omega'}\left( \mathbbm{1}_{\mathcal{U}_{\omega'}}  \right) \mu_{\theta^{\Delta L} \omega'}\left( \mathbbm{1}_{\{V_j^{\omega,\Delta}=q-u\}} \right) \right|,
\end{equation*}
where $\stackrel{\pm}{\phi} \hspace{-3mm}\phantom{\phi}_{r}^{\omega'}$ means that either $\stackrel{+}{\phi} \hspace{-3mm}\phantom{\phi}_{r}^{\omega'}$ or $\stackrel{-}{\phi} \hspace{-3mm}\phantom{\phi}_{r}^{\omega'}$ will make the inequality true,
\begin{equation*}
    \begin{tabular}{c l}
    $\le$ & $\left| \mu_{\omega'} \Big( \stackrel{\pm}{\phi} \hspace{-3mm}\phantom{\phi}_{r}^{\omega'} \mathbbm{1}_{\{V_j^{\omega,\Delta}=q-u\}} \circ T_{\omega'}^{\Delta L}\Big) -  \mu_{\omega'} \Big( \hspace{-1mm}\stackrel{\pm}{\phi} \hspace{-3mm}\phantom{\phi}_{r}^{\omega'}\Big) \mu_{\theta^{\Delta L} \omega' } \Big( \mathbbm{1}_{\{V_j^{\omega,\Delta}=q-u\}} \Big) \right|$\\
    $+$ & $\left| \left[ \mu_{\omega'}\Big( \hspace{-1mm}\stackrel{\pm}{\phi} \hspace{-3mm}\phantom{\phi}_{r}^{\omega'}\Big) - \mu_{\omega'}\left( \mathbbm{1}_{\mathcal{U}_{\omega'}}  \right) \right] \mu_{\theta^{\Delta L} \omega'}\left( \mathbbm{1}_{\{V_j^{\omega,\Delta}=q-u\}} \right) \right|$\\
    $=:$ & $(A) + (B)$.
    \end{tabular}
\end{equation*}

Now notice that
\begin{equation*}
    (A) \lesssim (\Delta L)^{-\mathfrak{p}}  \big\| \hspace{-1mm}\stackrel{\pm}{\phi} \hspace{-3mm}\phantom{\phi}_{r}^{\omega'}\big\|_{\operatorname{Lip}_{d_M}} 1 \lesssim (\Delta L)^{-\mathfrak{p}}  {a_{L-1}}^2/r^2,
\end{equation*}where the first estimate used (H\ref{hyp:quenchdec}) while the later used (H\ref{hyp:weakhypplaincyl}), (H\ref{hyp:quenchedsep}) and (H\ref{hyp:bddderiv}), as in the quenched argument in the proof of lemma \ref{lem:variance}\footnote{In the present passage, the a.s. validity of $\| \hspace{-1mm} \stackrel{+}{\phi} \hspace{-3mm}\phantom{\phi}_{r}^{\omega'}  \|_{\operatorname{Lip}_{d_M}} \le {a_{L-1}}^2/r^2$ would be enough, but, after recalling the argument of lemma \ref{lem:variance} we see that it actually holds for every $\omega$. The validity for every $\omega $ was important back then, but not here.}. 

Moreover,
\begin{equation*}
    (B) \le \mu_{\theta^{\Delta L} \omega'}\left( V_j^{\omega,\Delta}{=}{q}{-}{u} \right) \mu_{\omega'}(\stackrel{+}{\mathcal{U}}_{r,\omega'} \setminus \stackrel{-}{\mathcal{U}}_{r,\omega'} )  \stackrel{\text{(H\ref{hyp:annulus})}}{\lesssim} \mu_{\theta^{\Delta L} \omega'}\left( V_j^{\omega,\Delta}{=}{q}{-}{u} \right) L \frac{r^\eta}{{\rho_m}^\beta}.
\end{equation*}

Therefore
\begin{equation*}
    \mathcal{R}^1_{\omega,m}(N, L,\Delta) \lesssim  \!\sum_{j=0}^{N'-1} \max_{q \in [1,n]}  \sum_{u=1}^{q}  \left[ (\Delta L)^{-\mathfrak{p}} \frac{{a_{L-1}}^2}{r^2} + \mu_{\theta^{\Delta L} \omega'}\left( V_j^{\omega,\Delta}{=}{q}{-}{u} \right) L \frac{r^\eta}{{\rho_m}^\beta} \right]    
\end{equation*}
\begin{equation*}
    \le  \!\sum_{j=0}^{N'-1} \sum_{u=1}^{L} (\Delta L)^{-\mathfrak{p}}  \frac{{a_{L-1}}^2}{r^2} + L \frac{r^\eta}{{\rho_m}^\beta} \!\sum_{j=0}^{N'_{m,L}-1} \max_{q \in [1,N]}  \mu_{\theta^{\Delta L} \omega'}\left( V_j^{\omega,\Delta} \in [0,q] \right)
\end{equation*}
\begin{equation*}
    \lesssim N (\Delta L)^{-\mathfrak{p}}  \frac{{a_{L-1}}^2}{r^2} + N \frac{r^\eta}{{\rho_m}^\beta} \le N (\Delta L)^{-\mathfrak{p}}  \frac{{a_{L-1}}^2}{r^2} + L N \frac{r^\eta}{{\rho_m}^\beta},
\end{equation*}
where $V_j^{\omega,\Delta}$ takes values between $0$ and $N -(j+\Delta)L \le  N$.

%%%%%%%%%%%%%%%%%%%%%%%%%%%%%%%%%%%%%%%%%%%
%%%%%%%%%%%%%%%%%%%%%%%%%%%%%%%%%%%%%%%%%%%
%%%%%%%%%%%%%%%%% ESTIMATE OF R0
\subsection{Estimating the error \texorpdfstring{$\tilde{\mathcal{R}}^1$}{R1tilde}}\label{sec:R0estimate}

This section is going to follow the lines of the previous one, with minor modifications. 

Recall that
\begin{equation*}
    \tilde{\mathcal{R}}^1_{\omega,m}(N, L,\Delta)=
\end{equation*}
\begin{equation*}
    \!\sum_{j=0}^{N'-1} \max_{q \in [0,n]}   \left| \mu_{\omega}\!\left(\!Z^{\omega}_j {\ge}{1}, \sum_{k=j+\Delta}^{N'-1} Z^{\omega}_k  {=} {q} \right) {-} \mu_\omega\Big(\! Z^{\omega}_j {\ge}{1} \! \Big)  \mu_\omega\!\left( \sum_{k=j+\Delta}^{N'-1} Z^{\omega}_k {=} {q} \right) \right|.
\end{equation*}

For a given $j \in [0,N'-1]$, writing $\omega' = \theta^{jL}$ and considering $r \in (\rho_m/2)$, $v \in [0,L]$, recalling the objects introduced in the proof of lemma \ref{lem:variance}, we reuse $U_{v,\omega'}, \stackrel{-}{U}_{v,r,\omega'}$ and $ \stackrel{+}{U}_{v,r,\omega'}$, whereas $ \mathcal{U}_{\omega'}, \stackrel{-}{\mathcal{U}}_{r,\omega'}$ and $\stackrel{+}{\mathcal{U}}_{r,\omega'}$ are modified by including a union $\bigcup_{n=1}^L$ before the original definitions therein (in particular, $\{Z_0^{\omega'} \ge 1 \} = \mathcal{U}_{\omega'}$), while $ \stackrel{-}{\phi} \hspace{-3mm}\phantom{\phi}_{r}^{\omega'}$ and $ \stackrel{+}{\phi} \hspace{-3mm}\phantom{\phi}_{r}^{\omega'}$ are kept the same (but considering the previous modification).

Following the same steps and notation from the previous section, we get that
\begin{equation*}
\begin{tabular}{c}
     $\displaystyle \left| \mu_{\omega}\!\left(\!Z^{\omega}_j {\ge}{1}, \sum_{k=j+\Delta}^{N'-1} Z^{\omega}_k  {=} {q} \right) {-} \mu_\omega\Big(\! Z^{\omega}_j {\ge}{1} \! \Big)  \mu_\omega\!\left( \sum_{k=j+\Delta}^{N'-1} Z^{\omega}_k {=} {q} \right) \right|$ \\
      $\displaystyle \le \left| \mu_{\omega'} \Big( \stackrel{\pm}{\phi} \hspace{-3mm}\phantom{\phi}_{r}^{\omega'} \mathbbm{1}_{\{V_j^{\omega,\Delta}=q\}} \circ T_{\omega'}^{\Delta L}\Big) -  \mu_{\omega'} \Big( \hspace{-1mm}\stackrel{\pm}{\phi} \hspace{-3mm}\phantom{\phi}_{r}^{\omega'}\Big) \mu_{\theta^{\Delta L} \omega' } \Big( \mathbbm{1}_{\{V_j^{\omega,\Delta}=q\}} \Big) \right|$\\
    $\displaystyle + \left| \left[ \mu_{\omega'}\Big( \hspace{-1mm}\stackrel{\pm}{\phi} \hspace{-3mm}\phantom{\phi}_{r}^{\omega'}\Big) - \mu_{\omega'}\left( \mathbbm{1}_{\mathcal{U}_{\omega'}}  \right) \right] \mu_{\theta^{\Delta L} \omega'}\left( \mathbbm{1}_{\{V_j^{\omega,\Delta}=q\}} \right) \right|$\\
    $=: (A) + (B)$.
\end{tabular}
\end{equation*}

As before,
\begin{equation*}
    (A) \lesssim (\Delta L)^{-\mathfrak{p}}  \big\| \hspace{-1mm}\stackrel{\pm}{\phi} \hspace{-3mm}\phantom{\phi}_{r}^{\omega'}\big\|_{\operatorname{Lip}_{d_M}} 1 \lesssim  (\Delta L)^{-\mathfrak{p}}  {a_{L-1}}^2/r^2.
\end{equation*}For the first inequality we use (H\ref{hyp:quenchdec}). For the second, we adapt the previous reasoning as follows. Intuitively, the Lipschitz constant of, say, the modified function $ \stackrel{-}{\phi} \hspace{-3mm}\phantom{\phi}_{r}^{\omega'}$ is bounded by the inverse of $d\big( \hspace{-1mm}\stackrel{-}{\mathcal{U}} \hspace{-4.5mm}{\phantom{\mathcal{U}}_{r,\omega'}}, \mathcal{U}_{\omega'}^c \big)$. For a point to $x \in  \mathcal{U}_{\omega'}^c$, with no hits, to be minimally displaced to $ \stackrel{-}{\mathcal{U}} \hspace{-4.5mm}{\phantom{\mathcal{U}}_{r,\omega'}}$, among $x$ itself being displaced or the consequently-displaced points in its orbit, a) at least one $r$-stringent hit has to be created while b) the other instances should turned into $r$-stringent non-hits (if they are not already). The situation where this would occur with minimal displacement is one where (b) starts already fulfilled and only (a) has to be accomplished by displacing $x$ in such that its $L-1$ iterate changes from a non-hit to a $r$-stringent hit. This can be made with a minimum displacement of $r/a_{L-1}$, where again we use (H\ref{hyp:weakhypplaincyl}), (H\ref{hyp:bddderiv}) and (H\ref{hyp:quenchedsep}).

Moreover,
\begin{equation*}
    (B) \le \mu_{\theta^{\Delta L} \omega'}\left( V_j^{\omega,\Delta}=q \right) \mu_{\omega'}(\stackrel{+}{\mathcal{U}}_{r,\omega'} \setminus \stackrel{-}{\mathcal{U}}_{r,\omega'} )  \stackrel{\text{(H\ref{hyp:annulus})}}{\lesssim} \mu_{\theta^{\Delta L} \omega'}\left( V_j^{\omega,\Delta}=q \right) L^2 \frac{r^\eta}{{\rho_m}^\beta}.
\end{equation*}

Therefore
\begin{equation*}
    \tilde{\mathcal{R}}^1_{\omega,m}(N, L,\Delta) \lesssim  \!\sum_{j=0}^{N'-1} \max_{q \in [0,n]}    \left[ (\Delta L)^{-\mathfrak{p}} \frac{{a_{L-1}}^2}{r^2}  + \mu_{\theta^{\Delta L} \omega'}\left( V_j^{\omega,\Delta}=q \right) L^2 \frac{r^\eta}{{\rho_m}^\beta} \right]    
\end{equation*}
\begin{equation*}
    \le  \!\sum_{j=0}^{N'-1} (\Delta L)^{-\mathfrak{p}}  \frac{{a_{L-1}}^2}{r^2} + L^2 \frac{r^\eta}{{\rho_m}^\beta} \!\sum_{j=0}^{N'-1} \max_{q \in [0,N]}  \mu_{\theta^{\Delta L} \omega'}\left( V_j^{\omega,\Delta} \in [0,q] \right)
\end{equation*}
\begin{equation*}
    \lesssim N' (\Delta L)^{-\mathfrak{p}}  \frac{{a_{L-1}}^2}{r^2} + L N \frac{r^\eta}{{\rho_m}^\beta} \le N (\Delta L)^{-\mathfrak{p}}  \frac{{a_{L-1}}^2}{r^2} + L N \frac{r^\eta}{{\rho_m}^\beta},
\end{equation*}
where $V_j^{\omega,\Delta}$ takes values between $0$ and $N -(j+1)L  \le N$.

%%%%%%%%%%%%%%%%%%%%%%%%%%%%%%%%%%%%%%%%%%%
%%%%%%%%%%%%%%%%%%%%%%%%%%%%%%%%%%%%%%%%%%%
%%%%%%%%%%%%%%%%% ESTIMATE OF R2
\subsection{Estimating the error \texorpdfstring{$\mathcal{R}^2$}{R2}}\label{sec:R2estimate}

To start
\begin{equation*}
    \mathcal{R}^2_{\omega,m}(N,L,\Delta) = \sum_{j=0}^{N'-1} \mu_\omega(Z^{\omega}_j \ge 1, \sum_{k=j+1}^{j+\Delta-1} Z^{\omega}_k \ge 1)
    \le \sum_{j=0}^{N'-1} \sum_{k=j+1}^{j+\Delta-1}\mu_\omega(Z^{\omega}_j \ge 1,  Z^{\omega}_k \ge 1) 
\end{equation*}
where we reverse the double sum and single out the $k=j+1$ terms, to get
\begin{equation*}
 \sum_{k=1}^{N'+\Delta-2} \sum_{j=(k-\Delta+1)\vee 0}^{(k-2)\wedge(N'-1)} \mu_\omega(Z^{\omega}_j \ge 1,  Z^{\omega}_k \ge 1) + \sum_{k=1}^{N'} \mu_\omega(Z^{\omega}_{k-1} \ge 1,  Z^{\omega}_k \ge 1) =: \text{ }(I) + (II)
\end{equation*}

To estimate $(I)$ we notice that:

\begin{equation*}
\resizebox{1\hsize}{!}{
\begin{tabular}{@{}c @{} c @{} l @{}}
   $(I)$ & $\le$ & $\displaystyle \sum_{k=1}^{N'+\Delta-2} \sum_{j=(k-\Delta+1)\vee 0}^{(k-2)\wedge(N'-1)} \sum_{i=jL}^{(j+1)L-1} \sum_{l=kL}^{(k+1)L-1} \mu_\omega\left( (T_\omega^i)^{-1} \Gamma_{\rho_m}(\theta^i \omega) \cap (T_\omega^l)^{-1} \Gamma_{\rho_m}(\theta^l \omega) \right) \text{ }(l > i)$\\
    & $\displaystyle \le$ & $\displaystyle \sum_{k=1}^{N'+\Delta-2} \sum_{j=(k-\Delta+1)\vee 0}^{(k-2)\wedge(N'-1)} \sum_{i=jL}^{(j+1)L-1} \sum_{l=kL}^{(k+1)L-1} \mu_{\omega'}\left(  \Gamma_{\rho_m}( \omega') \cap (T_{\omega'}^{l-i})^{-1} \Gamma_{\rho_m}(\theta^{l-i} \omega') \hspace{0.5mm} \cap \hspace{-0.5mm} \stackrel{++}{\mathcal{C}} \hspace{-4mm} \phantom{C}_{l-i}^{\omega'} \right) \text{ }$ \\
    & $\displaystyle +$ & $\displaystyle \sum_{k=1}^{N'+\Delta-2} \sum_{j=(k-\Delta+1)\vee 0}^{(k-2)\wedge(N'-1)} \sum_{i=jL}^{(j+1)L-1} \sum_{l=kL}^{(k+1)L-1} \mu_{\omega'}\left(  \Gamma_{\rho_m}( \omega') \cap (T_{\omega'}^{l-i})^{-1} \Gamma_{\rho_m}(\theta^{l-i} \omega') \hspace{0.5mm} \cap \left[ \hspace{-0.5mm} \stackrel{+-}{\mathcal{C}} \hspace{-3.5mm} \phantom{C}_{l-i}^{\omega'} \cup \hspace{-0.5mm} \stackrel{-}{\mathcal{C}} \hspace{-3mm} \phantom{C}_{l-i}^{\omega'} \right] \right)$\\
    & $=:$ & $ (I_{\operatorname{good}}) + (I_{\operatorname{bad}})$
\end{tabular}}
\end{equation*}
where $\omega' := \theta^i \omega$.

To estimate $(I_{\operatorname{good}})$ we begin evaluating the following:
\begin{equation*}
    \mu_{\omega'}\left( \stackrel{++}{\mathcal{C}} \hspace{-4mm} \phantom{C}_{l-i}^{\omega'} \cap \Gamma_{\rho_m}( \omega') \cap (T_{\omega'}^{l-i})^{-1} \Gamma_{\rho_m}(\theta^{l-i} \omega') \right)
\end{equation*}
\begin{equation*}
    \le  \sum_{\substack{\xi = \varphi(\operatorname{dom}(\varphi)) \in \stackrel{++}{C} \hspace{-2.5mm} \phantom{C}^{\omega'}_{l-i}: \\ \xi \cap \Gamma_{\rho_m}(\omega') \neq \emptyset}} \frac{\mu_{\omega'}|_\xi \left( \xi \cap (T_{\omega'}^{l-i})^{-1} \Gamma_{\rho_m}(\theta^{l-i} \omega') \right)}{\mu_{\omega'}|_\xi(\xi)} \mu_{\omega'}(\xi),
\end{equation*}where, from (H\ref{hyp:dist}), $\varphi \in \operatorname{IB}(T_{\omega'}^{l-i})$ implies $\mu_{\omega'}|_{\varphi(\operatorname{dom}(\varphi))} = {J_\varphi}^{-1} \left[ \varphi_*(\mu_{\theta^{l-i}\omega'}|_{\operatorname{dom}(\varphi)}) \right] $, and so
\begin{equation*}
    \le  \sum_{\xi \text{ as above}}  \frac{ \displaystyle \left[{J_\varphi}^{-1} \big[ \varphi_*(\mu_{\theta^{l-i}\omega'}|_{\operatorname{dom}(\varphi)}) \big] \right] \left( \varphi(\operatorname{dom}(\varphi)) \cap (T_{\omega'}^{l-i})^{-1} \Gamma_{\rho_m}(\theta^{l-i} \omega') \right)}{\displaystyle \left[{J_\varphi}^{-1} \big[ \varphi_*(\mu_{\theta^{l-i}\omega'}|_{\operatorname{dom}(\varphi)}) \big] \right](\varphi(\operatorname{dom}(\varphi)))}  \mu_{\omega'}(\xi)
\end{equation*}
\begin{equation*}
    \le  \sum_{\xi \text{ as above}} \frac{\sup_{x \in \xi} {J_\varphi}^{-1}(x) }{\inf_{x \in \xi} {J_\varphi}^{-1}(x)} 
    \frac{ \displaystyle   \mu_{\theta^{l-i}\omega'}|_{\operatorname{dom}(\varphi)}   \left( \operatorname{dom}(\varphi) \cap {\varphi^{-1}} (T_{\omega'}^{l-i})^{-1} \Gamma_{\rho_m}(\theta^{l-i} \omega') \right)}{\displaystyle 
     \mu_{\theta^{l-i}\omega'}|_{\operatorname{dom}(\varphi)}  (\operatorname{dom}(\varphi))}  \mu_{\omega'}(\xi)
\end{equation*}
\begin{equation*}
\resizebox{1\hsize}{!}{
\begin{tabular}{@{}c @{}}
    $\displaystyle \stackrel[\text{(H\ref{hyp:nondegen})}]{\text{(H\ref{hyp:dist})}}{\lesssim} (l-i)^{\mathfrak{d}} \iota^{-1} \mu_{\theta^l \omega} ( \Gamma_{\rho_m}(\theta^l \omega) ) \sum_{\xi \text{ as above}}   \mu_{\omega'}(\xi) \stackrel{\text{(H\ref{hyp:cov})}}{\le}  (l-i)^{\mathfrak{d}} \iota^{-1} \mu_{\theta^l \omega} ( \Gamma_{\rho_m}(\theta^l \omega) ) \mathcal{N} \mu_{\omega'}\left( \bigcup_{\xi \text{ as above}}  \xi  \right)$
\end{tabular}}
\end{equation*}
\begin{equation*}
    \stackrel{\text{(H\ref{hyp:backcontr})}}{\le} (l-i)^{\mathfrak{d}} \iota^{-1} \mu_{\theta^l \omega} ( \Gamma_{\rho_m}(\theta^l \omega) ) \mathcal{N} \mu_{\omega'}\left( B_{D (l-i)^{-\kappa}}(\Gamma_{\rho_m}(\omega') )   \right)
\end{equation*}
\begin{equation*}
\resizebox{1\hsize}{!}{
\begin{tabular}{@{}c @{}}
    $\displaystyle \stackrel{\text{(H\ref{hyp:ball})}}{\le} (l-i)^{\mathfrak{d}} \iota^{-1} \mu_{\theta^l \omega} ( \Gamma_{\rho_m}(\theta^l \omega) ) \mathcal{N} C_0 (\rho_m + D (l-i)^{-\kappa} )^{d_0} \lesssim  \mu_{\theta^l \omega} ( \Gamma_{\rho_m}(\theta^l \omega) ) (l-i)^{\mathfrak{d}}  \left[ {\rho_m}^{d_0} {+} (l{-}i)^{-\kappa d_0} \right].$
\end{tabular}}
\end{equation*}

Then
\begin{equation*}
\resizebox{1\hsize}{!}{
\begin{tabular}{@{}c @{} c @{} l @{}}
       $(I_{\operatorname{good}})$  &  $\le$ & $\displaystyle \sum_{k=1}^{N'+\Delta-2} \sum_{j=(k-\Delta+1)\vee 0}^{(k-2)\wedge(N'-1)} \sum_{i=jL}^{(j+1)L-1} \sum_{l=kL}^{(k+1)L-1} \mu_{\theta^l \omega} ( \Gamma_{\rho_m}(\theta^l \omega) ) (l-i)^{\mathfrak{d}}  \left[ {\rho_m}^{d_0} {+} (l{-}i)^{-\kappa d_0} \right]$  \\
       & $=$  & $\displaystyle \sum_{k=1}^{N'+\Delta-2} \sum_{j=(k-\Delta+1)\vee 0}^{(k-2)\wedge(N'-1)} \sum_{l=kL}^{(k+1)L-1} \left(  \mu_{\theta^l \omega} ( \Gamma_{\rho_m}(\theta^l \omega) )  \sum_{i=jL}^{(j+1)L-1}  (l-i)^{\mathfrak{d}} \left[ {\rho_m}^{d_0} {+} (l{-}i)^{-\kappa d_0} \right] \right) $
\end{tabular}}
\end{equation*}
where, for each $l$ fixed, as $i$ runs, we have $l-i \in [kL -jL -L +1,kL-jL+L-1]$, so
\begin{equation*}
    \le \sum_{k=1}^{N'+\Delta-2} \sum_{j=(k-\Delta+1)\vee 0}^{(k-2)\wedge(N'-1)} \sum_{l=kL}^{(k+1)L-1} \left(  \mu_{\theta^l \omega} ( \Gamma_{\rho_m}(\theta^l \omega) )  \sum_{s=kL-jL-L+1}^{kL-jL+L-1}  s^{\mathfrak{d}} \left[ {\rho_m}^{d_0} {+} s^{-\kappa d_0} \right] \right)
\end{equation*}
\begin{equation*}
    = \sum_{k=1}^{N'+\Delta-2} \sum_{j=(k-\Delta+1)\vee 0}^{(k-2)\wedge(N'-1)} \left[ \left( \sum_{l=kL}^{(k+1)L-1}  \mu_{\theta^l \omega} ( \Gamma_{\rho_m}(\theta^l \omega) )   \right) \left( \sum_{s=kL-jL-L+1}^{kL-jL+L-1} s^{\mathfrak{d}} \left[ {\rho_m}^{d_0} {+} s^{-\kappa d_0} \right] \right) \right]
\end{equation*}
\begin{equation*}
    \hspace{8mm}\le \sum_{k=1}^{N'+\Delta-2} \left( \sum_{l=kL}^{(k+1)L-1}  \mu_{\theta^l \omega} ( \Gamma_{\rho_m}(\theta^l \omega) ) \right) \left(\sum_{j=(k-\Delta+1)\vee 0}^{(k-2)\wedge(N'-1)}  \sum_{s=kL-jL-L+1}^{kL-jL+L-1}  s^{\mathfrak{d}} \left[ {\rho_m}^{d_0} {+} s^{-\kappa d_0} \right]\right)
\end{equation*}
where $s \in [L+1,3 \Delta L]$\footnote{The interval where $s$ ranges has length $2L$ and it is translated by $L$ when $j$ moves one unit, therefore the original and the new interval overlap by half, so eventual repetitions are more than compensated by a factor of two.}, so
\begin{equation*}
    \lesssim \sum_{k=1}^{N'+\Delta-2} \left( \sum_{l=kL}^{(k+1)L-1}  \mu_{\theta^l \omega} ( \Gamma_{\rho_m}(\theta^l \omega) ) \right) \left(\sum_{u=L+1}^{3\Delta L}   u^{\mathfrak{d}} \left[ u^{-\kappa d_0} {+} {\rho_m}^{d_0} \right]\right)
\end{equation*}
\begin{equation*}
    \lesssim \sum_{k=1}^{N'+\Delta-2} \left( \sum_{l=kL}^{(k+1)L-1}  \mu_{\theta^l \omega} ( \Gamma_{\rho_m}(\theta^l \omega) ) \right) \left(L^{\mathfrak{d}-\kappa d_0 +1} + ( \Delta L)^{\mathfrak{d}+1} {\rho_m}^{d_0} \right),
\end{equation*}where for the first term in the square bracket we have used that, for $\zeta>1$, $\sum_{n=m}^\infty n^{-\zeta} \lesssim m^{-\zeta +1}$ together with $\mathfrak{d} - \kappa d_0 < -1$, which is guaranteed by (H\ref{hyp:parampolynomialseries}), whereas for the second we have used that $u^{\mathfrak{d}}$ is increasing and the summation interval is bounded above by $3\Delta L$.

We will leave $(I_{\operatorname{bad}})$ to the end.

For $(II)$, we consider $L' <L $ and proceed as follows
\begin{equation*}
    \sum_{k=1}^{N'} \mu_\omega(Z^{\omega}_{k-1} \ge 1,  Z^{\omega}_k \ge 1) = \sum_{k=1}^{N'} \mu_\omega\left(\sum_{i=(k-1)L}^{kL-1} I_i^{\omega,m} \ge 1, \sum_{l=kL}^{kL+L'-1} I_l^{\omega,m} + \sum_{l=kL+L'}^{(k+1)L-1} I_l^{\omega,m} \ge 1\right)
\end{equation*}
\begin{equation*}
    \le \sum_{k=1}^{N'} \mu_\omega\left(\sum_{i=(k-1)L}^{kL-1} I_i^{\omega,m} \ge 1, \sum_{l=kL}^{kL+L'-1} I_l^{\omega,m} \ge 1\right) +  \mu_\omega\left(\sum_{i=(k-1)L}^{kL-1} I_i^{\omega,m} \ge 1,  \sum_{l=kL+L'}^{(k+1)L-1} I_l^{\omega,m} \ge 1\right)
\end{equation*}and, denoting $\omega' = \theta^i \omega$,
\begin{equation*}
    \begin{tabular}{c @{} l}
       $\le$  & $\displaystyle \sum_{k=1}^{N'} \sum_{l=kL}^{kL+L'-1} \mu_{\theta^l \omega}\left( \Gamma_{\rho_m}(\theta^l \omega)\right)$ \\
        $+$ & $\displaystyle \sum_{k=1}^{N'+\Delta-2} \sum_{i=(k-1)L}^{kL-1} \sum_{l=kL+L'}^{(k+1)L-1}\mu_{\omega'}\left(\Gamma_{\rho_m}(\omega') \cap (T_{\omega'}^{l-i})^{-1} \Gamma_{\rho_m}(\theta^{l-i}\omega') \hspace{0.5mm} \cap \hspace{-0.5mm} \stackrel{++}{\mathcal{C}} \hspace{-4mm} \phantom{C}_{l-i}^{\omega'} \right)$\\
        $+$ & $\displaystyle \sum_{k=1}^{N'} \sum_{i=(k-1)L}^{kL-1} \sum_{l=kL+L'}^{(k+1)L-1}\mu_{\omega'}\left( \Gamma_{\rho_m}(\omega') \cap (T_{\omega'}^{l-i})^{-1} \Gamma_{\rho_m}(\theta^{l-i}\omega') \hspace{0.5mm} \cap \left[ \hspace{-0.5mm} \stackrel{+-}{\mathcal{C}} \hspace{-4mm} \phantom{C}_{l-i}^{\omega'} \cup \stackrel{-}{\mathcal{C}} \hspace{-3mm} \phantom{C}_{l-i}^{\omega'}  \right] \right)$\\
        $=:$ & $(II_{\operatorname{rest}}) + (II_{\operatorname{good}}) + (II_{\operatorname{bad}})$.
    \end{tabular}
\end{equation*}

The term $(II_{\operatorname{rest}})$ will not be improved, whereas the term $(II_{\operatorname{good}})$ is approached just like $(I_{\operatorname{good}})$, as follows:
\begin{equation*}
    (II_{\operatorname{good}}) \lesssim \sum_{k=1}^{N'} \sum_{l=kL+L'}^{(k+1)L-1} \left( \mu_{\theta^l \omega} (\Gamma_{\rho_m}(\theta^l \omega)) \sum_{i=(k-1)L}^{kL-1} (l-i)^{\mathfrak{d}} \left[ {\rho_m}^{d_0} + (l-i)^{-\kappa d_0} \right]  \right)
\end{equation*}
where, for each $l$ fixed, as $i$ runs, we have $l-i \in [L' +1,2L-1]$, so
\begin{equation*}
    \le \sum_{k=1}^{N'} \left( \sum_{l=kL+L'}^{(k+1)L-1} \mu_{\theta^l \omega} (\Gamma_{\rho_m}(\theta^l \omega))  \right)  \left(  \sum_{u=L'+1}^{2L-1} u^{\mathfrak{d}} \left[ {\rho_m}^{d_0} + u^{-\kappa d_0} \right]  \right)
\end{equation*}
\begin{equation*}
    \lesssim \sum_{k=1}^{N'} \left( \sum_{l=kL}^{(k+1)L-1} \mu_{\theta^l \omega} (\Gamma_{\rho_m}(\theta^l \omega))  \right) \left( {L'}^{\mathfrak{d}-\kappa d_0+1} +L^{\mathfrak{d}+1} {\rho_m}^{d_0} \right).
\end{equation*}

Now we combine $(I_{\operatorname{bad}})$ and $(II_{\operatorname{bad}})$ and their domain of summation\footnote{Notice that the initial $L'$-strip of the first component of the original summation has already been singled out inside $(II_{\operatorname{rest}})$.} to see that
\begin{equation*}
\begin{tabular}{c c l }
    $ (I_{\operatorname{bad}}) + (II_{\operatorname{bad}})$ & $\lesssim$ &  $\displaystyle \sum_{i=0}^{N-1} \sum_{l=i+L+L'}^{i+\Delta L} \mu_{\theta^i \omega} \left( \Big[ \hspace{-1mm} \stackrel{+-}{\mathcal{C}} \hspace{-4mm} \phantom{C}_{l-i}^{\theta^i \omega} \cup \stackrel{-}{\mathcal{C}} \hspace{-3mm} \phantom{C}_{l-i}^{\theta^i \omega} \Big] \cap \Gamma_{\rho_m}(\theta^i \omega) \right)$ \\
    & $=$ & $\displaystyle \sum_{i=0}^{N-1} \sum_{s=L+L'}^{\Delta L} \mu_{\theta^i \omega} \left(\Big[ \hspace{-1mm} \stackrel{+-}{\mathcal{C}} \hspace{-4mm} \phantom{C}_{s}^{\theta^i \omega} \cup \stackrel{-}{\mathcal{C}} \hspace{-3mm} \phantom{C}_{s}^{\theta^i \omega} \Big]  \cap \Gamma_{\rho_m}(\theta^i \omega) \right)$ \\ 
     & $\le$&  $\displaystyle \sum_{s=L'}^{\Delta L} \sum_{i=0}^{N-1}  \mu_{\theta^i \omega} \left( \Big[ \hspace{-1mm} \stackrel{+-}{\mathcal{C}} \hspace{-4mm} \phantom{C}_{s}^{\theta^i \omega} \cup \stackrel{-}{\mathcal{C}} \hspace{-3mm} \phantom{C}_{s}^{\theta^i \omega} \Big] \cap \Gamma_{\rho_m}(\theta^i \omega) \right)$.
\end{tabular}
\end{equation*}

Combining the bounds of $(I_{\operatorname{good}})$ and $(II_{\operatorname{good}})$, we conclude that
\begin{equation*}
\begin{tabular}{c @{} c @{} l}
    $\mathcal{R}^2_{\omega,m}(N, L, \Delta)$ & $\lesssim$ & $\displaystyle  \sum_{l=0}^{5N-1}  \mu_{\theta^l \omega} ( \Gamma_{\rho_m}(\theta^l \omega) )  \left({L'}^{\mathfrak{d}-\kappa d_0 +1} + ( \Delta L)^{\mathfrak{d}+1} {\rho_m}^{d_0} \right)$\\
     & $+$ & $\displaystyle \sum_{k=1}^{N'} \sum_{l=kL}^{kL+L'-1} \mu_{\theta^l \omega}\left( \Gamma_{\rho_m}(\theta^l \omega)\right) \hspace{-0.5mm}{+}\hspace{-0.5mm} \sum_{s=L'}^{\Delta L} \sum_{i=0}^{N-1}  \mu_{\theta^i \omega} \left(\Big[ \hspace{-1mm} \stackrel{+-}{\mathcal{C}} \hspace{-4mm} \phantom{C}_{s}^{\theta^i \omega} \cup \stackrel{-}{\mathcal{C}} \hspace{-3mm} \phantom{C}_{s}^{\theta^i \omega} \Big] {\cap} \Gamma_{\rho_m}(\theta^i \omega) \right)$.
\end{tabular}
\end{equation*}

%%%%%%%%%%%%%%%%%%%%%%%%%%%%%%%%%%%%%%%%%%%
%%%%%%%%%%%%%%%%%%%%%%%%%%%%%%%%%%%%%%%%%%%
%%%%%%%%%%%%%%%%% ESTIMATE OF R3
\subsection{Estimating the error \texorpdfstring{$\mathcal{R}^3$}{R3}}\label{sec:R3estimate}

Here we use (H\ref{hyp:ball}) to see that
\begin{equation*}
\begin{tabular}{c  c  l}
    $\displaystyle \mathcal{R}_{\omega,m}^3(N,L,\Delta)$ & $=$ & $\displaystyle \sum_{i=0}^{N-1}\sum_{\ell=0\vee (i-{ \Delta}L)}^i 
   \mu_{\theta^i\omega}(\Gamma_{\rho_m}(\theta^i\omega)) \mu_{\theta^\ell\omega}(\Gamma_{\rho_m}(\theta^\ell\omega)))$  \\
     & $\lesssim$ & $\displaystyle { \Delta}L\text{ }{\rho_m}^{d_0}\sum_{i=0}^{N-1} \mu_{\theta^i\omega}(\Gamma_{\rho_m}(\theta^i\omega))$,
\end{tabular} 
\end{equation*}
which, noticing that $\Delta L \le (\Delta L)^{\mathfrak{d}+1}$, reveals to be bounded above by $ \mathcal{R}_{\omega,m}^2(N_m,L,\Delta_m)$.

%%%%%%%%%%%%%%%%%%%%%%%%%%%%%%%%%%%%%%%%%%%
%%%%%%%%%%%%%%%%%%%%%%%%%%%%%%%%%%%%%%%%%%%
%%%%%%%%%%%%%%%%% ESTIMATE OF R
\subsection{Controlling the total error}\label{sec:totalerror} Put $r={\rho_m}^{w}$ ($w>1$) 
and $L'=L^\alpha$ ($0<\alpha<1$). Then 
\begin{equation*}
   \hspace{-77mm} \left| \mu_\omega\left(Z^{\omega,N}_{\Gamma_{\rho_m}} = n\right) - \mu_\omega\left( \sum_{j=0}^{N'-1} \tilde{Z}^{\omega}_j = n \right) \right| 
\end{equation*}
\begin{equation}\label{eq:totalerrorcontrol1}
  \hspace{25mm}  \begin{tabular}{c @{}  l}
       $\lesssim$  &  $\displaystyle \hspace{2mm}{a_{L-1}}^2  {\rho_m}^{\mathfrak{p}v - 2w -d_1} + L {\rho_m}^{w \eta-\beta-d_1}$ \\
       $+$ &  $\displaystyle  \sum_{l=0}^{5N-1}  \mu_{\theta^l \omega} ( \Gamma_{\rho_m}(\theta^l \omega) )  \left({L'}^{\mathfrak{d}-\kappa d_0 +1} + L^{\mathfrak{d}+1} {\rho_m}^{d_0 - v (\mathfrak{d}+1)} \right)$\\
      $+$ & $\displaystyle \sum_{k=1}^{N'} \sum_{l=kL}^{kL+L'-1} \mu_{\theta^l \omega}\left( \Gamma_{\rho_m}(\theta^l \omega)\right) \hspace{-0.5mm}{+}\hspace{-0.5mm} \sum_{s=L'}^{\Delta_m L} \sum_{i=0}^{N-1}  \mu_{\theta^i \omega}\left(\Big[ \hspace{-1mm} \stackrel{+-}{\mathcal{C}} \hspace{-4mm} \phantom{C}_{s}^{\theta^i \omega} \cup \stackrel{-}{\mathcal{C}} \hspace{-3mm} \phantom{C}_{s}^{\theta^i \omega} \Big] \cap \Gamma_{\rho_m}(\theta^i \omega)\right)$,
    \end{tabular}
\end{equation}where in the first line of the RHS accounts for both $\mathcal{R}^1$ and $\tilde{\mathcal{R}}^1$.

Now we fine-tune parameters $v \in (0,d_0)$ ($\Delta = {\rho_m}^{-v}$) and $w>1$ ($r= {\rho_m}^w$). In the last equation, we need the exponents accompanying $\rho$ to be strictly positive. In particular, we need
\begin{equation*}
    w > \frac{\beta+d_1}{\eta} \vee 1 \text{, } p v - 2w - d_1 > 0 \text{ and } d_0-v(\mathfrak{d}+1)>0.
\end{equation*}The space of solutions $(w,v) \in (1,\infty) \times (0,d_0)$ to those inequalities is non-empty triangle if $\mathfrak{p} > \frac{2\left( \frac{ \beta + d_1}{\eta} \vee 1 \right) + d_1}{d_0/(\mathfrak{d}+1)},$ which is guaranteed by (H\ref{hyp:parampositrhoexp}). Let's fix any such solution $(w,v)$.

We will take double limits of the type $\varlimsup_{L \to \infty} \varlimsup_{m \to \infty}$ on the RHS equation (\ref{eq:totalerrorcontrol1}). To fist take $\varlimsup_{m \to \infty}$, we use that, by lemma \ref{lem:asconv},
\begin{equation*}
    \lim_{m \to \infty} \sum_{l=0}^{5N-1}  \mu_{\theta^l \omega} ( \Gamma_{\rho_m}(\theta^l \omega) ) =5t \text{, $\mathbb{P}$-a.s.}
\end{equation*}and, by similar arguments\footnote{Adapting the argument of lemma \ref{lem:asconv} item (III) to the new term, we see that the new $\mathbb{P}$-expectation is $t L^{\alpha-1}$, but the variance lemma used therein, lemma \ref{lem:variance}, would need to be adapted as well, what we omitted.},
\begin{equation*}
     \lim_{m \to \infty }\sum_{k=1}^{N'} \sum_{l=kL}^{kL+L'-1} \mu_{\theta^l \omega}\left( \Gamma_{\rho_m}(\theta^l \omega)\right) = t L^{\alpha-1} \text{, $\mathbb{P}$-a.s..}
\end{equation*}

Finally, using hypothesis (H\ref{hyp:quenchedsep2}) and noticing that $\mathfrak{d}- \kappa d_0 +1 < 0$ (by (H\ref{hyp:parampolynomialseries})) and $\alpha-1 < 0$ (by design), we conclude that the RHS of equation (\ref{eq:totalerrorcontrol1}) under the double limit $\varlimsup_{L \to \infty} \varlimsup_{m \to \infty}$ goes to $0$. The same thing occurs if we adopt the double limits $\varliminf_{L \to \infty} \varlimsup_{m \to \infty}$, $\varlimsup_{L \to \infty} \varliminf_{m \to \infty}$ and $\varliminf_{L \to \infty} \varliminf_{m \to \infty}$. Therefore
\begin{equation*}
    \lim\limits_{L \to \infty} \varliminf_{m \to \infty} \hspace{-6.5mm}\overline{\phantom{\varliminf}} \hspace{2mm}
    \left| \mu_\omega\left(Z^{\omega,N}_{\Gamma_{\rho_m}} = n\right) - \mu_\omega\left( \sum_{j=0}^{N'-1} \tilde{Z}^{\omega}_j = n \right) \right| =0 \text{, $\mathbb{P}$-a.s.}. 
\end{equation*}

%%%%%%%%%%%%%%%%%%%%%%%%%%%%%%%%%%%%%%%%%%%
%%%%%%%%%%%%%%%%%%%%%%%%%%%%%%%%%%%%%%%%%%%
%%%%%%%%%%%%%%%%% CONVERGENCE TO COMPOUND POISSON
\subsection{Convergence of the leading term to the compound Poisson distribution}\label{sec:leading}
It remains to show that $\mu_\omega\left( \sum_{j=0}^{N'-1} \tilde{Z}^{\omega}_j = n \right)$ to $\operatorname{CPD}_{t \alpha_1,(\lambda_\ell)_\ell}(n)$.

Due to the independence and distributional properties of the $\tilde{Z}^{\omega}_j$'s (see theorem \ref{thm:approx}):
\begin{equation*}
    \mu_\omega\left( \sum_{j=0}^{N'-1} \tilde{Z}^{\omega}_j {=} n \right) 
    {=} \sum_{l=1}^{n} \sum_{0 \le j_1 < \ldots < j_l \le N'-1} \left( \prod_{\substack{j \in [0,N'-1] \\ \setminus \{j_i:i=1,\ldots,l\}}} \mu_{\omega}(Z^{\omega}_j {=} 0) \cdot \hspace{-2mm}\sum_{\substack{(n_1,\ldots,n_l) \in \mathbb{N}_{\ge 1}^l \\ n_1 + \ldots + n_l = n} } \prod_{i=1}^l \mu_\omega(Z^{\omega}_{j_i} {=} n_i ) \right)
\end{equation*}
\begin{equation*}
     \stackrel{(\star)}{=}(1+o(1)) \prod_{j=0}^{N'-1} \mu_\omega(Z^{\omega}_j = 0) \sum_{l=1}^n \frac{1}{l!} \sum_{\substack{j_i \in [0,N'-1] \\ i=1,\ldots,l}} \sum_{\substack{(n_1,\ldots,n_l) \in \mathbb{N}_{\ge 1}^l \\ n_1 + \ldots + n_l = n} } \prod_{i=1}^l \mu_\omega(Z^{\omega}_{j_i} = n_i )
\end{equation*}
\begin{equation*}
    \stackrel{(\star \star)}{=} (1+o(1)) \prod_{j=0}^{N'-1} \mu_\omega(Z^{\omega}_j = 0) \sum_{l=1}^n \frac{1}{l!} \sum_{\substack{(n_1,\ldots,n_l) \in \mathbb{N}_{\ge 1}^l \\ n_1 + \ldots + n_l = n} } \prod_{i=1}^l \left( \sum_{j=0}^{N'-1} \mu_\omega(Z^{\omega}_j = n_i ) \right),
\end{equation*}where i) $o(1)$ refers to a function $g(\omega,m,L)$ so that $\varlimsup_{L \to \infty} \varlimsup_{m \to \infty} |g(\omega,m,L)| = 0$, 
 $\mathbb{P}$-a.s.; ii) equality $(\star)$ included $1/l!$ to account for $j_i$'s not being anymore increasing and used that the error terms that come from different $j_i$'s being equal are small, as one can see in the case when two $j_i$ agree; and iii) equality $(\star \star)$ uses that a product of sums distributes as a sum of products.

We then notice that, by lemma \ref{lem:asconv},
\begin{equation*}
    \lim\limits_{L \to \infty} \varliminf_{m \to \infty} \hspace{-6.5mm}\overline{\phantom{\varliminf}}  \sum_{j=0}^{N'-1} \mathbb{\mu}^\omega (Z^{\omega}_j=n_i) = t\alpha_1\lambda_{n_i}\text{, $\mathbb{P}$-a.s.}
\end{equation*}
and
\begin{equation*}
    \lim\limits_{L \to \infty} \varliminf_{m \to \infty} \hspace{-8mm}\overline{\phantom{\varliminf}} \prod_{j=0}^{N'-1} \mu_\omega(Z^{\omega}_j = 0) = \lim\limits_{L \to \infty} \varliminf_{m \to \infty} \hspace{-8mm}\overline{\phantom{\varliminf}} \operatorname{exp} \left(  \sum_{j=0}^{N'-1} \ln \left(1-\mu_\omega(Z^{\omega}_j \ge 1)\right) \right)
\end{equation*}
\begin{equation*}
    = \lim\limits_{L \to \infty} \varliminf_{m \to \infty} \hspace{-8mm}\overline{\phantom{\varliminf}} \operatorname{exp} \left(  \sum_{j=0}^{N'-1} -\mu_\omega(Z^{\omega}_j \ge 1) + o(1) \right) = e^{-t \alpha_1}\text{, }\mathbb{P}\text{-a.s.}.
\end{equation*}

Therefore 
\begin{equation*}
    \lim\limits_{L \to \infty} \varliminf_{m \to \infty} \hspace{-8mm}\overline{\phantom{\varliminf}} \hspace{2mm} \left|\mu_\omega\left( \sum_{j=0}^{N'-1} \tilde{Z}^{\omega}_j = n \right) - e^{-t \alpha_1} \sum_{l=1}^n \frac{(t \alpha_1)^l}{l!} \sum_{\substack{(n_1,\ldots,n_l) \in \mathbb{N}_{\ge 1}^l \\ n_1 + \ldots + n_l = n} } \prod_{i=1}^l \lambda_{n_i} \right| = 0\text{, }\mathbb{P}\text{-a.s.}
\end{equation*}
\begin{equation*}
    \Leftrightarrow \lim\limits_{L \to \infty} \varliminf_{m \to \infty} \hspace{-8mm}\overline{\phantom{\varliminf}} \hspace{2mm} \left|\mu_\omega\left( \sum_{j=0}^{N'-1} \tilde{Z}^{\omega}_j = n \right) - \operatorname{CPD}_{t \alpha_1,(\lambda_\ell)_\ell}(n) \right| = 0\text{, }\mathbb{P}\text{-a.s.},
\end{equation*}where the equivalence is because the former term is precisely the density of such a compound Poisson distribution (see equation (\ref{eq:cpddensity})). 

As a consequence, we can conclude the proof with
\begin{equation*}
    \hspace{-60mm} \varlimsup_{m \to \infty} \left| \mu_\omega(Z^{\omega,N}_{\Gamma_{\rho_m}} = n) - \operatorname{CPD}_{t \alpha_1,(\lambda_\ell)_\ell}(\{n\}) \right|
\end{equation*}
\begin{equation*}
\hspace{45mm}\begin{tabular}{c l}
     $\le$ & $\displaystyle \lim_{L \to \infty} \varlimsup_{m \to \infty} \left| \mu_\omega(Z^{\omega,N}_{\Gamma_{\rho_m}} = n) - \mu_\omega\left( \sum_{j=0}^{N'-1} \tilde{Z}^{\omega}_j = n \right) \right|$  \\
    & \\
     $+$ & $\displaystyle \lim_{L \to \infty}\varlimsup_{m \to \infty} \left| \mu_\omega\left( \sum_{j=0}^{N'-1} \tilde{Z}^{\omega}_j = n \right) - \operatorname{CPD}_{t \alpha_1,(\lambda_\ell)_\ell}(n) \right| $\\
     $=$ & $0$, $\mathbb{P}$-a.s.
\end{tabular}
\end{equation*}

%%%%%%%%%%%%%%%%%%%%%%%%%%%%%%%%%%%%%%%%%%%%%%%%%%%%%%%%%%%%%%%%%%%%%%%%%%%%%
%%%%%%%%%%%%%%%%%%%%%%%%%%%%%%%%%%%%%%%%%%%%%%%%%%%%%%%%%%%%%%%%%%%%%%%%%%%%%
%%%%%%%%%%%%%%%%%%%%%%%%%%%%%%%%%%%%%%%%%%%%%%%%%%%%%%%%%%%%%%%%%%%%%%%%%%%%%
%%%%%%%%%%%%%%%%%%%%%%%%%%%%%%%%%%%%%%%%%%%%%%%%%%%%%%%%%%%%%%%%%%%%%%%%%%%%%

\section{Proof of theorem \ref{thm:main2}}\label{sec:proofmain2}

By \cite{kallenberg1983random} theorem 4.2, it suffices to show that for any $k {\ge} 1$, $0 {\le} a_1 {<} b_1 {\le} {\ldots} {\le} a_k {<} b_k {\le} 1$ and $n_1, {\ldots}, n_k {\ge} 0$:
\begin{equation*}
    \hspace{-40mm}\mu_\omega\left( Y_{\rho_m}^{\omega,\lfloor \frac{t}{\hat\mu(\Gamma_{\rho_m})} \rfloor}\left( [a_1,b_1) \right) =n_1, \ldots, Y_{\rho_m}^{\omega,\lfloor \frac{t}{\hat\mu(\Gamma_{\rho_m})} \rfloor}\left( [a_k,b_k) \right) =n_k \right) 
\end{equation*}
\begin{equation}\label{eq:cpppcheck}
    \hspace{60mm}\stackrel[m \to \infty]{\mathbb{P}\text{-a.s.}}{\longrightarrow} \mathbb{Q}\Big(N([a_1,b_1))=n_1, \ldots, N([a_k,b_k))=n_k  \Big),
\end{equation}where $N: (\mathcal{X}, \mathscr{X}, \mathbb{Q}) \to \mathfrak{M}$ with $N_* \mathbb{Q} = CPPP_{t \alpha_1, (\lambda_\ell)_\ell}$ complies with definition \ref{def:cppp}.

To simplify the presentation, we consider $k=2$ and that, when needed, fractions divided by $\hat\mu(\Gamma_{\rho_m})$ or $L$ already make an integer.

Write, for $q=1,2$,
\begin{equation*}
    \begin{tabular}{c c c c}
        $\displaystyle A_q = \frac{\displaystyle a_q t}{\displaystyle \hat\mu(\Gamma_{\rho_m})}$ &  $\displaystyle B_q = \frac{\displaystyle b_q t}{\displaystyle \hat\mu(\Gamma_{\rho_m})}$ &  $N_q = \frac{\displaystyle (b_q - a_q)t}{\displaystyle \hat\mu(\Gamma_{\rho_m})}$ & $N'_q = \frac{\displaystyle N_q}{\displaystyle L}$\\
         & & $N = \frac{\displaystyle t}{\displaystyle \hat\mu(\Gamma_{\rho_m})}$ & $N' = \frac{\displaystyle N}{\displaystyle L}$
    \end{tabular}.
\end{equation*}

So the left side of equation (\ref{eq:cpppcheck}) becomes 
\begin{equation*}
    \mu_\omega \left( \sum_{i=A_1}^{B_1 -1} I_{i}^{\omega,m} =n_1, \sum_{i=A_2}^{B_2 -1} I_{i}^{\omega,m} =n_2  \right)
\end{equation*}
\begin{equation*}
     = \mu_\omega \Bigg( \sum_{j=0}^{N'_1-1} \underbrace{\sum_{i=A_1+jL}^{A_1+(j+1)L -1} I_{i}^{\omega,m}}_{\displaystyle Z_{j}^{\omega,1}} =n_1, \sum_{j=0}^{N'_2-1} \underbrace{\sum_{i=A_2+jL}^{A_2+(j+1)L -1} I_{i}^{\omega,m}}_{\displaystyle Z_{j}^{\omega,2}} =n_2  \Bigg).
\end{equation*}

With $\mathcal{I} = \{(q,j): q=1,2, j=0,\ldots, N'_q-1\}$, the family of random variables $(Z_{j}^{\omega,q})_{(q,j) \in \mathcal{I}}$ is mimicked by an independency $(\tilde{Z}_{j}^{\omega,q})_{(q,j) \in \mathcal{I}}$, $(Z_{j}^{\omega,q})_{(q,j) \in \mathcal{I}} \perp (\tilde{Z}_{j}^{\omega,q})_{(q,j) \in \mathcal{I}}$, $Z_{j}^{\omega,q} \sim \tilde{Z}_{j}^{\omega,q}$, for $(q,j) \in \mathcal{I}$.

In analogy to the approximation theorem, we then want to bound
\begin{equation}\label{eq:cpppestimate}
    \left| \mu_\omega \Bigg( \sum_{j=0}^{N'_1-1}  Z_{j}^{\omega,1} =n_1, \sum_{j=0}^{N'_2-1} Z_{j}^{\omega,2} =n_2  \Bigg) - \mu_\omega \Bigg( \sum_{j=0}^{N'_1-1}  \tilde{Z}_{j}^{\omega,1} =n_1, \sum_{j=0}^{N'_2-1} \tilde{Z}_{j}^{\omega,2} =n_2  \Bigg) \right|.
\end{equation}

Denote $\tilde{W}_{a,b}^{\omega,q} = \sum_{j=a}^b \tilde{Z}_j^{\omega,q}$ and similarly without $\sim$'s.

Then
\begin{equation*}
    \begin{tabular}{c c l}
       $\displaystyle \text{eq.}(\ref{eq:cpppestimate})$  & $\le$ & $\left | \mu_\omega\left(W_{0,N'_1-1}^{\omega,1}=n_1, W_{0,N'_2-1}^{\omega,2}=n_2\right) - \mu_\omega\left(\tilde{W}_{0,N'_1-1}^{\omega,1}=n_1, W_{0,N'_2-1}^{\omega,2}=n_2\right) \right|$  \\
         & $+$ & $\left| \mu_\omega\left(\tilde{W}_{0,N'_1-1}^{\omega,1}=n_1, W_{0,N'_2-1}^{\omega,2}=n_2\right) - \mu_\omega\left(\tilde{W}_{0,N'_1-1}^{\omega,1}=n_1, \tilde{W}_{0,N'_2-1}^{\omega,2}=n_2\right) \right|$\\
         & $=:$ & $(\vartriangle) + (\triangledown)$.
    \end{tabular}
\end{equation*}

We consider $(\vartriangle)$ first. Repeating the telescoping argument in the proof of theorem \ref{thm:approx}, we have
\begin{equation*}
\begin{tabular}{c c l l}
    $\displaystyle (\vartriangle)$ & $\displaystyle \le$ & $\displaystyle \sum_{j=0}^{N'_1-1}$ & $\displaystyle \left|\substack{\displaystyle \mu_\omega(\tilde{W}_{0,j-1}^{\omega,1} + W_{j,N'_1-1}^{\omega,1}=n_1, W_{0,N'_2-1}^{\omega,2}=n_2 ) \\ \displaystyle -\mu_\omega(\tilde{W}_{0,j}^{\omega,1} + W_{j+1,N'_1-1}^{\omega,1}=n_1, W_{0,N'_2-1}^{\omega,2}=n_2 )} \right|$ \\
    & $\displaystyle \le$ & $\displaystyle \sum_{j=0}^{N'_1-1} \sum_{l=0}^{n_1}$&$\displaystyle \mu_\omega(\tilde{W}_{0,j-1}^{\omega,1}=l) \left|\substack{\displaystyle \mu_\omega(W_{j,N'_1-1}^{\omega,1}=n_1-l, W_{0,N'_2-1}^{\omega,2}=n_1) \\ \displaystyle - \mu_\omega(\tilde{Z}_{j}^{\omega,1} + W_{j+1,N'_1-1}^{\omega,1}=n_1-l, W_{0,N'_2-1}^{\omega,2}=n_1)}\right|$\\
     & $\le$ & $\displaystyle \sum_{j=0}^{N'_1-1} \sum_{q=0}^{n_1} \sum_{u=0}^q $ & $\displaystyle \left|\substack{\displaystyle \mu_\omega(Z_{j}^{\omega,1}=u,W_{j+1,N'_1-1}^{\omega,1}=q-u, W_{0,N'_2-1}^{\omega,1}=n_2) \\ \displaystyle - \mu_\omega(Z_{j}^{\omega,1}=u)\mu_\omega(W_{j+1,N'_1-1}^{\omega,1}=q-u, W_{0,N'_2-1}^{\omega,1}=n_2)}\right|$
\end{tabular}
\end{equation*}

One has to single out $u=0$ from $u\in[1,q]$. We focus on the principal part $u\in[1,q]$, which can be bounded by the sum of the following three terms (note the unusual order).

\begin{equation*}
    (\vartriangle)_2 \le \sum_{j=0}^{N'_1-1} \sum_{q=1}^{n_1} \sum_{u=1}^q \left|\substack{\displaystyle \mu_\omega(Z_{j}^{\omega,1}=u,W_{j+1,N'_1-1}^{\omega,1}=q-u, W_{0,N'_2-1}^{\omega,1}=n_2) \\ \displaystyle - \mu_\omega(Z_{j}^{\omega,1}=u,W_{j+\Delta,N'_1-1}^{\omega,1}=q-u, W_{0,N'_2-1}^{\omega,1}=n_2) }\right|,
\end{equation*}

\begin{equation*}
    (\vartriangle)_1 \le \sum_{j=0}^{N'_1-1} \max_{q \in [1,n_1]} \sum_{u=1}^q \left|\substack{\displaystyle \mu_\omega(Z_{j}^{\omega,1}=u,W_{j+\Delta,N'_1-1}^{\omega,1}=q-u, W_{0,N'_2-1}^{\omega,1}=n_2) \\ \displaystyle - \mu_\omega(Z_{j}^{\omega,1}=u)\mu_\omega(W_{j+\Delta,N'_1-1}^{\omega,1}=q-u, W_{0,N'_2-1}^{\omega,1}=n_2) }\right|,
\end{equation*}

\begin{equation*}
    (\vartriangle)_3 \lesssim \sum_{j=0}^{N'_1-1} \sum_{q=1}^{n_1} \sum_{u=1}^q \left|\substack{\displaystyle \mu_\omega(Z_{j}^{\omega,1}=u)\mu_\omega(W_{j+\Delta,N'_1-1}^{\omega,1}=q-u, W_{0,N'_2-1}^{\omega,1}=n_2) \\ \displaystyle - \mu_\omega(Z_{j}^{\omega,1}=u)\mu_\omega(W_{j+1,N'_1-1}^{\omega,1}=q-u, W_{0,N'_2-1}^{\omega,1}=n_2)}\right|.
\end{equation*}

The bound of $(\vartriangle)_1$ can be handled pretty much as in the proof of theorem \ref{thm:main}. Minor modifications are needed and we discuss them now. Notice that the first term inside absolute value in $(\vartriangle)_1$ can written as
\begin{equation*}
    \mu_\omega\left(\mathbbm{1}_{ \{ \sum_{i=A_1+jL}^{A_1+(j+1)L-1} I_i^{\omega,m} =u \} } 
    \mathbbm{1}_{ \{ \sum_{i=A_1+(j+\Delta)L}^{B_1-1} I_i^{\omega,m} =q-u \} }
    \mathbbm{1}_{ \{ \sum_{i=A_2}^{B_2-1} I_i^{\omega,m} =n_2 \} }\right)
\end{equation*}and, with $\omega'= \theta^{A_1+jL}\omega$,
\begin{equation*}
    =\mu_{\omega'}\left(\mathbbm{1}_{ \{ \sum_{i=0}^{L-1} I_i^{\omega',m} =u \} } 
    \mathbbm{1}_{ \{ \sum_{i=\Delta L}^{N_1-1-jL} I_i^{\omega',m} =q-u \} }
    \mathbbm{1}_{ \{ \sum_{i=A_2-A_1-jL)}^{B_2-1-A_1-jL)} I_i^{\omega',m} =n_2 \} }\right)
\end{equation*}
\begin{equation*}
    = \mu_{\omega'}\left(\mathbbm{1}_{ \{ \sum_{i=0}^{L-1} I_i^{\omega',m} =u \} } \left[ 
    \mathbbm{1}_{ \{ \sum_{i=0}^{N_1-1-jL-\Delta L} I_i^{\theta^{\Delta L} \omega',m} =q-u \} \cap \{ \sum_{i=A_2-A_1-jL-\Delta L)}^{B_2-1-A_1-jL-\Delta L)} I_i^{\theta^{\Delta L}\omega',m} =n_2 \} } \right] \circ T_{\omega'}^{\Delta L}\right),
\end{equation*}where the last step is because
\begin{equation*}
    A_2-A_1 -jL \ge A_2-A_1 - (N'_1-1)L \ge A_2 - A_1 - (B_1 - A_1) = A_2 - B_1 \ge \Delta L,   
\end{equation*}with the latter inequality following from being $\Delta L \in [1, (A_2 - B_1)/L]$ after choosing $\Delta := {\rho_m}^{-v}$, for some $v \in (0,d_0)$ and considering $m$ large enough (dependent on $L$) so that the first inequality below holds.
\begin{equation*}
    {\rho_m}^{-v} \le L^{-1}\frac{(a_2-b_1)t}{C {\rho_m}^{d_0}} \le L^{-1}\frac{(a_2-b_1)t}{\hat\mu(\Gamma_{\rho_m})} = \frac{A_2-B_1}{L}. 
\end{equation*}

With the positive separation $\Delta L$, we can follow the treatment of $\mathcal{R}^1_{\omega,m}(N,L,\Delta)$ in section \ref{sec:R1estimate}: a) the function not composed with the dynamics should be given a Lipschitz approximation (and it is the same function that appeared before), b) the function composed with the dynamics is more complicated, but we only care about its sup norm, which is $1$ anyway, c) quenched decay of correlations can be applied again, proceeding just as before.  

To control the singled out term $u=0$ one repeats the strategy in the proof of theorem \ref{thm:approx} with what we did above to control the principal part. Using the notation from the proof of theorem \ref{thm:approx}, errors with $\sim$'s will appear, only the first of which still matters at the end (the others are dominated by the respective errors without $\sim$'s). We omit this part.

The bound of $(\vartriangle)_2$, just like in the proof of theorem \ref{thm:main}, is estimated from above by
\begin{equation*}
    \sum_{j=0}^{N'_1-1} \max_{q \in [1,n_1]} \sum_{u=1}^q \mu_\omega(Z_{j}^{\omega,1}=u,W_{j+1,j+\Delta-1}^{\omega,1}\ge 1, W_{0,N'_2-1}^{\omega,1}=n_2),
\end{equation*}
\begin{equation*}
    \le \sum_{j=0}^{N'_1-1} \mu_\omega(Z_{j}^{\omega,1} \ge 1,W_{j+1,j+\Delta-1}^{\omega,1}\ge 1),
\end{equation*}which is pretty much identical to $\mathcal{R}^2_{\omega,m}(N,L,\Delta)$ and can be controlled just like we did in section \ref{sec:R2estimate}.

We also omit the discussion of $(\vartriangle)_3$, which should be treated analogously.

So the error terms associated with $(\vartriangle)$ end up being treated just like the errors already controlled in the proof of theorem \ref{thm:main}.

Now we consider $(\triangledown)$. Repeating the telescopic argument once more, we have
\begin{equation*}
\begin{tabular}{c c l @{} l}
    $\displaystyle (\triangledown)$ & $\displaystyle \le$ & $\displaystyle \sum_{j=0}^{N'_2-1}$ & $\displaystyle \left|\substack{\displaystyle \mu_\omega(\tilde{W}_{0,N'_1-1}^{\omega,1}=n_1, \tilde{W}_{0,j-1}^{\omega,2} + W_{j,N'_2-1}^{\omega,2} =n_2 ) \\ \displaystyle - \mu_\omega(\tilde{W}_{0,N'_1-1}^{\omega,1}=n_1, \tilde{W}_{0,j}^{\omega,2} + W_{j+1,N'_2-1}^{\omega,2} =n_2 )} \right|$ \\
    & $\displaystyle \le$ & $\displaystyle \sum_{j=0}^{N'_2-1} \sum_{l=0}^{n_2}$&$\displaystyle \mu_\omega(\tilde{W}_{0,j-1}^{\omega,2}=l) \left|\substack{\displaystyle \mu_\omega(\tilde{W}_{0,N'_1-1}^{\omega,1}=n_1, W_{j,N'_2-1}^{\omega,2}=n_2-l ) \\ \displaystyle -\mu_\omega(\tilde{W}_{0,N'_1-1}^{\omega,1}=n_1, \tilde{Z}_j^{\omega,2} + W_{j+1,N'_2-1}^{\omega,2} =n_2)}\right|$\\
     & $\le$ & $\displaystyle \sum_{j=0}^{N'_2-1} \sum_{q=0}^{n_2} $ & $\displaystyle \left|\substack{\displaystyle \mu_\omega(\tilde{W}_{0,N'_1-1}^{\omega,1}=n_1, Z_j^{\omega,2} + W_{j+1,N'_2-1}^{\omega,2} =q) \\ \displaystyle - \mu_\omega(\tilde{W}_{0,N'_1-1}^{\omega,1}=n_1, \tilde{Z}_j^{\omega,2} + W_{j+1,N'_2-1}^{\omega,2} =q)}\right|$\\
     & $\le$ & $\displaystyle \sum_{j=0}^{N'_2-1} \sum_{q=0}^{n_2} \sum_{u=0}^q $ & $\displaystyle \left|\mu_\omega( Z_j^{\omega,2} {=}u, W_{j+1,N'_2-1}^{\omega,2} {=}q{-}u) - \mu_\omega(Z_j^{\omega,2} {=}u)\mu_\omega( W_{j+1,N'_2-1}^{\omega,2} {=}q{-}u)\right|.$ 
     %\left|\substack{\displaystyle \mu_\omega( Z_j^{\omega,2} + W_{j+1,N'_2-1}^{\omega,2} =q) \\ \displaystyle - \mu_\omega(\tilde{Z}_j^{\omega,2} + W_{j+1,N'_2-1}^{\omega,2} =q)}\right|
\end{tabular}
\end{equation*}

The latter expression is essentially the same of that encountered at the end of the telescopic argument in the proof of theorem \ref{thm:approx}. Therefore it can be bounded in the same manner, with errors $\tilde{\mathcal{R}}^1, \mathcal{R}^1, \mathcal{R}^2$ and $\mathcal{R}^3$, which can then be controlled just as in the proof of theorem \ref{thm:main}.

Finally, using independency and section \ref{sec:leading}, the leading term appearing on the second part of equation \ref{eq:cpppestimate} converges, as desired, to
\begin{equation*}
    CPD_{(b_1-a_1)t\alpha_1, (\lambda_\ell)_\ell}(n_1) \cdot CPD_{(b_2-a_2)t\alpha_1, (\lambda_\ell)_\ell}(n_2).
\end{equation*}

%%%%%%%%%%%%%%%%%%%%%%%%%%%%%%%%%%%%%%%%
%%%%%%%%%%%%%%%%%%%%%%%%%%%%%%%%%%%%%%%%
%%%%%%%%%%%%%%%%%%%%%%%%%%%%%%%%%%%%%%%%
%%%%%%%%%%%     SECTION: EXAMPLES
%%%%%%%%%%%%%%%%%%%%%%%%%%%%%%%%%%%%%%%%
\section{Application: random piecewise expanding one-dimensional systems}\label{sec:ex}

We consider a class of random piecewise expanding one-dimensional systems \linebreak$(\theta, \mathbb{P}, T_\omega, \mu_\omega, \Gamma)$  prescribed by the following conditions. Elements in this class immediately comprise a system as in the general setup of section \ref{sec:gensetup} and will check that they also comprise a system as in the working setup of section \ref{sec:wrksetup} (i.e., satisfying hypotheses (H\ref{hyp:amb}-H\ref{hyp:param})).

\begin{cond}\label{cond:finitemaps}
Consider finitely many maps of the unit interval (or circle), $T_v: M \rightarrow M$, for $v \in \{0,\ldots,u-1\}$. For ease of exposition, say that $u=2$. They carry a family of open intervals $A_v = (\zeta_{v,i})_{i=1}^{I_v}$ ($I_v < \infty$) so that $M \setminus \bigcup_{i=1}^{I_v} \zeta_{v,i}$ is finite and $T_v|_{\zeta_{v,i}}$ is surjective and $C^2$-differentiable with
\begin{equation*}
    \begin{tabular}{c c c c c c c c c}
       $1$ & $<$ & $d_{\min}$ & $\le$ & $\inf\{|{T_v}' (x)| : x \in \zeta_{v,i}, v=1,\ldots,I_v, i=0,\ldots,u-1\},$ & & \\
         & & & & $\sup\{|{T_v}'' (x)| : x \in \zeta_{v,i}, v=1,\ldots,I_v, i=0,\ldots,u-1\}$ & $\le$ & $c_{\max}$ & $<$ & $\infty$.
    \end{tabular}
\end{equation*}
\end{cond}

For $n \ge 1$, let $A_n^\omega = \bigvee_{j=0}^{n-1}(T_\omega^j)^{-1} A_{\pi_j(\omega)}$. For $n=0$, we adopt the convention $A^\omega_0 = \{(0,1)\}$ ($\forall \omega \in \Omega$). Write $\mathcal{A}^\omega_n = \bigcup_{\zeta \in A^\omega_n} \zeta$ (co-finite) and, for $x \in \mathcal{A}^\omega_n$, denote by $A_n^\omega(x)$ the element of $A_n^\omega$ containing $x$. In particular, $x \in \mathcal{A}^\omega_{n}$ implies that $x$ is a point of differentiability for $T_\omega^n$.

\begin{cond}\label{cond:drawing}
Let $\Omega = \{0,1\}^{\mathbb{Z}}$. Set $T_\omega := T_{\pi_0(\omega)}$, where $\pi_j(\omega) = \omega_j$ $(j \in \mathbb{Z})$. Consider $\theta: \Omega \rightarrow \Omega$ to be the bilateral shift map.   
\end{cond}

\begin{cond}\label{cond:drivingmeas} Consider $\mathbb{P} \in \mathcal{P}_\theta(\Omega)$ an equilibrium state associated to a Lipschitz potential. Usual instances are Bernoulli and Markov measures.
\end{cond}

%{\color{lightgray}
%\hspace{-4mm}\textbf{C\ref{cond:drawing}'.} \textit{Let $\Omega = {\mathbb{N}_{\ge 0}}^\mathbb{Z}$ and consider that $\mathbb{N}_{\ge 0} = \mathbb{N}^0 \sqcup \mathbb{N}^1$, with $\mathbb{N}^0$ and $\mathbb{N}^1$ being the set of even and odd numbers, respectively.  Set $T_\omega := T_{\tilde\pi_0(\omega)}$, where $\tilde\pi_j(\omega) = \begin{cases}
%    0\text{, if } \omega_j \in \mathbb{N}^0 \\ 1 \text{, if } \omega_j \in \mathbb{N}^1
%\end{cases}$ ($j \in \mathbb{Z}$). Consider $\theta: \Omega \rightarrow \Omega$ to be the shift map.}

%\vspace{3mm}

%\hspace{-4mm}\textbf{C\ref{cond:drivingmeas}'.} \textit{Consider $\mathbb{P} \in \mathcal{P}(\Omega)$ given by $\mathbb{P} = (w(0) \eta^0 + w(1) + \eta^1)^\mathbb{Z}$, where $w(0)+w(1) =1$, $\eta^0 \in \mathcal{P}(\mathbb{N}^0)$, $\eta^1 \in \mathcal{P}(\mathbb{N}^1)$ and $\eta^0(2n) = \eta(n) = \eta^1(2n+1)$ ($\forall n \ge 0$), for some $\eta \in \mathcal{P}(\mathbb{N}_{\ge 0})$.}
%}

%\vspace{1mm}

%As said before, now we are not interested in showing (H\ref{hyp:inv}), but to avoid confusion we notice that taking $B(y_k^{\omega,n},R)\equiv B(0.5,0,5)=(0,1)$ makes $A^\omega_{n} = C^\omega_{n}$ ($\forall n \ge 1$).

\begin{cond}\label{cond:goodtarget}
Consider $\Gamma(\omega) = \{x(\omega)\}$ ($\omega \in \Omega$), where $x: \Omega \rightarrow M$ is a random variable taking values either $x_0$ or $x_1$ (possibly coincident) in the form $x(\omega) = x_{\pi_0(\omega)}$, with $\{x_0,x_1\} \subset \bigcap_{\omega \in \Omega} \bigcap_{l = 1}^\infty  \mathcal{A}^{\omega}_l\text{ }$\footnote{The intersection $\bigcap_{l = 1}^\infty \mathcal{A}^{\omega}_l$ is a co-countable set ($\forall \omega \in \Omega$).} (which needs to be a non-empty set). 

Moreover, for each $\omega \in \Omega$, with the minimal period 
\begin{equation*}
    m(\omega):= \min\{ m \ge 1 : T_\omega^{m(\omega)} x(\omega) = x(\theta^{m(\omega)} \omega) \} \in \mathbb{N}_{\ge 1} \cup \{\infty\},
\end{equation*}one defines the number of finite-periods occurring along the $\omega$ fiber ($K(\omega) \in 
\mathbb{N}_{\ge 0} \cup \{\infty\}$) and the associated sequence of such periods ($(m_j(\omega))_{j=0}^{K(\omega)-1} \subset \mathbb{N}_{\ge 1}$), using the conventions $m_{-1}(\omega) :\equiv 0$ and $\max \emptyset := 0$, letting

\begin{equation*}
    K(\omega) := \max\left\{k \ge 1 :     \begin{tabular}{l @{} l}
         $m_0(\omega) := m(\omega)$ & $\in \mathbb{N}_{\ge 1}$ \\
         $m_1(\omega) := m(\theta^{m_0(\omega)}\omega)$ & $\in \mathbb{N}_{\ge 1}$\\
         $m_2(\omega) := m(\theta^{m_1(\omega)+ m_0(\omega)}\omega)$ & $\in \mathbb{N}_{\ge 1}$\\
         \hspace{11mm}$\ldots$ \\
         $m_{k-1}(\omega) := m(\theta^{m_{k-2}(\omega)+ \ldots + m_0(\omega)}\omega)$ & $\in \mathbb{N}_{\ge 1}$
    \end{tabular} \right\} \in \mathbb{N}_{\ge 0} \cup \{\infty\}.
\end{equation*}

In particular, writing $M_j(\omega) := \sum_{k=0}^{j-1} m_k(\omega)$ for $1 \le j \le K(\omega)$ (with $M_0(\omega) :\equiv 0$), one has:
\begin{equation*}
    x(\omega) \xmapsto[]{\displaystyle T_{\omega}^{m_0(\omega)}} x(\theta^{M_1(\omega)}\omega) \xmapsto[]{\displaystyle T_{\theta^{M_1(\omega)}\omega}^{m_1(\omega)}} x(\theta^{M_2(\omega) }\omega) \xmapsto[]{\displaystyle T_{\theta^{M_2(\omega)}\omega}^{m_2(\omega)}} x(\theta^{M_3(\omega)}\omega) \text{ }\ldots\text{ }.
\end{equation*}

We conclude (C\ref{cond:goodtarget}) assuming that the target satisfies the dynamical condition that
\begin{equation*}
    \sup\{ m_j(\omega) : \omega \in \Omega, j = 0, \ldots, K(\omega)-1\} =: M_\Gamma < \infty,
\end{equation*}where the convention $\sup \emptyset := 0$ is adopted.
\end{cond}

\begin{cond}\label{cond:leb}
     %Consider that $\mu_\omega = \operatorname{Leb}$ ($\omega \in \Omega$), such as when all maps $T_v$ preserve Lebesgue.
     Consider that there exists $r >0$, $K,Q>1$ and $\beta \in (0,1]$ so that $\mu_\omega = h_\omega \operatorname{Leb}$ forms a quasi-invariant family satisfying: i) $(\omega,x) \mapsto h_\omega(x)$ is measurable, ii) $K^{-1} \le h_\omega|_{B_r(x(\omega))} \le K$ a.s., and iii)  $h_\omega|_{B_r(x(\omega))} \in \operatorname{Hol}_\beta(M)$ with $H_\beta(h_\omega|_{B_r(x(\omega))})$ a.s.. See remark \ref{rmk:cones}.
\end{cond}

%{\color{lightgray}
%\hspace{-4mm}\textbf{C\ref{cond:goodtarget}'.} \textit{Consider $\Gamma(\omega) = \{x(\omega)\}$ ($\omega \in \Omega$), where $x: \Omega \rightarrow M$ is a random point of projective type, i.e., taking countably many values $(x_n)_{n \ge 0}$ (possibly coincident) in the form $x(\omega) = x_{\pi_0(\omega)}$, with $\{x_n : n \ge 0\} \subset \bigcap_{\omega \in \Omega} \bigcap_{l = 1}^\infty  \mathcal{A}^{\omega}_l$\footnotetext{\color{lightgray}The intersection $\bigcap_{l = 1}^\infty  \mathcal{A}^{\omega}_l$ is a co-finite set ($\forall \omega \in \Omega$).}. Moreover, we consider that the target satisfies the dynamical property $M_\Gamma < \infty$.   }
%}

The following result says that theorem \ref{thm:main} applies to systems in the class (C\ref{cond:finitemaps}-C\ref{cond:leb}) and, in particular, they have quenched limit entry distributions in the compound Poisson class with the needed statistical quantities presented explicitly. 

\begin{thrm}\label{thm:expanding}
Let $(\theta, \mathbb{P}, T_\omega, \mu_\omega, \Gamma)$ be a system satisfying conditions (C\ref{cond:finitemaps}-C\ref{cond:leb}). Then the hypotheses of theorem \ref{thm:main} are satisfied with 
\begin{equation*}
\resizebox{1\hsize}{!}{
\begin{tabular}{@{}c @{}}
$\displaystyle \alpha_\ell = \ \hspace{2mm}\begin{cases}
    \frac{\displaystyle h_\omega(x(\omega))}{\displaystyle \int_\Omega h_\omega(x(\omega)) d \mathbb{P}(\omega) } \left[ \left(JT_\omega^{M_{\ell-1}(\omega)}(x(\omega))\right)^{-1}  - \left(JT_\omega^{M_{\ell}(\omega)}(x(\omega))\right)^{-1} \right] \text{, if }\ell \le K(\omega) \\
        \frac{\displaystyle h_\omega(x(\omega)) }{\displaystyle \int_\Omega h_\omega(x(\omega))  d \mathbb{P}(\omega) } \left[ \left(JT_\omega^{M_{\ell-1}(\omega)}(x(\omega))\right)^{-1} \right] \hspace{40mm}  \text{, if }\ell=K(\omega)+1 \\
        0\hspace{115mm}\text{, if }\ell \ge K(\omega)+2
    \end{cases} d\mathbb{P}(\omega).$
\end{tabular}}
\end{equation*}The quantities $\alpha_\ell$ comply with  (H\ref{hyp:return}) and theorem \ref{thm:lambdaalpha}, allowing for $\lambda_\ell = (\alpha_\ell - \alpha_{\ell+1})/\alpha_1$ to hold.

In particular: $\forall t {>} 0, \forall (\rho_m)_{m \ge 1} {\searrow} 0 \text{ with }\sum\nolimits_{m \ge 1} {\rho_m}^q {<} \infty \text{ } \text{(for some } 0 {<} q {<} 1$) one has
\begin{equation*}\displaystyle
 \qquad \mu_\omega(Z^{\omega,\lfloor t / \hat\mu(\Gamma_{\rho_m}) \rfloor}_{\Gamma_{\rho_m}}= n) \stackrel[m \rightarrow \infty]{\mathbb{P}\text{-a.s.}}{\longrightarrow} \operatorname{CPD}_{t \alpha_1,(\lambda_\ell)_\ell}(n) \text{ }(\forall n \ge 0),
\end{equation*}and
\begin{equation}\displaystyle
 \qquad {Y^{\omega,\lfloor t / \hat\mu(\Gamma_{\rho_m}) \rfloor}_{\Gamma_{\rho_m}}}_* \mu_\omega \stackrel[m \rightarrow \infty]{\mathbb{P}\text{-a.s.}}{\longrightarrow} \operatorname{CPPP}_{t \alpha_1,(\lambda_\ell)_\ell} \text{in }\mathcal{P}(\mathfrak{M}). 
\end{equation}
\end{thrm}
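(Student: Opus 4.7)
The plan is to verify each hypothesis of Theorem \ref{thm:main} for systems in the class (C\ref{cond:finitemaps})--(C\ref{cond:leb}), identify the fiber-wise return quantities $\alpha^\omega_\ell$ explicitly, and then integrate against $\mathbb{P}$ to obtain the stated formula for $\alpha_\ell$; the conclusions on quenched compound-Poisson distributions and point processes follow immediately from Theorems \ref{thm:main} and \ref{thm:main2}, with $\lambda_\ell = (\alpha_\ell - \alpha_{\ell+1})/\alpha_1$ provided by Theorem \ref{thm:lambdaalpha}.

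The structural hypotheses (H\ref{hyp:amb})--(H\ref{hyp:hyper}) follow from (C\ref{cond:finitemaps}): I would take plain cylinders to be the smoothness domains $A^\omega_n$, and good cylinders to be those whose closures avoid every discontinuity of $T^n_\omega$ for the appropriate order $n$; surjectivity of each branch combined with (C\ref{cond:leb}) gives the big-images condition; $d_{\min}>1$ yields exponential backward contraction (so $\kappa$ in (H\ref{hyp:backcontr}) may be taken arbitrarily large); and the classical one-dimensional bounded-distortion estimate coming from $|T''_v|\le c_{\max}$ and $|T'_v|\ge d_{\min}$ gives (H\ref{hyp:dist}) with $\mathfrak{d}=0$. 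For (H\ref{hyp:lip}), equip $\Omega=\{0,1\}^{\mathbb{Z}}$ with any standard shift-compatible metric; then $\theta$ is Lipschitz and both $\omega\mapsto T_\omega$ and $\omega\mapsto\Gamma(\omega)=\{x_{\omega_0}\}$ depend only on $\omega_0$, hence are trivially Lipschitz. Hypothesis (H\ref{hyp:targetposit}) uses (C\ref{cond:goodtarget}): since $x(\omega)\in\bigcap_l \mathcal{A}^\omega_l$ every orbit point $T^j_\omega x(\omega)$ lies in the interior of a good cylinder of every order, so a compactness argument on $\Omega$ yields a uniform $\rho_{\operatorname{sep}}(L)$ in (H\ref{hyp:quenchedsep}), while (H\ref{hyp:quenchedsep2}) is trivial because $\Gamma(\theta^i\omega)$ is a single point in the interior of a good cylinder.

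Hypothesis (H\ref{hyp:meas}) holds with $d_0=d_1=\eta=\beta=1$: by (C\ref{cond:leb}), $h_\omega$ is bounded above and below near $x(\omega)$, giving $\mu_\omega(B_\rho(x(\omega)))\asymp\rho$ and annular measure $\asymp r$. For (H\ref{hyp:dec}) I would invoke the standard quenched and annealed exponential decay of correlations for random one-dimensional piecewise expanding systems with equilibrium-state driving, obtained through a spectral-gap argument for the random transfer operator on a suitable BV- or H\"older-type function space compatible with the density regularity (C\ref{cond:leb}). With exponential decay in both (H\ref{hyp:dec}) and (H\ref{hyp:backcontr}), the remark following Theorem \ref{thm:main} reduces (H\ref{hyp:param}) to $\kappa d_0 > 1$, which is trivially satisfied.

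The core of the proof is the identification of $\alpha^\omega_\ell$. Fix $\omega$ and take $\rho$ small. Because $x(\omega)\in\bigcap_l \mathcal{A}^\omega_l$, the iterate $T^n_\omega$ is a local diffeomorphism at $x(\omega)$ with derivative $JT^n_\omega(x(\omega))$ for every $n$; hence for $y\in B_\rho(x(\omega))$, the image $T^n_\omega y$ is within $\rho\cdot JT^n_\omega(x(\omega))(1+o(1))$ of $T^n_\omega x(\omega)$, so $T^n_\omega y\in B_\rho(x(\theta^n\omega))$ forces $T^n_\omega x(\omega)=x(\theta^n\omega)$, i.e.\ $n=M_j(\omega)$ for some $j\le K(\omega)$. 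The uniformity in $\omega$ and $n\le L$ needed to rule out spurious returns from other branches of $T^n_\omega$ comes from $\sup_j m_j\le M_\Gamma<\infty$ combined with compactness of $\Omega$. At such a periodic time, the preimage branch of $B_\rho(x(\theta^{M_j}\omega))$ near $x(\omega)$ is, up to bounded distortion, the ball of radius $\rho/JT^{M_j}_\omega(x(\omega))$. These preimage balls nest in $j$, so the event ``$I_0^{\omega,\rho}=1$ with exactly $\ell$ hits in $[0,L]$'' equals, up to negligible error, the shell $B_{\rho/JT^{M_{\ell-1}}_\omega(x(\omega))}\setminus B_{\rho/JT^{M_\ell}_\omega(x(\omega))}$ (for $\ell\le K(\omega)$), the full terminal ball $B_{\rho/JT^{M_{K(\omega)}}_\omega(x(\omega))}$ (for $\ell=K(\omega)+1$), or empty (for $\ell\ge K(\omega)+2$). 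Dividing the leading-order measure ($\approx 2\rho\,h_\omega(x(\omega))$ times the bracket in the statement) by $\mu_\omega(B_\rho(x(\omega)))\sim 2\rho\,h_\omega(x(\omega))$ and integrating with the weight $h_\omega(x(\omega))/\int h_\omega(x(\omega))d\mathbb{P}$ inherited from the denominator in (\ref{eq:deftildealphainner}) yields the stated $\alpha_\ell$. Hypothesis (H\ref{hyp:return}') follows because $\hat\alpha^\omega_\ell\le d_{\min}^{-(\ell-1)}$ by uniform expansion, giving $\sum_\ell \ell\,\hat\alpha_\ell<\infty$, and $\alpha_1>0$ as soon as $\mathbb{P}$ assigns positive mass to the set where either the orbit is aperiodic or $JT^{M_1}_\omega(x(\omega))>1$.

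The main obstacle I anticipate is making the ``no spurious returns'' step fully uniform in $\omega$ and in $n\le L$. This requires pairing the uniform-period bound $M_\Gamma<\infty$ of (C\ref{cond:goodtarget}) with continuity, in $\omega$, of the discontinuity set of $T^n_\omega$, using compactness of $\Omega$. A secondary, mostly referential obstacle is the careful verification of exponential decay of correlations (H\ref{hyp:dec}) for the random transfer operators in this concrete setup (C\ref{cond:finitemaps})--(C\ref{cond:leb}), which, while by now standard in the literature, requires matching the function-space regularity with that of the densities in (C\ref{cond:leb}). Once these two points are in hand, the stated compound-Poisson conclusions are direct applications of Theorems \ref{thm:main} and \ref{thm:main2}.
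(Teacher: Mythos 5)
Your proposal is correct and follows essentially the same route as the paper: verify the working hypotheses with the natural choices ($d_0=d_1=\eta=\beta=1$, $\mathfrak{d}=0$, empty bad cylinders, decay of correlations deferred to standard transfer-operator results), then identify $\alpha^\omega_\ell$ by showing that for $\rho$ small (uniformly over the finitely many length-$L$ words) hits can only occur at the times $M_j(\omega)$ and the corresponding preimage sets nest, so that the $\ell$-hit event is the shell between consecutive preimage balls of radii $\rho/JT_\omega^{M_{\ell-1}}(x(\omega))$ and $\rho/JT_\omega^{M_\ell}(x(\omega))$, whose $\mu_\omega$-measure is extracted via the H\"older continuity of $h_\omega$ and of the inverse Jacobian before integrating against the weight $h_\omega(x(\omega))/\int h_\omega(x(\omega))\,d\mathbb{P}$. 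The verification of (H\ref{hyp:return}') via $\hat\alpha_\ell\le d_{\min}^{-(\ell-1)}$ and $\alpha_1\ge 1-1/d_{\min}>0$ also matches the paper's argument.
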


We will prove the theorem after a few remarks on relevant subclasses within (C\ref{cond:finitemaps}-C\ref{cond:leb}) and examples.

\begin{remark}\label{rmk:linear}
    When the maps $T_v$ are piecewise expanding linear maps, they preserve Lebesgue and conditions (C\ref{cond:finitemaps})-(C\ref{cond:drivingmeas}),(C\ref{cond:leb}) are immediately satisfied.
\end{remark}

    To illustrate condition (C\ref{cond:goodtarget}), or, better said, condition $M_\Gamma < \infty$, we can look at deterministic targets $x(\omega) \equiv x$. Two noticeable cases occur:

\begin{enumerate}[label=\roman*)]
    \item \textit{Pure periodic points} $x$: when there is some $m_* = m_*(x) \ge 1$ so that $x$ is (minimally) fixed by any concatenations of $m_*$ maps in $(T_v)_{v=0}^{u-1}$. In this case, $m(\omega) \equiv m_*, K(\omega) \equiv \infty, m_j(\omega) \equiv m_*$ and $M_\Gamma = m_*$.
    
    \vspace{3mm}
    It is convenient to represent these types of examples with diagrams (that can neglect topological information), where the deterministic target $x$ is highlighted with a green ball, each arrow indicates how each map $T_v$ acts, blue cycles indicate cycles that avoid the target, purple paths indicate paths between the blue cycles and the target and yellow cycles indicate cycles that include the target (but are not obtained composing blue cycles with purple paths).

    \begin{figure}[H]
    \centering
    \parbox{6.9cm}{\centering
    \includegraphics[width=3.5cm]{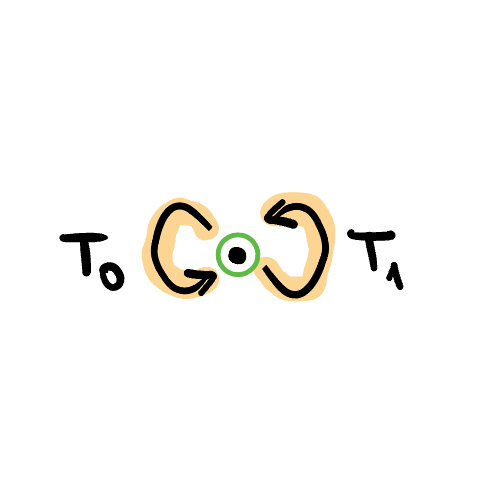}
    \caption{(a) Pure \\one-periodic diagram.}
    \label{fig:diagram11}}
    \qquad
    \begin{minipage}{6.9cm} \centering
    \includegraphics[width=3.5cm]{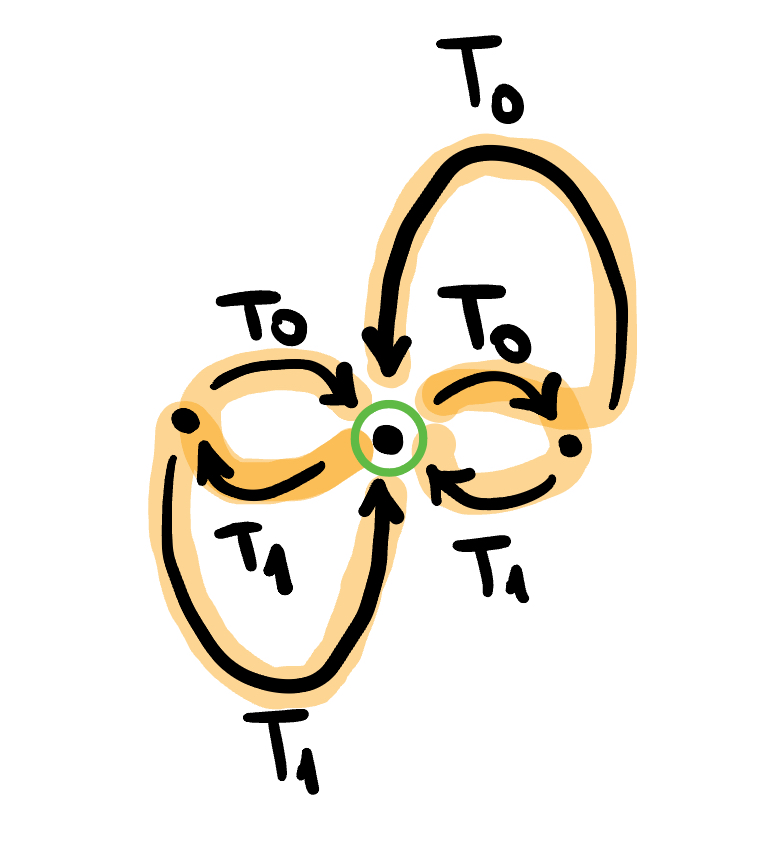}
    \caption{(b) Pure \\two-periodic diagram.}
    \label{fig:diagram12}
    \end{minipage}
    \end{figure}

    Considering remark \ref{rmk:linear}, we can easily present \underline{explicit examples} of systems complying with cases (a) and (b) above. In both examples, $x(\omega) \equiv 1/2$ and all maps preserve Lebesgue. Constructions of this kind are possible for any $m_* \ge 1$ and $u \ge 1$.

    \begin{figure}[H]
    \centering
    \parbox{6.9cm}{\centering
    \includegraphics[width=5cm]{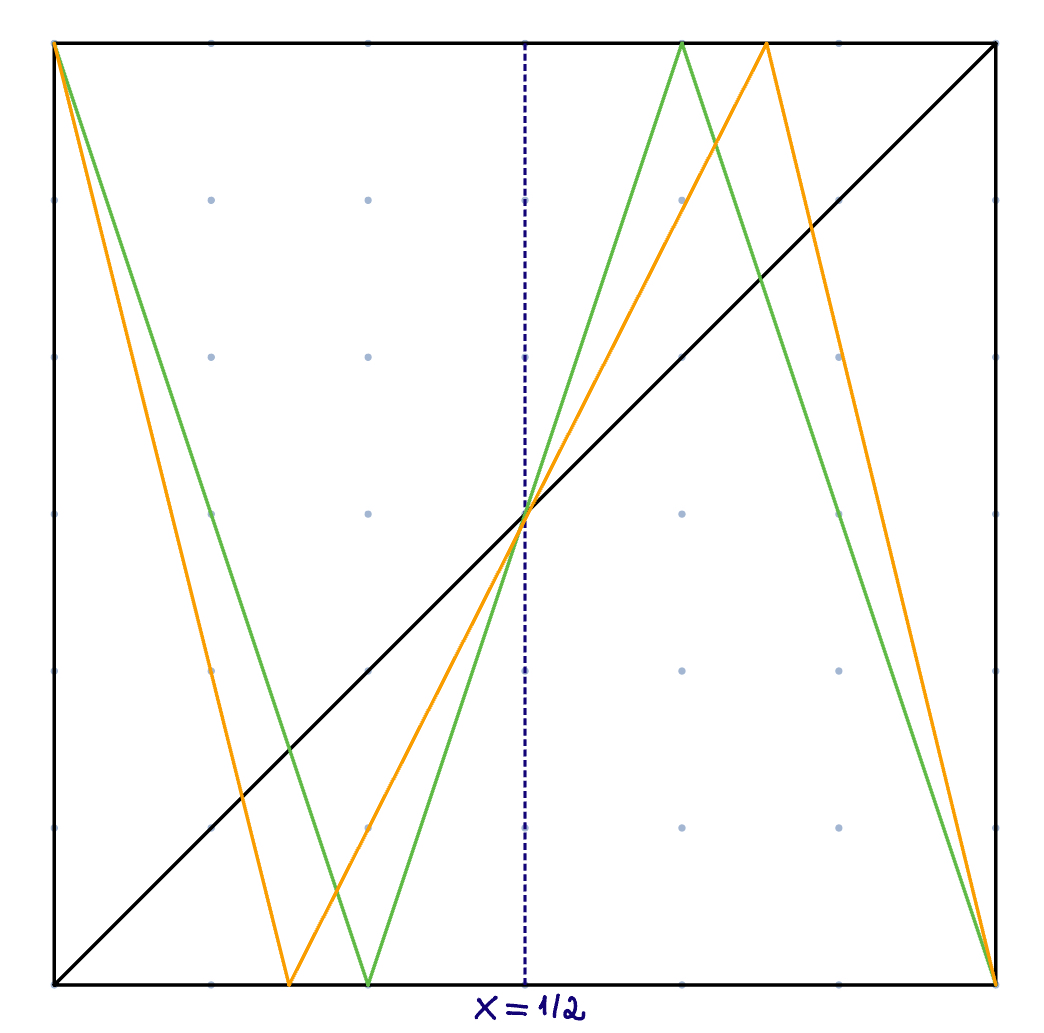}
    \caption{(a) A pure \\one-periodic system.\protect\footnotemark}
    \label{fig:graf1}}
    \qquad
    \begin{minipage}{6.9cm} \centering
    \includegraphics[width=5cm]{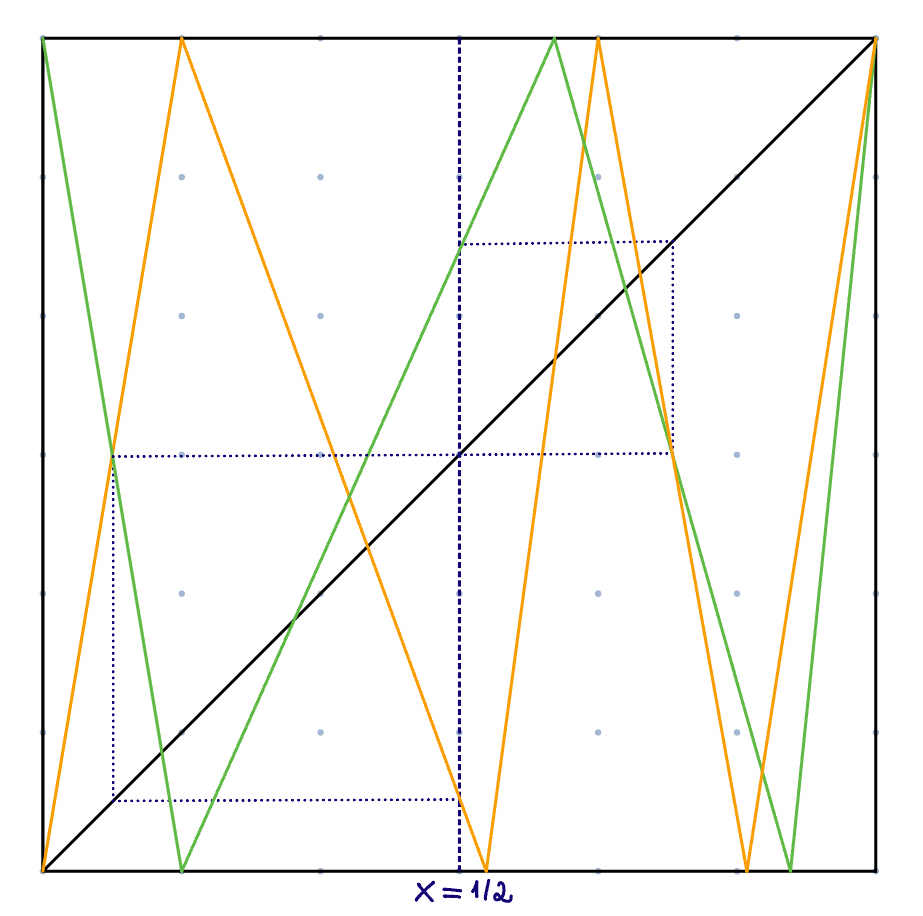}
    \caption{(b) A pure \\two-periodic system.}
    \label{fig:graf2}
    \end{minipage}
    \end{figure}
    
    \item \textit{Pure aperiodic points} $x$: when $x$ is not fixed by any finite concatenation of maps in $(T_v)_{v=0}^{u-1}$. In this case, $m(\omega) \equiv \infty, K(\omega) \equiv 0$ and $M_\Gamma=0$. 

    Here are some compatible diagrams in this case:

    \begin{figure}[H]
    \centering
    \includegraphics[width=0.6\textwidth]{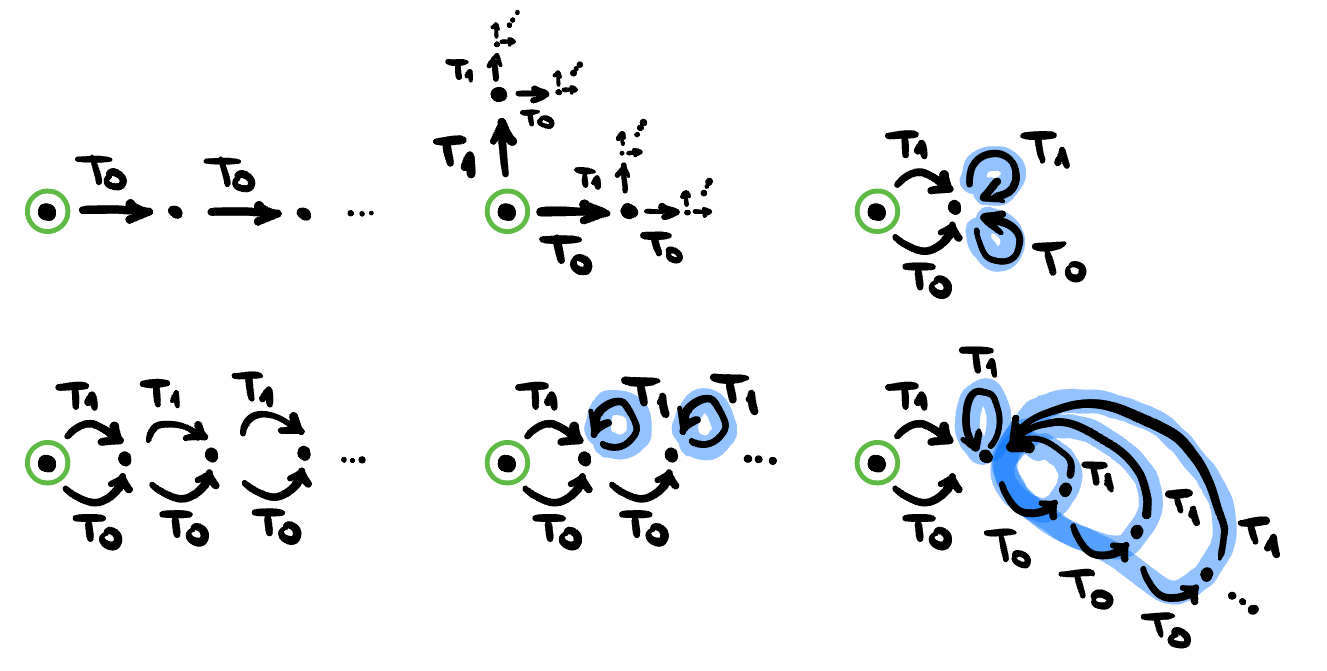}
    \caption{Some pure aperiodic diagrams.}
    \label{fig:diagram2}
    \end{figure}

    Explicit examples realizing these structures (or exhibiting these sorts of behaviors) can be tricky to construct\footnote{We are not claiming that every (possible) diagram compatible with (ii) can be realized by examples in the class (C\ref{cond:finitemaps}-C\ref{cond:leb}).}, especially when the diagram is infinite and one has to control the behavior of infinitely many iterates of the system\footnote{In this direction, beta maps with irrational translation and rational (random) targets were studied in \cite{atnip2023compound}. They do not fit exactly in the class (C\ref{cond:finitemaps}-C\ref{cond:leb}) because they do not have subjective branches. However, they can be dealt with here by considering their action on $S^1$ rather than on $[0,1]$. See remark \ref{rmk:beta}.}. Notice, however, that, once the maps are fixed, the set of pure aperiodic $x$'s is generic, because it is given by $$M \setminus \bigcup_{p \ge 1}\bigcup_{(v_0,\ldots,v_{p-1}) \in \{0,\ldots,u-1\}^{p}}\operatorname{Fix}(T_{v_{p-1}}\circ \ldots \circ T_{v_0}),$$ which is co-countable.

    For a finite diagram such as the last one in the first row, we can consider the following explicit example:

    \begin{figure}[H]
    \centering
    \includegraphics[width=0.35\textwidth]{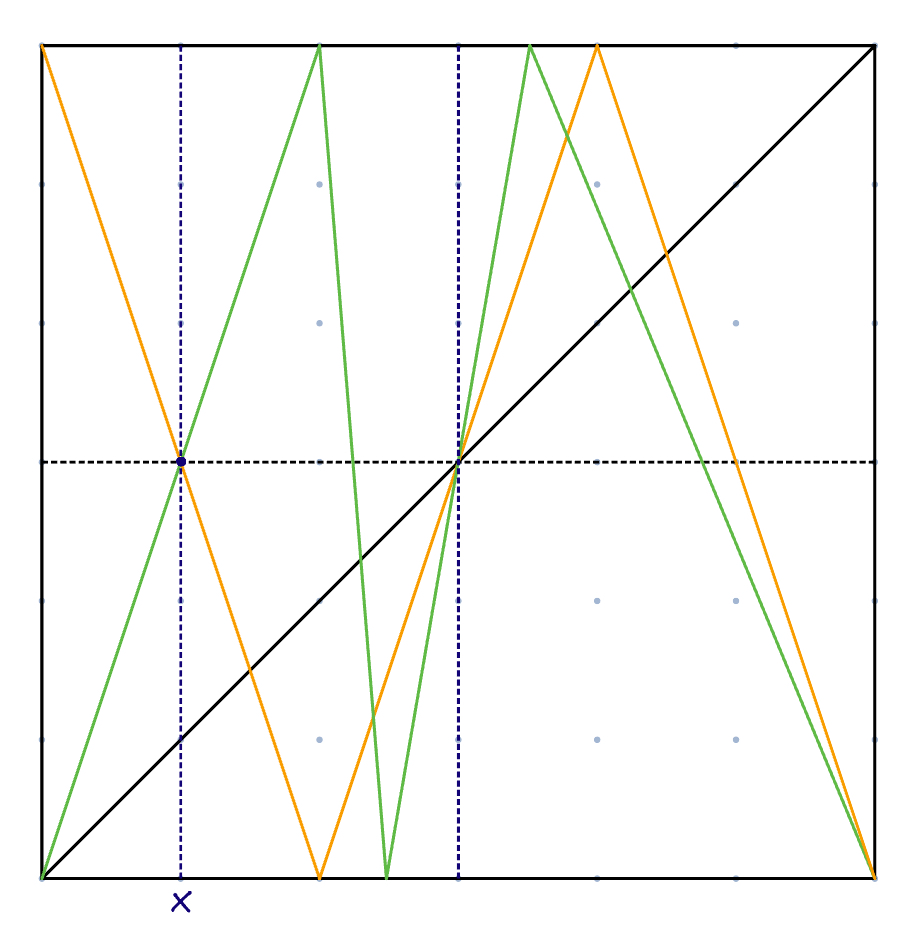}
    \caption{A pure aperiodic system.}
    \label{fig:graf3}
    \end{figure}
    \item \textit{Hybrid}. This is the general case. They can combine the behavior in (i) and (ii) while still verifying $M_\Gamma < \infty$. Here are some possible diagrams in this case:
    \begin{figure}[H]
    \centering
    \includegraphics[width=0.85\textwidth]{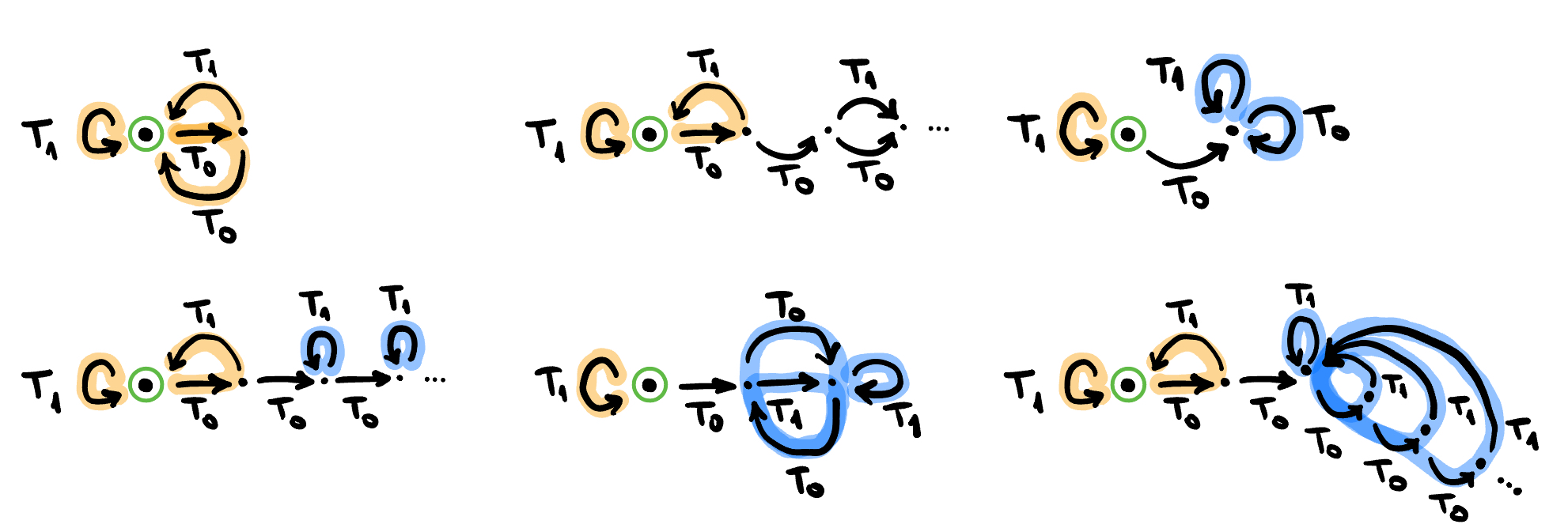}
    \caption{Some hybrid diagrams}
    \label{fig:diagram3}
    \end{figure}    

    For a finite diagram such as the last one in the first row, we can consider the following explicit example:
    \begin{figure}[H]
    \centering
    \includegraphics[width=0.35\textwidth]{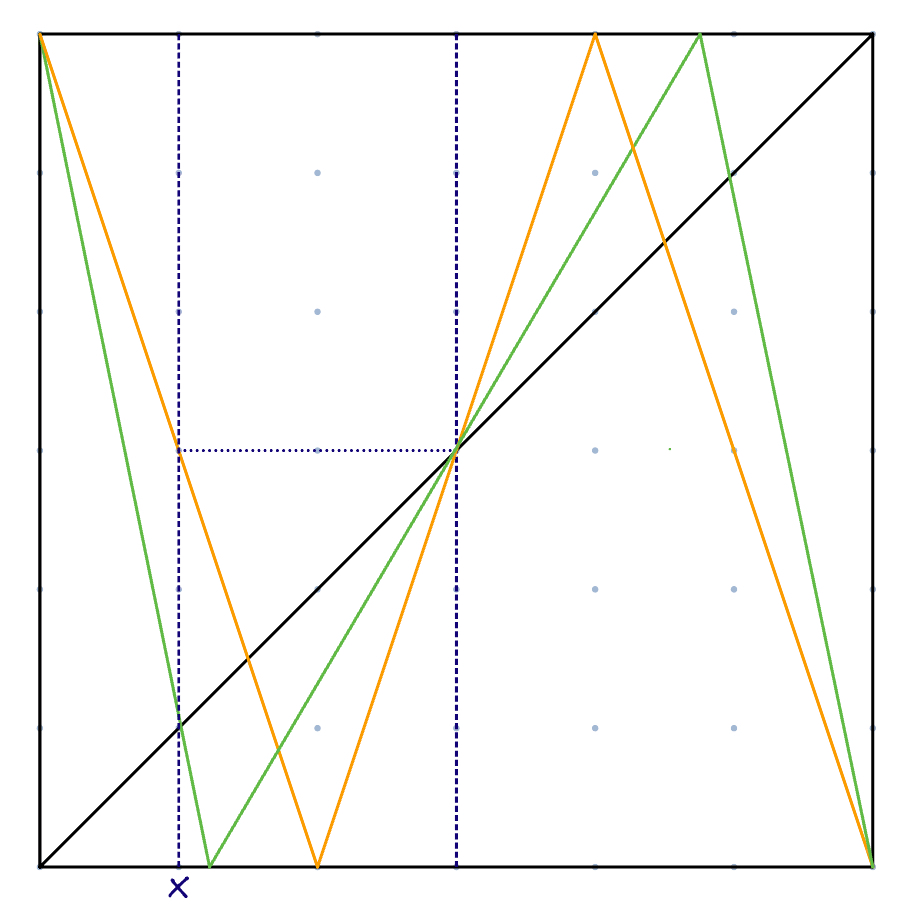}
    \caption{A hybrid system.}
    \label{fig:graf4}
    \end{figure}
    \item \textit{Non-examples}. Here are some diagrams which do not satisfy $M_\Gamma < \infty$. 

    \begin{figure}[H]
    \centering
    \includegraphics[width=0.65\textwidth]{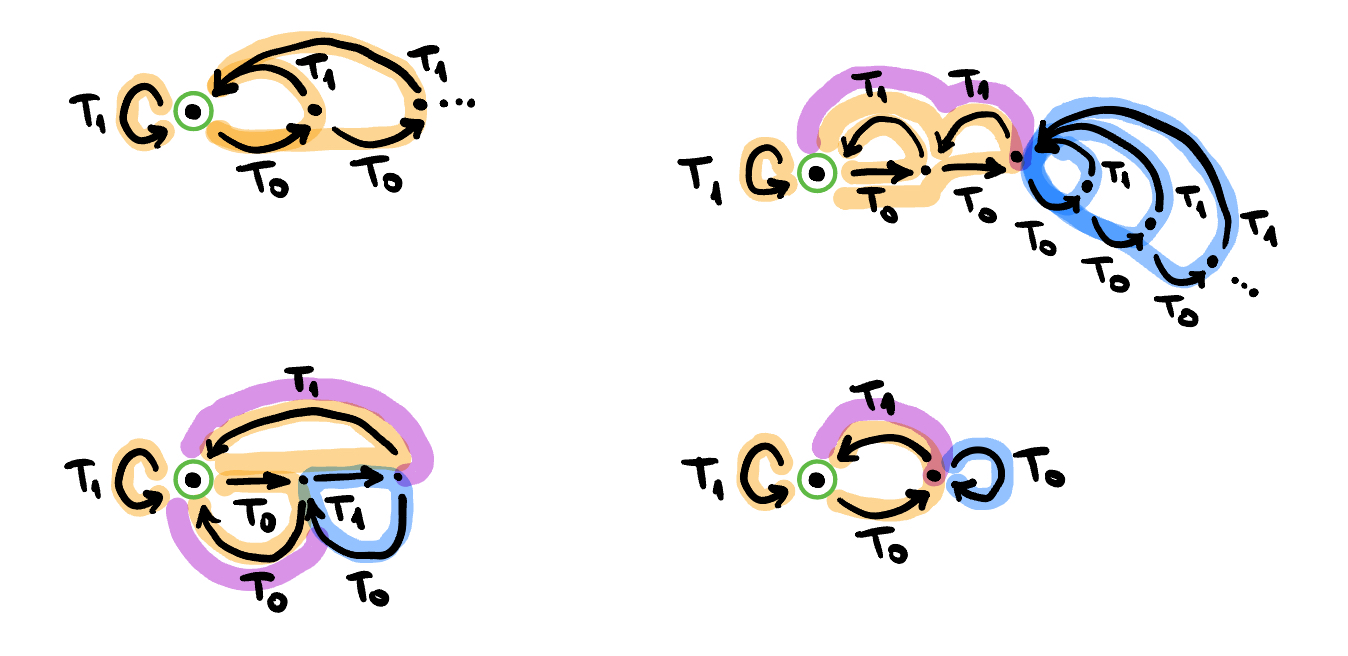}
    \caption{Some non-examples diagrams.}
    \label{fig:diagram4}
    \end{figure}

    Notice that whenever a purple path occurs arbitrarily large periods can be formed. But this can occur without purple paths as well, as in the first diagram. Moreover, this can occur both with infinite diagrams (the first two) and with finite diagrams (the last two).  
\end{enumerate}

\begin{remark}
    It is not being claimed that systems as in (iv) are not covered by the theory presented in theorem \ref{thm:main}. It is just being said that systems as in (iv) are not treated with the techniques used in this section (to calculate underlying $\alpha_\ell$'s).
\end{remark}

\begin{proof}[Proof of theorem \ref{thm:expanding}]
    
It is enough to check that conditions (C\ref{cond:finitemaps})-(C\ref{cond:leb}) imply the hypotheses (H\ref{hyp:amb}-H\ref{hyp:dec}, H\ref{hyp:return}-H\ref{hyp:param}) of section \ref{sec:wrksetup}.

Here we check just (H\ref{hyp:return}) and the rest are left for the reader (who should choose $B_R(y_k^{\omega,n}) \equiv (0,1), \mathfrak{d}=0, \stackrel{-}{\mathcal{C}} \hspace{-3mm} \phantom{\mathcal{C}}_n^\omega, \stackrel{+-}{\mathcal{C}} \hspace{-4mm} \phantom{\mathcal{C}}_n^\omega \equiv \emptyset, d_0,d_1=1, \kappa \in \mathbb{R}_{>1}, \mathfrak{p} \in \mathbb{R}_{>1}$). 

We start calculating $\alpha_\ell$'s. Consider $\ell \ge 1$ and $\omega \in \Omega$ (eventually taken in a set of full measure). 

Consider
\begin{equation}\label{eq:Lomegalarge}
    L \ge M_{\ell \wedge K(\omega)}(\omega).
\end{equation}

Then take $\rho_0(\omega,L)=\rho_0(\pi_0(\omega),\ldots,\pi_L(\omega))$ small enough so that $\rho \le \rho_0(\omega,L)$ implies

\begin{equation}\label{eq:smallrho1}
    T_\omega^i B_\rho(x(\omega)) \cap B_\rho(x(\theta^i \omega)) = \emptyset \text{, } \forall i \in [1,L] \setminus \{M_k(\omega) : k \in [1,K(\omega)]\},
\end{equation}which can be guaranteed noticing that \begin{enumerate}[label=\alph*)]
    \item returns occur precisely in the instants $\{M_k(\omega) : k \in [1,K(\omega)]\}$ and not in between (by minimality),
    \item $T_\omega^i$ is continuous on $x(\omega)$ ($\forall i \ge 1$), a.s., because, by (C\ref{cond:goodtarget}), one has $$x(\omega) \in \{x_0,x_1\} \subset \bigcap_{l=1}^\infty \mathcal{A}^\omega_{l} \subset \mathcal{A}^\omega_{i}\text{, a.s.}$$
\end{enumerate}

Because of the previous constraint, one could have started with $L$'s of the form $L = M_{q_L \wedge K(\omega)}(\omega)$, $q_L \ge \ell$ (so still satisfying equation (\ref{eq:Lomegalarge})), in the sense that other choices of $L$ are superfluous from the viewpoint of the quantity we will study, $Z^{\omega,L}_{\Gamma_\rho}$. Then one could restrict $\rho_0(\omega,L)$ further so that $\rho \le \rho_0(\omega,L)$ implies:
\begin{equation}\label{eq:smallrho2}
 T_{\theta^{M_{k'}(\omega)}\omega }^{ \overbrace{\scriptstyle M_{k}(\omega) - M_{k'}(\omega)}^{M_{k-k'}\left(\theta^{M_{k'}(\omega)} \omega \right)} } B_\rho(x(\theta^{M_{k'}(\omega)}\omega)) \subset A^{\theta^{M_{k}(\omega)}\omega}_{M_{q_L \wedge K(\omega)}(\omega) - M_k(\omega) }\left(x(\theta^{M_{k}(\omega)}\omega)\right),\hspace{-0.5mm}
\substack{\displaystyle \forall k',k \in [0,q_L \wedge K(\omega)]\\ \displaystyle k' \le k},
\end{equation}
which can be guaranteed noticing that
\begin{enumerate}[label=\alph*)]
    \item $T_{\theta^{M_{k'}(\omega)}\omega }^{ \scriptstyle M_{k-k'}\left(\theta^{M_{k'}(\omega)} \omega \right) } x(\theta^{M_{k'}(\omega)}\omega) = x(\theta^{M_{k-k'}\left(\theta^{M_{k'}(\omega)} \omega \right)} \theta^{M_{k'}  (\omega)}\omega) = x(\theta^{M_k(\omega)}\omega)$, with the later in $\{x_0,x_1\} \stackrel{(C\ref{cond:goodtarget})}{\subset} \bigcap_{l=1}^\infty \mathcal{A}^{\theta^{M_k(\omega)}\omega}_l \subset \mathcal{A}^{\theta^{M_k(\omega)}\omega}  _{M_{q_L \wedge K(\omega)}(\omega) - M_k(\omega) }$,
    \item $T_{\theta^{M_{k'}(\omega)}\omega }^{ \scriptstyle M_{k-k'}\left(\theta^{M_{k'}(\omega)} \omega \right) }$ is continuous at $x(\theta^{M_{k'}(\omega)}\omega)$,  because, again by  (C\ref{cond:goodtarget}), one has $x(\theta^{M_{k'}(\omega)}\omega) \in \mathcal{A}^{\theta^{M_{k'}(\omega)}\omega }_{M_{k-k'}\left(\theta^{M_{k'}(\omega)} \omega \right)}$.
\end{enumerate} 

The point with condition (\ref{eq:smallrho2}) is to say that, $\rho$ is so small that, starting from any pre-intermediary time $M_{k'}(\omega)$ and going to any post-intermediary step $M_k(\omega)$, the initial $\rho$-sized ball grows under iteration up to time $M_k(\omega)$ but still fitting inside a partition domain (thus an injectivity domain) of the map evolving from time $M_k(\omega)$ until the end, $M_{q_L \wedge K(\omega)}$. In particular, the image balls won't break injectivity (or wrap around). Most importantly, it is implied that for any $z \in B_\rho(x(\omega))$:
\begin{equation*}
    \left(I_{0}^{\omega,\rho}(z), I_{M_1(\omega)}^{\omega,\rho}(z), \ldots, I_{M_{q_L \wedge K(\omega)}(\omega)}^{\omega,\rho}(z)\right)
\end{equation*}is a binary sequence starting with a batch of $1$'s followed by a (possibly degenerate) batch of $0$'s (e.g. $11100$, $1111$ or $10000$).

Then, for $\omega$, $L$ and $\rho$ as above, one has:
\begin{equation*}
    \hat\alpha^\omega_\ell(L,\rho) \mu_\omega(\Gamma_\rho(\omega)) = \mu_\omega(Z^{\omega,L-1}_{*\Gamma_\rho} \ge \ell -1 , I^{\omega,\rho}_0=1) 
\end{equation*}
\begin{equation*}
    \stackrel[]{\text{(\ref{eq:smallrho1})}}{=} \mu_\omega\left(\sum_{j \in \{M_k(\omega) : k \in [1,q_L \wedge K(\omega)]\} } I^{\omega,\rho}_j \ge \ell -1, I^{\omega,\rho}_0=1 \right)
\end{equation*}
\begin{equation*}
    \stackrel[]{\text{(\ref{eq:smallrho2})}}{=} \begin{cases}
        \mu_\omega\left(I_0^{ \omega,\rho}=1,  I^{\omega,\rho}_{M_1(\omega)} =1, \ldots, I^{\omega,\rho}_{M_{\ell-1}(\omega)}=1  \right)\text{, if }\ell-1 \le K(\omega) \\ 0\hspace{74mm}\text{, otherwise}
    \end{cases}
\end{equation*}
\begin{equation*}
    \stackrel[]{\text{(\ref{eq:smallrho2})}}{=} \begin{cases}
        \mu_\omega\left(  I^{\omega,\rho}_{M_{\ell-1}(\omega)}=1  \right)\text{, if }\ell-1 \le K(\omega) \\ 0\hspace{30mm}\text{, otherwise}
    \end{cases},
\end{equation*}so that
\begin{equation*}
\resizebox{1\hsize}{!}{
\begin{tabular}{@{}c @{}}
    $\displaystyle \alpha^\omega_\ell(L,\rho)  \frac{\mu_\omega(\Gamma_\rho(\omega))}{\hat\mu(\Gamma_\rho)} \stackrel[]{\text{(\ref{eq:deftildealphaomegainner2})}}{=} \begin{cases} \displaystyle \frac{\displaystyle \mu_\omega\left( (T_\omega^{M_{\ell-1}(\omega)})^{-1} \Gamma_\rho(\theta^{M_{\ell -1}(\omega)} \omega) \right)}{\displaystyle \hat\mu(\Gamma_\rho)}
    - \frac{\displaystyle \mu_\omega\left( (T_\omega^{M_{\ell}(\omega)})^{-1} \Gamma_\rho(\theta^{M_{\ell}(\omega) }\omega) \right)}{\displaystyle \hat\mu(\Gamma_\rho)}\text{, if } \ell \le K(\omega),\\
    \frac{\displaystyle \mu_\omega\left( (T_\omega^{M_{\ell-1}(\omega)})^{-1} \Gamma_\rho(\theta^{M_{\ell -1}(\omega)}\omega) \right)}{\displaystyle \hat\mu(\Gamma_\rho)} \hspace{55mm} \text{, if } \ell = K(\omega)+1,\\
    0 \hspace{109mm}\text{, if }\ell \ge K(\omega)+2.
    \end{cases}$ 
\end{tabular}}
\end{equation*}

Notice that
\begin{equation*}
    \frac{\displaystyle \mu_\omega\left( (T_\omega^{M_{\ell-1}(\omega)})^{-1} \Gamma_\rho(\theta^{M_{\ell -1}(\omega)} \omega) \right)}{\displaystyle \hat\mu(\Gamma_\rho)} = \frac{\operatorname{Leb}\left(h_\omega \mathbbm{1}_{(T_\omega^{M_\ell(\omega)})^{-1} \Gamma_\rho(\theta^{M_\ell(\omega)}\omega ) } \right)}{\int_\Omega {\operatorname{Leb}(h_\omega \mathbbm{1}_{\Gamma_\rho(\omega)})} d \mathbb{P}(\omega)}
\end{equation*}
\begin{equation*}
    = \frac{[h_\omega(x(\omega))+ \mathcal{O}(\epsilon)] \operatorname{Leb}\left( (T_\omega^{M_\ell(\omega)})^{-1} B_\rho(x(\theta^{M_\ell(\omega)}\omega ))  \right)}{\int_\Omega [h_\omega(x(\omega))+ \mathcal{O}(\epsilon)] {\operatorname{Leb}( B_\rho(x(\omega)))} d \mathbb{P}(\omega)}
\end{equation*}
\begin{equation*}
    = \frac{[h_\omega(x(\omega))+ \mathcal{O}(\epsilon)] \Big[\big(JT_\omega^ {M_\ell(\omega)}(x(\omega))\big)^{-1}+\mathcal{O}(\epsilon)\Big] \operatorname{Leb}\left( B_\rho(x(\theta^{M_\ell(\omega)}\omega ))  \right)}{\int_\Omega [h_\omega(x(\omega))+ \mathcal{O}(\epsilon)] {\operatorname{Leb}( B_\rho(x(\omega)))} d \mathbb{P}(\omega)}
\end{equation*}
\begin{equation}\label{eq:jacobian}
    = \frac{h_\omega(x(\omega))+ \mathcal{O}(\epsilon) }{\int_\Omega h_\omega(x(\omega))+ \mathcal{O}(\epsilon) d \mathbb{P}(\omega)} \Big[\big(JT_\omega^ {M_\ell(\omega)}(x(\omega))\big)^{-1}+\mathcal{O}(\epsilon)\Big]
\end{equation}
where, given $\epsilon>0$ (for $\omega$ and $L$ chosen as above), we've considered $\rho \le \rho_1({\omega}{,}{\epsilon})< r$ (see (C\ref{cond:leb})), with $\rho_1(\omega,\epsilon)$ small enough so that for any $\rho \le \rho_1(\omega,\epsilon)$:
\begin{equation*}
    h_\omega(z) = h_\omega(x(\omega)) + \mathcal{O}(\epsilon), \text{ } \forall z \in B_\rho(x(\omega))
\end{equation*}and
\begin{equation*}
    \big(JT_\omega^ {M_\ell(\omega)}(z)\big)^{-1} = \big(JT_\omega^ {M_\ell(\omega)}(x(\omega))\big)^{-1}+\mathcal{O}(\epsilon), \text{ } \forall z \in B_\rho(x(\omega)).
\end{equation*}
We can write 
\begin{equation*}
    \rho_1(\omega,\epsilon) = \left(\epsilon / H_\beta(h_\omega|_{B_r(x(\omega))})\right)^{\nicefrac{1}{\beta}} \wedge \left(\epsilon / H_\beta\left([JT_\omega^{M_{\ell}(\omega)}]^{-1}|_{B_r(x(\omega))}\right)\right)^{\nicefrac{1}{\beta}} \wedge 1.
\end{equation*}

We can use (C\ref{cond:finitemaps}) (finitely many maps and uniformly bounded second derivatives), (C\ref{cond:goodtarget}) (uniformly bounded finite-periods) and (C\ref{cond:leb}) (uniform Holder constants for the densities) to pass to controls that are uniform on $\omega$ and then integrate: for any $\epsilon >0$, $L \ge L_* := \ell M_\Gamma$ and $$\rho \le \rho_*(L,\epsilon) := \min_{\substack{(v_0,\ldots,v_L) \\ \in\ \{0,1\}^{L+1}}} \rho_1(v_0,\ldots,v_L)  \wedge  \essinf_\omega \rho_1(\omega,\epsilon) \in (0,1] ,$$ one has
\begin{equation*}
\resizebox{1\hsize}{!}{
\begin{tabular}{@{}l @{}}
 $\displaystyle {\Huge \alpha_\ell(L,\rho) =}$ \\$ {\mybigint}_{\hspace{-1mm}\scalebox{1.1}{$\Omega$}} \hspace{2mm}\begin{cases}
    \frac{\displaystyle h_\omega(x(\omega)) + \mathcal{O}(\epsilon)}{\displaystyle \int_\Omega h_\omega(x(\omega)) + \mathcal{O}(\epsilon) d \mathbb{P}(\omega) } \left[ \left(JT_\omega^{M_{\ell-1}(\omega)}(x(\omega))\right)^{-1} + \mathcal{O}(\epsilon) - \left(JT_\omega^{M_{\ell}(\omega)}(x(\omega))\right)^{-1} - \mathcal{O}(\epsilon)\right] \text{, if }\ell \le K(\omega) \\
        \frac{\displaystyle h_\omega(x(\omega)) + \mathcal{O}(\epsilon)}{\displaystyle \int_\Omega h_\omega(x(\omega)) + \mathcal{O}(\epsilon) d \mathbb{P}(\omega) } \left[ \left(JT_\omega^{M_{\ell-1}(\omega)}(x(\omega))\right)^{-1} + \mathcal{O}(\epsilon)\right] \hspace{53mm}  \text{, if }\ell=K(\omega)+1 \\
        0\hspace{155mm}\text{, if }\ell \ge K(\omega)+2
    \end{cases} d\mathbb{P}(\omega),$
\end{tabular}}
\end{equation*}
then taking iterated limits of the type $\lim_{\epsilon} \lim_{L} \overline{\varliminf}_{\rho}$ one finds that
\begin{equation}\label{eq:calculatedalphas}
\resizebox{1\hsize}{!}{
\begin{tabular}{@{}c @{}}
$\displaystyle \alpha_\ell = {\mybigint}_{\hspace{-1mm}\scalebox{1.1}{$\Omega$}} \hspace{2mm}\begin{cases}
    \frac{\displaystyle h_\omega(x(\omega))}{\displaystyle \int_\Omega h_\omega(x(\omega)) d \mathbb{P}(\omega) } \left[ \left(JT_\omega^{M_{\ell-1}(\omega)}(x(\omega))\right)^{-1}  - \left(JT_\omega^{M_{\ell}(\omega)}(x(\omega))\right)^{-1} \right] \text{, if }\ell \le K(\omega) \\
        \frac{\displaystyle h_\omega(x(\omega)) }{\displaystyle \int_\Omega h_\omega(x(\omega))  d \mathbb{P}(\omega) } \left[ \left(JT_\omega^{M_{\ell-1}(\omega)}(x(\omega))\right)^{-1} \right] \hspace{40mm}  \text{, if }\ell=K(\omega)+1 \\
        0\hspace{115mm}\text{, if }\ell \ge K(\omega)+2
    \end{cases} d\mathbb{P}(\omega).$
\end{tabular}}
\end{equation}

The following diagram helps one to visualize how the integrand in equation (\ref{eq:calculatedalphas}), with the factor $\frac{h_\omega(x(\omega))}{\int_\Omega h_\omega(x(\omega)) d\mathbb{P}(\omega)}$ suppressed, changes
\begin{enumerate}[label=\alph*)]
    \item for $\omega$'s with varying amounts of periodicity (read the different lines),
    \item as $\ell$ grows (read the different columns).
\end{enumerate}

\begin{equation}\label{eq:alphacasebycase}
\resizebox{1\hsize}{!}{
\begin{tabular}{r @{} c @{} c @{} c @{} c @{} c @{} c @{} l}
     &  & $\ell=1$ & & $\ell=2$ & & $\ell=3$ \\
      $K(\omega) = \infty $ & $: \Bigg($ & $1 - 1/ JT^{m_0 (\omega)}_{\omega } (x(\omega))$&${,}$&$\frac{\displaystyle 1- 1/JT^{m_1 (\omega)}_{\theta^{m_0(\omega)}\omega } (x(\omega))}{\displaystyle JT^{m_0 (\omega)}_{\omega } (x(\omega))}$&${,}$&$\frac{\displaystyle 1- 1/JT^{m_2 (\omega)}_{\theta^{m_0(\omega)+m_1(\omega)}\omega } (x(\omega))}{\displaystyle JT^{m_0 (\omega)}_{\omega } (x(\omega)) JT^{m_1 (\omega)}_{\theta^{m_0(\omega)}\omega } (x(\omega))}$&$, \ldots \Bigg)$  \\
      $K(\omega)=0$  & $: ($ & $1$&${,}$&$0$&${,}$&$0$&$,\stackrel{\bar{0}}{\ldots})$\\
      $K(\omega)=1$  & $: \Bigg($ & $1 - 1/ JT^{m_0 (\omega)}_{\omega } (x(\omega))$&${,}$&$ \frac{\displaystyle1}{\displaystyle JT^{m_0 (\omega)}_{\omega } (x(\omega))}$&${,}$&$0$&$,\stackrel{\bar{0}}{\ldots}\Bigg)$\\
      $K(\omega)=2$  & $: \Bigg($ & $1 - 1/ JT^{m_0 (\omega)}_{\omega } (x(\omega))$&${,}$&$\frac{\displaystyle 1- 1/JT^{m_1 (\omega)}_{\theta^{m_0(\omega)}\omega } (x(\omega))}{\displaystyle JT^{m_0 (\omega)}_{\omega } (x(\omega))}$&${,}$&$\frac{\displaystyle 1 }{\displaystyle JT^{m_0 (\omega)}_{\omega } (x(\omega)) JT^{m_1 (\omega)}_{\theta^{m_0(\omega)}\omega } (x(\omega)) }$&$,\stackrel{\bar{0}}{\ldots}\Bigg).$\\
    \end{tabular}}
\end{equation}

Having found that $\alpha_\ell$'s exist and have explicit representation, it remains to check that $\alpha_1 > 0$ and $\sum_{\ell=1}^\infty \ell^2 \alpha_\ell < \infty$.

It holds that $\alpha_1 > 0$ because the quantity found in the first column of diagram (\ref{eq:alphacasebycase}) is bounded below by $1- 1/d_{\min}>0$.

Moreover, \hspace{-0.25mm}considering \hspace{-0.25mm}the \hspace{-0.25mm}integrand \hspace{-0.25mm}of \hspace{-0.25mm}equation (\ref{eq:calculatedalphas}), \hspace{-0.25mm}we \hspace{-0.25mm}see \hspace{-0.25mm}that \hspace{-0.25mm}$\alpha_\ell$ \hspace{-0.25mm}is \hspace{-0.25mm}at \hspace{-0.25mm}most \hspace{-0.25mm}$(\nicefrac{1}{d_{\min}})^{\ell-1}$, therefore
\begin{equation*}
    \sum_{\ell=1}^\infty \ell^2 \hat\alpha_\ell \le \sum_{\ell=1}^\infty \ell^2 (\nicefrac{1}{{d_{\min}}})^{\ell-1}<\infty,
\end{equation*}
since $d_{\min} >1$.

This concludes that conditions (C\ref{cond:finitemaps})-(C\ref{cond:goodtarget}) imply the hypotheses of theorem \ref{thm:main} and that the associated $\alpha_\ell$'s satisfy (H\ref{hyp:return}) and the hypotheses of theorem \ref{thm:lambdaalpha}.

Let us finally notice that in this case, where $d_0,d_1,\eta,\beta =1$ and $\mathfrak{p}=
\infty$ (i.e., can be taken arbitrarily large), $q(d_0,d_1,\eta,\beta,\mathfrak{p})$, reduces to $1$. This is because the system of inequalities appearing at end of proof of lemma \ref{lem:variance}  reduces to only two ($1>\alpha$ and $w > 2$ for $(\alpha,w) \in (0,1) \times (1,\infty)$) which admit a solution that opens a margin of (at least) $1$ in both equations.
\end{proof}

\begin{remark}\label{rmk:beta}
    As it comes to $M=[0,1]$, the use of surjective branches in (C\ref{cond:finitemaps}) was to facilitate as much as possible the presentation of covers and cylinders in (H\ref{hyp:inv}) below. But these can be still presented without surjective branches. For example, one could present them for the beta maps $T_0(x) = 1/2 + 2x \text{ (mod 1)}$ and $T_1(x) = 1/2 + 3x \text{ (mod 1)}$. On the other hand, to have the type of decay against Lipschitz test functions we will be after in (H\ref{hyp:dec}), the interval maps ought to have subjective branches (otherwise the good functional space becomes bounded variation instead of Lipschitz), which is not the case of the previous beta maps. In this situation, one has to resort to seeing these beta maps as acting smoothly in $M=S^1$, and cylinders will not anymore mark regions of continuity/differentiability, but will still mark injective regions.
\end{remark} 

\begin{remark}\label{rmk:cones}
    Condition (C\ref{cond:leb}) was included to make transparent what is really used in the argument above. But one should be aware that conditions (C\ref{cond:finitemaps}-C\ref{cond:drivingmeas}) suffice to conclude that densities are a.s. bounded away from $0$ and $\infty$ and a.s. admit a uniform Holder constant (on the entire manifold $M$). See \cite{rousseau2014exponential} Example 21. This is stronger than (C\ref{cond:leb}), which then can, technically, be omitted from the list of conditions.
\end{remark}

Now we concentrate on analyzing the these conclusions of theorem \ref{thm:expanding} refine (or how $\alpha_\ell$'s in equation (\ref{eq:calculatedalphas}) simplify) when additional conditions are considered.

\begin{cor} Consider the assumptions of theorem \ref{thm:expanding} and assume further that $K(\omega)=0$ a.s.. 

Then \begin{equation}
    \alpha_\ell = \begin{cases}
        1 \text{, if }\ell =1 \\ 0 \text{, if }\ell\ge 2
    \end{cases},
\end{equation}and CPD in the limit theorem boils down to a standard Poisson.
\end{cor}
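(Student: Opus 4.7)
The plan is to directly substitute the assumption $K(\omega) = 0$ a.s.\ into the explicit formula for $\alpha_\ell$ provided by Theorem \ref{thm:expanding}, and then read off the resulting cluster size distribution $(\lambda_\ell)_\ell$ via Theorem \ref{thm:lambdaalpha}.

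First, I would observe that when $K(\omega) = 0$, the case $\ell \le K(\omega)$ is vacuous, the case $\ell = K(\omega)+1$ occurs exactly at $\ell = 1$, and the case $\ell \ge K(\omega)+2$ is exactly $\ell \ge 2$. Plugging $\ell = 1$ into the middle branch and using $M_0(\omega) \equiv 0$ (so $JT_\omega^{M_0(\omega)}(x(\omega)) = 1$), the integrand reduces to
\[
\frac{h_\omega(x(\omega))}{\int_\Omega h_{\omega'}(x(\omega'))\,d\mathbb{P}(\omega')},
\]
whose integral over $\Omega$ equals $1$. Hence $\alpha_1 = 1$, while $\alpha_\ell = 0$ for all $\ell \ge 2$ by the third branch.

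Next I would invoke Theorem \ref{thm:lambdaalpha}, which gives $\lambda_\ell = (\alpha_\ell - \alpha_{\ell+1})/\alpha_1$. This immediately yields $\lambda_1 = (1-0)/1 = 1$ and $\lambda_\ell = 0$ for $\ell \ge 2$, i.e.\ the cluster size distribution is the Dirac mass at $1$.

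Finally, I would conclude by recalling Definition \ref{def:cpd}: when $(\lambda_\ell)_\ell = \delta_1$, the iid cluster sizes $Q_j$ in the definition are identically equal to $1$, so $M = \sum_{j=1}^N Q_j = N$, where $N$ is Poisson with intensity $t\alpha_1 = t$. Thus $\operatorname{CPD}_{t,\delta_1}$ coincides with the standard Poisson distribution of parameter $t$, and the limit in Theorem \ref{thm:expanding} becomes
\[
\mu_\omega\bigl(Z^{\omega,\lfloor t/\hat\mu(\Gamma_{\rho_m})\rfloor}_{\Gamma_{\rho_m}} = n\bigr) \stackrel[m\to\infty]{\mathbb{P}\text{-a.s.}}{\longrightarrow} e^{-t}\frac{t^n}{n!}.
\]
There is no real obstacle here: the entire corollary is a direct substitution plus invocation of the already-proved Theorems \ref{thm:expanding} and \ref{thm:lambdaalpha}.
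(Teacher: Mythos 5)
Your proof is correct and matches the paper's intent exactly: the paper dismisses this corollary with the single word ``Immediate,'' and your direct substitution of $K(\omega)=0$ into the formula of Theorem \ref{thm:expanding} (using $M_0(\omega)\equiv 0$ so the $\ell=1$ integrand reduces to the normalized density, integrating to $1$), followed by Theorem \ref{thm:lambdaalpha} and Definition \ref{def:cpd}, is precisely the intended verification. Nothing is missing.
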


\begin{proof}
Immediate.
\end{proof}

\begin{cor}Consider the assumptions of theorem \ref{thm:expanding} and assume further that $\mathbb{P}$ is Bernoulli, $K(\omega)=\infty$ a.s. and $$h_\omega(x(\omega)) \perp \left( JT_{\theta^{M_j(\omega)}\omega}^{m_j(\omega)} (x(\theta^{M_j(\omega)}\omega ))\right)_j.\footnotemark$$ \footnotetext{This occurs when, for example, when $h_\omega \equiv 1$ a.s., or much more generally, when $h_\omega$ depends only on the past entries of $\omega$ (see, e.g., \cite{ledrappier1988entropy} prop. 1.2.3 and \cite{kifer2006random} prop. 3.3.2).}

Then $$\alpha_\ell = (D-1)D^{-\ell},\text{ with }D^{-1} := \int_\Omega [ JT_\omega^{m_0(\omega)} x(\omega)]^{-1} d \mathbb{P}(\omega),$$and the CPD in the limit theorem boils down to a Polya-Aeppli (or geometric) one.
\end{cor}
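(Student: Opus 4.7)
The plan is to start from the general formula for $\alpha_\ell$ established in equation (\ref{eq:calculatedalphas}) of Theorem \ref{thm:expanding}. Since $K(\omega) = \infty$ almost surely, only the first branch of the piecewise definition contributes, so
\[
\alpha_\ell = \int_\Omega \frac{h_\omega(x(\omega))}{\bar h}\left[\left(JT_\omega^{M_{\ell-1}(\omega)}(x(\omega))\right)^{-1} - \left(JT_\omega^{M_\ell(\omega)}(x(\omega))\right)^{-1}\right]d\mathbb{P}(\omega),
\]
where $\bar h := \int_\Omega h_\omega(x(\omega))\,d\mathbb{P}(\omega)$. First I would apply the chain rule for the random Jacobian to write
\[
\left(JT_\omega^{M_\ell(\omega)}(x(\omega))\right)^{-1} = \prod_{j=0}^{\ell-1} J_j(\omega)^{-1}, \qquad J_j(\omega) := JT_{\theta^{M_j(\omega)}\omega}^{m_j(\omega)}\!\left(x(\theta^{M_j(\omega)}\omega)\right),
\]
with the empty product (for $\ell=1$) equal to $1$, which is consistent with $M_0 \equiv 0$.

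The key step is identifying $(J_j)_{j\ge 0}$ as an i.i.d. sequence under $\mathbb{P}$, each distributed like $J_0$. Since $\mathbb{P}$ is Bernoulli and $m_0$ is a stopping time with respect to the forward filtration (bounded by $M_\Gamma < \infty$ by (C\ref{cond:goodtarget})), the strong Markov property for the shift gives that $\theta^{m_0(\omega)}\omega$ has the same law as $\omega$ and is independent of the coordinates $(\omega_0,\ldots,\omega_{m_0(\omega)-1})$, hence independent of $J_0$. Iterating across the consecutive return blocks yields the desired i.i.d. structure, with common mean $\mathbb{E}[J_0^{-1}] = D^{-1}$.

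Combining this with the assumption that $h_\omega(x(\omega))$ is independent of the Jacobian sequence and noting $\int_\Omega h_\omega(x(\omega))/\bar h\,d\mathbb{P}=1$, the density factor decouples and
\[
\alpha_\ell = \mathbb{E}\!\left[\prod_{j=0}^{\ell-2}J_j^{-1}\right] - \mathbb{E}\!\left[\prod_{j=0}^{\ell-1}J_j^{-1}\right] = D^{-(\ell-1)} - D^{-\ell} = (D-1)D^{-\ell}.
\]
For $\ell=1$ this correctly reduces to $\alpha_1 = 1 - D^{-1}$. Finally, Theorem \ref{thm:lambdaalpha} gives
\[
\lambda_\ell = \frac{\alpha_\ell - \alpha_{\ell+1}}{\alpha_1} = \frac{(D-1)^2 D^{-\ell-1}}{(D-1)D^{-1}} = (1-D^{-1})\,(D^{-1})^{\ell-1},
\]
which is the geometric law on $\mathbb{N}_{\ge 1}$ with parameter $D^{-1}$. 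A compound Poisson with geometric multiplicity distribution is by definition a Polya–Aeppli distribution, so Theorem \ref{thm:main} and Theorem \ref{thm:expanding} yield the stated limit.

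The hard part will be the rigorous justification of the i.i.d. structure of $(J_j)_j$: one must verify that the Bernoulli hypothesis on $\mathbb{P}$ together with the stopping-time nature of the successive return times $M_j(\omega)$ transfers to measurable functionals of the shifted configuration under the skew-product dynamics. The boundedness $M_\Gamma < \infty$ of block lengths and the fact that $J_j$ depends measurably only on the coordinates $(\omega_{M_j(\omega)},\ldots,\omega_{M_{j+1}(\omega)-1})$ make this essentially a product-measure factorization argument, but care is needed because $M_{j+1}-M_j$ is itself random. Once this is in place, the remaining manipulations are algebraic.
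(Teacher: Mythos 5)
Your proposal is correct and follows essentially the same route as the paper: both start from the formula in equation (\ref{eq:calculatedalphas}), factor the inverse Jacobian over the successive return blocks, establish that the factors are i.i.d.\ under the Bernoulli measure because each depends on a disjoint block of coordinates (the paper carries this out by summing over the values of the return times, which is exactly the product-measure factorization you describe as the strong Markov property of the shift), and then telescope to get $\alpha_\ell=(D-1)D^{-\ell}$. Your explicit computation of $\lambda_\ell$ as the geometric law is a small addition the paper leaves implicit, but it does not change the argument.
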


\begin{proof}
Notice that  $K(\omega)=
\infty$ a.s. and the independence of $h_\omega(x(\omega))$ from the rest implies
\begin{equation*}
    \alpha_\ell = \int_\Omega \prod_{j=0}^{\ell-2} \left[ 
JT_{\theta^{M_j(\omega)}\omega}^{m_j(\omega)} (x(\theta^{M_j(\omega)}\omega )) \right]^{-1} d \mathbb{P}(\omega) - \int_\Omega \prod_{j=0}^{\ell-1} \left[ 
JT_{\theta^{M_j(\omega)}\omega}^{m_j(\omega)} (x(\theta^{M_j(\omega)}\omega )) \right]^{-1} d \mathbb{P}(\omega),
\end{equation*}then, after we make the point in I) that $\left( \omega \mapsto JT_{\theta^{M_j(\omega)}\omega}^{m_j(\omega)} (x(\theta^{M_j(\omega)}\omega ))\right)_j$ is independent under $\mathbb{P}$, we will find that
\begin{equation*}
    \alpha_\ell = \prod_{j=0}^{\ell-2} \int_\Omega \left[ 
JT_{\theta^{M_j(\omega)}\omega}^{m_j(\omega)} (x(\theta^{M_j(\omega)}\omega )) \right]^{-1} d \mathbb{P}(\omega) - \prod_{j=0}^{\ell-1} \int_\Omega  \left[ 
JT_{\theta^{M_j(\omega)}\omega}^{m_j(\omega)} (x(\theta^{M_j(\omega)}\omega )) \right]^{-1} d \mathbb{P}(\omega),
\end{equation*}which, we will argue in II), equals
\begin{equation*}
    \alpha_\ell = \prod_{j=0}^{\ell-2} \int_\Omega \left[ JT_\omega^{m_0(\omega)} x(\omega)\right]^{-1} d \mathbb{P}(\omega) - \prod_{j=0}^{\ell-1} \int_\Omega \left[ JT_\omega^{m_0(\omega)} x(\omega)\right]^{-1} d \mathbb{P}(\omega) = (D-1) D^{-\ell},
\end{equation*}where $D^{-1} := \int_\Omega [ JT_\omega^{m_0(\omega)} x(\omega)]^{-1} d \mathbb{P}(\omega)$, as desired.

Let us make the points that are missing. 

I) Notice first that
\begin{equation*}
    \mathbb{P}(m_0(\omega)=i_0, m_1(\omega)=i_1) = \mathbb{P}(m_0(\omega)=i_0, m_0(\theta^{i_0}\omega)=i_1) = \mathbb{P}(\mathbbm{1}_{\operatorname{Per}_{i_0}(\Gamma)} \mathbbm{1}_{\theta^{-i_0} \operatorname{Per}_{i_1}(\Gamma)} )
\end{equation*}
\begin{equation*}
    = \mathbb{P}(\mathbbm{1}_{\operatorname{Per}_{i_0}(\Gamma)}) \mathbb{P}(\mathbbm{1}_{\theta^{-i_0} \operatorname{Per}_{i_1}(\Gamma)} ) = \mathbb{P}(m_0(\omega)=i_0) \mathbb{P}( m_0(\omega)=i_1),
\end{equation*}where the first equality in the second line is because $(\pi_j)$'s are independent under $\mathbb{P}$ and the indicator functions can be expressed in terms of disjoint blocks of $(\pi_j)$'s, namely $\pi_0,\ldots, \pi_{i_0-1}$ and $\pi_{i_0},\ldots, \pi_{i_0+i_1-1}$. On the other hand
\begin{equation*}
    \mathbb{P}( m_1(\omega)=i_1) = \sum_{i_0} \mathbb{P}( m_0(\omega)=i_0, m_1(\omega)=i_1) 
\end{equation*}
\begin{equation*}
    = \sum_{i_0} \mathbb{P}(m_0(\omega)=i_0) \mathbb{P}( m_0(\omega)=i_1) = \mathbb{P}( m_0(\omega)=i_1).
\end{equation*}So combining the two previous chains of equality, we find that $m_0$ and $m_1$ are independent, i.e., $m_0 \perp m_1$.

Once again, since $(\pi_j)_j$ is an independency under $\mathbb{P}$, whenever two random variables $X$ and $Y$ can be expressed as $X = \phi \circ (\pi_0,\ldots,\pi_{i_0-1})$ and $Y = \psi \circ (\pi_{i_0},\ldots,\pi_{i_0+i_1-1})$, then $X \perp Y$. Similarly for $\pi$ instead of $\pi$. This is the case for $(JT_\cdot^{i_0}(x(\cdot)), \mathbbm{1}_{m_0(\cdot)=i_0}) \perp (JT_{\theta^{i_0}\cdot}^{i_1}(x \circ \theta^{i_0}(\cdot)),\mathbbm{1}_{m_1(\cdot)=i_1})$. 

Therefore
\begin{equation*}
\resizebox{1\hsize}{!}{
\begin{tabular}{@{}c @{}}
$\displaystyle \mathbb{P}\left( \left\{\omega: \big[JT_\omega^{m_0(\omega)} (x(\omega)) \big]^{-1} = a, \big[JT_{\theta^{m_0(\omega)}\omega}^{m_1(\omega)} (x(\theta^{m_0(\omega)}\omega)) \big]^{-1} = b   \right\} \right)$\\
$\displaystyle =\sum_{i_0} \sum_{i_1} \mathbb{P}\left( \left\{\omega: \big[JT_\omega^{i_0} (x(\omega)) \big]^{-1} = a, \big[JT_{\theta^{i_0}\omega}^{i_1} (x(\theta^{i_0}\omega)) \big]^{-1} = b, m_0(\omega)=i_0, m_0(\theta^{i_0}\omega)=i_1   \right\} \right)$\\
$\displaystyle =\sum_{i_0} \sum_{i_1} \left[\mathbb{P}\left( \left\{\omega: \big[JT_\omega^{i_0} (x(\omega)) \big]^{-1} = a, m_0(\omega)=i_0 \right\}\right) \mathbb{P}\left(\left\{\omega:\big[JT_{\theta^{i_0}\omega}^{i_1} (x(\theta^{i_0}\omega)) \big]^{-1} = b, m_0(\theta^{i_0}\omega)=i_1   \right\} \right)\right]$\\
$\displaystyle =\left[\sum_{i_0} \mathbb{P}\left( \left\{\omega: \big[JT_\omega^{i_0} (x(\omega)) \big]^{-1} = a, m_0(\omega)=i_0 \right\}\right)\right] \left[\sum_{i_1} \mathbb{P}\left(\left\{\omega:\big[JT_{\omega}^{i_1} (x(\omega)) \big]^{-1} = b, m_0(\omega)=i_1   \right\} \right)\right]$\\
$\displaystyle =\mathbb{P}\left( \left\{\omega: \big[JT_\omega^{m_0(\omega)} (x(\omega)) \big]^{-1} = a \right\}\right) \mathbb{P}\left(\left\{ \omega: \big[JT_{\omega}^{m_0(\omega)} (x(\omega)) \big]^{-1} = b   \right\} \right).$
\end{tabular}}
\end{equation*}
On the other hand 
\begin{equation*}
    \mathbb{P}\left(\left\{\omega: \big[ JT_{\theta^{m_0(\omega)}\omega}^{m_1(\omega)}(x(\theta^{m_0(\omega)} \omega)) \big]^{-1} =b \right\} \right)
\end{equation*}
\begin{equation*}
    = \sum_a \mathbb{P}\left(\left\{\omega: \big[ JT_{\omega}^{m_0(\omega)}(x(\omega)) \big]^{-1} =a, \big[ JT_{\theta^{m_0(\omega)}\omega}^{m_1(\omega)}(x(\theta^{m_0(\omega)} \omega)) \big]^{-1} =b \right\} \right)
\end{equation*}
\begin{equation*}
    = \sum_a \mathbb{P}\left( \left\{\omega: \big[JT_\omega^{m_0(\omega)} (x(\omega)) \big]^{-1} = a \right\}\right) \mathbb{P}\left(\left\{ \omega: \big[JT_{\omega}^{m_0(\omega)} (x(\omega)) \big]^{-1} = b   \right\} \right)
\end{equation*}
\begin{equation*}
    = \mathbb{P}\left(\left\{ \omega: \big[JT_{\omega}^{m_0(\omega)} (x(\omega)) \big]^{-1} = b   \right\} \right).
\end{equation*}So combining the two previous chains of equality, we find that $$JT_\cdot^{m_0(\cdot)}(x(\cdot)) \perp JT_{\theta^{m_0(\cdot)}\cdot}^{m_1(\cdot)}(x(\theta^{m_0(\cdot)}\cdot)),$$as desired.

II) Notice that 
\begin{equation*}
    \int_\Omega \big[ JT_{\theta^{m_0(\omega)}\omega}^{m_1(\omega)}x(\theta^{m_0(\omega)}\omega) \big]^{-1} d \mathbb{P} (\omega) = \sum_b b \mathbb{P}\left( \left\{\omega : JT_{\theta^{m_0(\omega)}\omega}^{m_1(\omega)}x(\theta^{m_0(\omega)}\omega) \big]^{-1} = b \right\}\right)
\end{equation*}
\begin{equation*}
    = \sum_b b  \mathbb{P}\left(\left\{ \omega: \big[JT_{\omega}^{m_0(\omega)} (x(\omega)) \big]^{-1} = b   \right\} \right) = \int_\Omega \big[ JT_{\omega}^{m_0(\omega)}x(\omega) \big]^{-1} d \mathbb{P} (\omega),
\end{equation*}where we have used the last equality in I).    
\end{proof}

\section{Acknowledgements}
LA \hspace{-0.25mm}was \hspace{-0.25mm}supported \hspace{-0.25mm}by \hspace{-0.25mm}Funda\c{c}\~ao \hspace{-0.25mm}para \hspace{-0.25mm}a \hspace{-0.25mm}Ci\^encia \hspace{-0.25mm}e \hspace{-0.25mm}a \hspace{-0.25mm}Tecnologia \hspace{-0.25mm}grant \hspace{-0.25mm}PD/BD/150458/2019
and also by the Faculty of Sciences of the University of Porto, the Center of Mathematics of the University of Porto, the Universit\'e de Toulon, France, and the Centre de Physique
Th\'eorique in Luminy, Marseille, France
NH was supported by the Simons Foundation grant
{\it Collaboration Grants for Mathematicians} and also by the
Universit\'e de Toulon, France, and the Centre de Physique
Th\'eorique in Luminy, Marseille, France. The research of SV was supported by the project Dynamics and Information Research Institute within the agreement between UniCredit Bank and Scuola Normale Superiore di Pisa and by the Laboratoire International Associé LIA LYSM, of the French CNRS and INdAM (Italy). SV was also supported by the project MATHAmSud TOMCAT 22-Math-10, N. 49958WH, du french CNRS and MEAE and by the Mathematical Research Institute MATRIX for hosting a workshop during which part of this work was conceived. The authors thank P. Varandas for useful comments on applications and J. Freitas for discussions on alternative interpretations of relationships between return and hitting statistical quantities.

\newpage
%\bibliographystyle{apalike}
%\bibliographystyle{plainnat} % use this to have URLs listed in References
%\bibliography{references.bib} % Path to your References.bib file

\end{document}